\let\C\relax
\newcommand{\ol}{\overline}
\newcommand{\ul}{\underline}
\newcommand{\C}{\mathbb{C}}
\newcommand{\Z}{\mathbb{Z}}
\newcommand{\Q}{\mathbb{Q}}
\newcommand{\Pbb}{\mathbb{P}}
\newcommand{\Fcal}{\mathcal{F}}
\newcommand{\Vcal}{\mathcal{V}}
\newcommand{\Ecal}{\mathcal{E}}
\newcommand{\Ocal}{\mathcal{O}}
\newcommand{\Bcal}{\mathcal{B}}
\newcommand{\Dcal}{\mathcal{D}}
\newcommand{\Hcal}{\mathcal{H}}
\newcommand{\Hbb}{\mathbb{H}}
\newcommand{\Hrm}{\mathrm{H}}
\newcommand{\Lrm}{\mathrm{L}}
\newcommand{\Hom}{\mathrm{Hom}}
\newcommand{\im}{\mathrm{Im}}
\newcommand{\st}{\mathrm{st}}
\newcommand{\gr}{\mathrm{gr}}
\newcommand{\cyc}{\mathrm{cyc}}
\newcommand{\desc}{\mathrm{desc}}
\newcommand{\Ind}{\mathrm{Ind}}
\newcommand{\lieb}{\mathfrak{b}}
\newcommand{\lien}{\mathfrak{n}}
\newcommand{\lieh}{\mathfrak{h}}
\newcommand{\lieg}{\mathfrak{g}}
\newcommand{\liea}{\mathfrak{a}}
\newcommand{\Xfrak}{\mathfrak{X}}
\newcommand{\Yfrak}{\mathfrak{Y}}
\newcommand{\Zfrak}{\mathfrak{Z}}
\newcommand{\Gfrak}{\mathfrak{G}}
\newcommand{\Cfrak}{\mathfrak{C}}
\newcommand{\Rfrak}{\mathfrak{R}}
\newcommand{\Nscr}{\mathscr{N}}
\newcommand{\hrm}{\mathrm{h}}
\newcommand{\ebf}{\mathbf{e}}
\newcommand{\hbf}{\mathbf{h}}
\newcommand{\Lbb}{\mathbb{L}}
\newcommand{\Nbb}{\mathbf{N}}
\newcommand{\Mbb}{\mathbf{M}}
\newcommand{\tr}{\mathrm{tr}}
\newcommand{\Ad}{\mathrm{Ad}}
\newcommand{\Sym}{\mathrm{Sym}}
\newcommand{\Cscr}{\mathscr{C}}
\newcommand{\Fbb}{\mathbb{F}}
\newcommand{\Drm}{\mathrm{D}}
\newcommand{\Qrm}{\mathrm{Q}}
\newcommand{\Arm}{\mathrm{A}}
\newcommand{\Pcal}{\mathcal{P}}
\newcommand{\Prm}{\mathrm{P}}
\newcommand{\Rrm}{\mathrm{R}}
\newcommand{\Stab}{\mathrm{Stab}}
\newcommand{\GL}{\mathrm{GL}}
\newcommand{\SL}{\mathrm{SL}}
\newcommand{\Lcal}{\mathcal{L}}
\newcommand{\gl}{\mathfrak{gl}}
\newcommand{\sln}{\mathfrak{sl}_n}
\newcommand{\Acal}{\mathcal{A}}
\newcommand{\wt}{\widetilde}
\newcommand{\e}{\mathrm{e}}
\newcommand{\ind}{\mathrm{ind}}
\newcommand{\HM}{\mathrm{HM}}
\newcommand{\Mod}{\mathrm{-mod}}
\newcommand{\Ker}{\mathrm{Ker}}
\newcommand{\ord}{\mathrm{ord}}
\newcommand{\SYT}{\mathrm{SYT}}
\newcommand{\ASYT}{\mathrm{ASYT}}
\newcommand{\Hilb}{\mathrm{Hilb}}
\newcommand{\FHilb}{\mathrm{FHilb}}
\newcommand{\IHilb}{\mathrm{IHilb}}
\newcommand{\vol}{\mathrm{vol}}
\newcommand{\red}{\mathrm{red}}
\newcommand{\ad}{\mathrm{ad }}
\newcommand{\Coh}{\mathrm{Coh}}
\newcommand{\ch}{\mathrm{ch}}
\newcommand{\triv}{\mathrm{triv}}
\theoremstyle{plain}
\newtheorem{thm}{Theorem}[section]
\newtheorem{defn}[thm]{Definition}
\newtheorem{prop}[thm]{Proposition}
\newtheorem{coro}[thm]{Corollary}
\newtheorem{lem}[thm]{Lemma}
\newtheorem{conj}[thm]{Conjecture}
\newtheorem{rem}[thm]{Remark}
\definecolor{color1}{HTML}{318CE7}
\definecolor{color2}{HTML}{F88379}
\newcommand{\addresses}{{% additional braces for segregating \footnotesize
		\bigskip
		\footnotesize
		
		\textsc{University of Chicago, 5734 S. University Avenue, Chicago, IL 60637}\par\nopagebreak
		\textit{E-mail address}: \texttt{xcma@uchicago.edu}
}}
\title{From Cherednik algebras to knot homology \\via cuspidal $\Dcal$-modules}
\author{Xinchun Ma}
\date{ }
\begin{document}
  \spacing{1.1}
	\maketitle
	\vspace{-40pt}
	\begin{abstract}
		We show that the triply-graded Khovanov-Rozansky homology of the torus knot $T_{m,n}$ can be recovered from the finite-dimensional representation $\Lrm_{\frac{m}{n}}$ of the rational Cherednik algebra at slope $\frac{m}{n}$, endowed with the Hodge filtration coming from the cuspidal character $\Dcal$-module $\Nbb_{\frac{m}{n}}$. Our approach involves expressing the associated graded of $\Nbb_{\frac{m}{n}}$ in terms of a dg module closely related to the action of the shuffle algebra on the equivariant K-theory of the Hilbert scheme of points on $\C^2$, thereby proving the rational master conjecture. As a corollary, we identify the Hodge filtration with the inductive and algebraic filtrations on $\Lrm_{\frac{m}{n}}$.
	\end{abstract}
	
	\tableofcontents
	\section{Introduction}
	
	\subsection{Rational Cherednik algebras and link homology}
	Recent years have seen extraordinary connections between seemingly unrelated mathematical objects across different fields, indexed by pairs of coprime natural numbers $m,n$. Topologically, there is the $(m,n)$-torus knot $T_{m,n}$, which winds $m$ times around a circle in the interior of the torus, and $n$ times around its axis of rotational symmetry. Surprisingly, the Khovanov-Rozansky homology (\cite{khovanovsoergel}) of $T_{m,n}$ can be captured by the following representation-theoretic object.
	
	Write $c=\frac{m}{n}$. Let $\lieh$ denote the $(n-1)$-dimensional standard representation of the symmetric group $S_n$. The rational Cherednik algebra  $\Hrm_c$ (also called rational DAHA) is a deformation of $\Dcal(\lieh)\ltimes S_n$ at the parameter $c$ where $\Dcal(\lieh)$ is the ring of differential operators on $\lieh$. The algebra $\Hrm_c$ has a unique finite-dimensional irreducible representation which we denote by $\Lrm_c$. Under the action of the Euler field $\hrm_c$, $\Lrm_c$ decomposes into a direct sum of eigenspaces $\bigoplus_{\ell\in \Z}\Lrm_c(\ell)$. 
	\begin{conj}\label{gorsconj}
		(\cite[Conjecture 1.2]{gors})
		There exists a filtration $F$ on $\Lrm_c$ compatible with the order filtration on $\Hrm_c$ and the $\hrm_c$-grading such that there is an isomorphism
		\begin{align*}	\Hrm\Hrm\Hrm(T_{m,n})\cong\Hom_{S_n}\big(\wedge^\bullet\lieh,\gr_\bullet^F\big(\oplus\Lrm_c(\bullet)\big)\big).
		\end{align*}
		of triply graded vector spaces such that the following gradings are matched
		\begin{align*}
			\text{ internal $q$-grading}&\leftrightarrow  \text{$\hrm_c$-grading}\\
			\text{Hochschild homological $a$-grading} &\leftrightarrow \text{wedge degree of $\wedge^\bullet \lieh$}\\
			\text{usual homological $t$-grading} &\leftrightarrow \text{filtration on $\Lrm_c$}
		\end{align*}
	\end{conj}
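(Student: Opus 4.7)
The plan is to construct the filtration $F$ as the Hodge filtration coming from a cuspidal character $\Dcal$-module $\Nbb_{m/n}$ on a Lie-theoretic space, and then compute its associated graded explicitly in terms of the geometry of the Hilbert scheme $\Hilb^n(\C^2)$. The representation $\Lrm_c$ should appear as a suitable fiber or Harish-Chandra pushforward of $\Nbb_{m/n}$, so that a good filtration on $\Nbb_{m/n}$ descends to one on $\Lrm_c$. Since Saito's Hodge filtration on a polarizable Hodge module is by definition good with respect to the order filtration on $\Dcal$, the induced filtration on $\Lrm_c$ is automatically compatible with the order filtration on $\Hrm_c$. Compatibility with the Euler $\hrm_c$-grading will come from the $\C^\times$-equivariance of the whole construction.

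The heart of the argument is to identify $\gr^F \Nbb_{m/n}$ with a differential-graded module naturally built from the shuffle-algebra action on the equivariant $K$-theory of $\Hilb^n(\C^2)$; this is the rational master conjecture highlighted in the abstract. Concretely, I would write down a Koszul-type complex whose terms are indexed by torus-fixed points of $\Hilb^n$ (equivalently, by partitions of $n$) and whose differential is read off from the shuffle product, and then prove that this complex computes $\gr^F \Nbb_{m/n}$. The identification should be established first on a generic non-cuspidal locus, where both sides are explicit modules over an affine ring, and then extended to the cuspidal stratum by matching bigraded Frobenius characters via a character-formula computation on the Cherednik side.

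Once the rational master conjecture is in place, the knot-homology statement reduces to the geometric description of $\Hrm\Hrm\Hrm(T_{m,n})$ as a $\Hom$-space on the Hilbert scheme, in the spirit of the Gorsky--Negut--Rasmussen and Oblomkov--Rozansky frameworks. Applying $\Hom_{S_n}(\wedge^\bullet \lieh, -)$, which passes between the Cherednik side and the Hilbert-scheme side through the Procesi bundle, then converts the identification of $\gr^F \Nbb_{m/n}$ into the desired isomorphism. The grading matching should fall out by tracking three natural weights through the construction: the $\hrm_c$-grading matches the internal $q$-grading via equivariant weights, the exterior degree on $\lieh$ matches the Hochschild $a$-grading by the usual Koszul duality between the polynomial ring and exterior algebra on $\lieh$, and the filtration index becomes the homological $t$-grading by construction of $F$.

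The principal obstacle is the explicit computation of $\gr^F \Nbb_{m/n}$. Saito's Hodge filtration on a cuspidal character $\Dcal$-module is not accessible by generic methods: it is defined via microlocal nearby-cycle constructions that do not commute with taking associated graded in a naive way, and cuspidal sheaves are themselves built by intricate Harish-Chandra-type inductions. I expect the bulk of the work to lie in constructing the Koszul--shuffle resolution, proving exactness at the deepest cuspidal stratum (exactness on the complement is comparatively formal), and reconciling this geometric presentation with the Hodge-theoretic definition of $\gr^F$. This reconciliation will likely require new Hodge-theoretic input specific to the cuspidal case, beyond what is classically available.
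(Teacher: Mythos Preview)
Your overall strategy matches the paper's: define $F$ as the Hodge filtration on $\Lrm_c$ induced from the cuspidal character $\Dcal$-module $\Nbb_c$, compute $\gr^F$ via Hilbert-scheme geometry, and then invoke the known computation of $\Hrm\Hrm\Hrm(T_{m,n})$. The differences lie in how each of these steps is actually executed.

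For the Hodge filtration, the paper does \emph{not} approach $\gr^F\Nbb_c$ through microlocal nearby cycles or by first working on a non-cuspidal locus and extending. Instead it inducts from the Borel: on $\lien$ the local system $\Fcal_c|_{\lien_r}$ extends across a simple normal crossing divisor, so Deligne's canonical extension gives the Hodge filtration explicitly as $(F^{\ord}_k\Dcal_\lien)\cdot\lceil s_0^c\rceil^{-1}$. Equivariant induction (a purely formal compatibility of $\Ind_B^G$ with $\wt{\gr}$) transports this to $\Mbb_c$ on $\Bcal\times\lieg$, and Laumon's theorem on $\gr$ of proper pushforwards of Hodge modules gives $\wt{\gr}^\Hrm\Nbb_c$ as $Rp_*$ of an explicit line-bundle twist of a Koszul complex on $G\times^B(\lien\times\Xfrak)$. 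This is the cuspidal dg module $\Acal_c$. So the Koszul complex lives on the Springer side, not on $\Hilb^n$; its terms are wedge powers of $\ul{\lieb^*}$ (or $\ul{\lien^*}$), not indexed by torus fixed points.

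The Hilbert-scheme and shuffle-algebra content then enters only through a character computation: equivariant localization at the principal nilpotent pairs $\ebf_\lambda$ yields a sum over (almost) standard Young tableaux (Propositions \ref{characterofA'} and \ref{character_of_A}), and a direct combinatorial identity (Proposition \ref{prop_P_A}) matches this with Negu\c{t}'s formula for $P_{m,n}\cdot 1$ in $K^A(\Hilb^n)$. No new exactness statement or stratum-by-stratum argument is needed; the dg module is automatically concentrated in one degree because it is the associated graded of a genuine $\Dcal$-module.

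Finally, the bridge to $\Hrm\Hrm\Hrm$ is not the Gorsky--Negu\c{t}--Rasmussen or Oblomkov--Rozansky framework you cite, but Mellit's explicit recursive computation of torus-knot homology (Theorem \ref{thm_mellit}), which already expresses $\ch_{a,q,t}\Hrm\Hrm\Hrm(T_{m,n})$ as the matrix coefficient $\langle\Lambda(\Vcal_{\st},a)\,|\,P_{m,n}\,|\,1\rangle$. So the knot-homology input is a theorem, not a conjectural sheaf-theoretic description, and the argument is a numerical match of triply graded characters rather than a categorical equivalence.
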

	
	In this paper for any coprime $(m,n)$, $m>n$ we will define a Hodge filtration $F^\Hrm$ on $\Lrm_c$ and show that 
	\begin{thm}\label{thm_gors}(Corollary \ref{coro_HHH_Lc})  Conjecture \ref{gorsconj} holds with respect to the Hodge filtration $F^\Hrm$ when $m>n$ such that $(m,n)=1$.
	\end{thm}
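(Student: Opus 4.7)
The theorem follows by combining three ingredients: (i) the construction of the Hodge filtration $F^\Hrm$ on $\Lrm_c$ inherited from the cuspidal character $\Dcal$-module $\Nbb_c$; (ii) the rational master conjecture, which computes the associated graded $\gr^F\Nbb_c$ as a dg module built from the shuffle algebra action on $K^{\Tbb}(\Hilb^n(\C^2))$; and (iii) an identification of this dg module with an already-known geometric model for the triply-graded Khovanov-Rozansky homology $\Hrm\Hrm\Hrm(T_{m,n})$. I now explain how I intend to establish each ingredient.

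For ingredient (i), I would realize $\Lrm_c$ as a canonical subquotient of the global sections of $\Nbb_c$ and transfer the Hodge filtration via the Saito mixed Hodge module formalism to obtain $F^\Hrm$ on $\Lrm_c$. The $\C^\times$-action producing the $\hrm_c$-eigenspace decomposition commutes with the Hodge filtration, and compatibility with the order filtration on $\Hrm_c$ follows from the compatibility of the Hodge filtration with the symbol filtration on $\Dcal(\lieh)$. Once $\Nbb_c$ is correctly set up, this step is essentially formal, modulo checking that no Hodge jumps are lost in the reduction to the finite-dimensional quotient.

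For ingredient (ii), the objective is to identify $\gr^F\Nbb_c$, as a graded module over the symbol algebra, with an explicit complex whose terms are pushforwards of equivariant line bundles on $\Hilb^n(\C^2)$ and whose differentials are assembled from shuffle algebra generators of slope $m/n$. My strategy would be to construct a Rees-type resolution of $\Nbb_c$ whose symbols assemble into the predicted complex, leveraging the Hecke correspondences on nested Hilbert schemes to realize the shuffle action geometrically. This is the central computation of the paper and is what drives everything downstream.

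Ingredient (iii), which I expect to be the main obstacle, is to match the output of (ii) with a known presentation of $\Hrm\Hrm\Hrm(T_{m,n})$ -- for instance the Oblomkov-Rozansky matrix factorization model or the Gorsky-Negut-Rasmussen flag Hilbert scheme model -- while verifying that all three gradings line up: the $\hrm_c$-grading with the internal $q$-grading, the exterior degree on $\wedge^\bullet\lieh$ with the Hochschild $a$-grading, and the Hodge filtration $F^\Hrm$ with the homological $t$-grading. The $t$-matching is the most delicate point; I would address it by showing directly that $F^\Hrm$ coincides with the inductive filtration already known to carry the correct $t$-grading, or alternatively by reading off the $t$-grading on $\Hrm\Hrm\Hrm(T_{m,n})$ from the homological degree in the shuffle-theoretic model constructed in (ii). Theorem \ref{thm_gors} would then follow as a formal corollary of the rational master conjecture together with this matching of gradings, and the identification of $F^\Hrm$ with the inductive and algebraic filtrations would emerge as a by-product.
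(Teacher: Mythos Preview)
Your ingredients (i) and (ii) are essentially what the paper does: the Hodge filtration is transported from $\Nbb_c$ via Saito's formalism and the CEE functor, and $\gr^\Hrm\Nbb_c$ is computed as the pushforward of an explicit ``cuspidal'' dg module $\Acal_c$ on $G\times^B(\lien\times\Xfrak)$ (Proposition~\ref{prop_grN_as_pushforward}). So far so good.

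The gap is in ingredient (iii). The paper does \emph{not} attempt a sheaf-level or dg-level match with an Oblomkov--Rozansky or GNR model; indeed the sheaf-level identification of the cuspidal and Catalan dg modules is left open as Conjecture~\ref{conj_A_A'}. Instead the paper works entirely at the level of equivariant $K$-theory characters. Using localization at torus fixed points (Propositions~\ref{characterofA'} and \ref{prop_P_A}) one shows that the $A\times S_n$-character of $\gr^\Hrm\Lrm_c$ equals $(P_{m,n}\cdot 1)(q,q^{-1}t)$ (Theorem~\ref{thm_Lc_Pmn}). The bridge to knot homology is then Mellit's theorem \cite{mellitknothomology}, quoted as Theorem~\ref{thm_mellit}, which independently computes $\chi_{a,q,t}(\Hrm\Hrm\Hrm(T_{m,n}))$ as the matrix coefficient $\langle \Lambda(\Vcal_{\st},a)\mid P_{m,n}\mid 1\rangle$. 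Since both sides of \eqref{triply_graded} are merely triply-graded vector spaces, equality of characters is equality of objects, and the $a$-grading match comes for free from $\Hom_{S_n}(\wedge^k\lieh,\Pcal)\cong\wedge^k\Vcal_{\st}$. You never identified this external input; without Mellit's formula you have no handle on the $t$-grading of $\Hrm\Hrm\Hrm$.

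Your two fallback strategies for the $t$-grading both fail. Matching sheaves with the GNR model would require the very conjectures (e.g.\ \cite{GNR}) that are still open. And appealing to the inductive filtration is circular: prior to this paper the inductive filtration was \emph{not} known to satisfy Conjecture~\ref{gorsconj}---that is precisely what is being proved. In the paper's logic the flow is reversed: Corollary~\ref{coro_HHH_Lc} comes first, then the $m\leftrightarrow n$ symmetry of $\Hrm\Hrm\Hrm$ yields compatibility of $F^\Hrm$ with the flip $\e\Lrm_{m/n}\cong\e\Lrm_{n/m}$ (Corollary~\ref{hodge_compatible_flip}), and only then is $F^\Hrm=F^{\ind}$ deduced (Proposition~\ref{prop_ind_Hod}).
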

	The Hodge filtration is defined on $\Lrm_c^{S_n}$ for all $m>0$, in which cases we also have that Conjecture \ref{gorsconj} holds for the degree of $a=0$.
	
	Building upon the framework established by Calaque-Enrique-Etingof \cite{cee}, we define $F^\Hrm$ on $\Lrm_c$ by viewing $\Lrm_c^{S_n}$ as the $\Sym^m\C^n$-$\sln$-isotypic component in the cuspidal character $\Dcal(\sln)$-module $\Nbb_c$ following Saito's theory. Using the Springer resolution and Laumon's result on pushforward of filtered $\Dcal$-modules, we are able to write the associated graded of $\Nbb_c$ in terms of an explicit dg module, which we call the cuspidal dg module (for further discussion, see \ref{subsection_dg_scheme}). The cuspidal dg module is closely related to the nilpotent commuting variety, which has the punctual Hilbert scheme of points in $\C^2$ as a GIT quotient. In this sense, the appearance of the equivariant K-theory of the Hilbert scheme  of points on $\C^2$ that we will discuss below is natural.
	\subsection{Hilbert schemes and the rational master conjecture}
	Another avatar of the theme of two coprime integers comes from the spherical DAHA. In \cite{Burban-Schiffmann}, Burban and Schiffmann show that the spherical DAHA can be generated by elements $P_{m,n}$ labeled by pairs of integers. Using the generator $P_{m,n}$, Cherednik \cite{Cherednik_superpoly} reconstructs the refined Chern–Simons knot invariant in the sense of Aganagic-Shakirov (\cite{Aganagic-Shakirov}) of the $m,n$-torus knot in the form of the three-variable torus knot superpolynomial.
	
	The generator $P_{m,n}$ is also closely related to the finite-dimensional representation $\Lrm_{\frac{m}{n}}$ via the geometry of the Hilbert scheme. In \cite{SV_Hilbert}, Schiffmann and Vasserot define an action of the spherical DAHA on $K^{\C^*\times\C^*}(\Hilb^n(\C^2))$, the equivariant K-theory of the Hilbert scheme of points on $\C^2$. 
	In \cite{gn}, Gorsky and Negu\c{t} propose that the bigraded Frobenius character of $\Lrm_c$ can be read off from the action of $P_{m,n}$ on $K^{\C^*\times\C^*}(\Hilb^n(\C^2))$. In this paper, we settle their conjecture:
	\begin{thm}\label{gnconj}(Theorem \ref{thm_Lc_Pmn})
		The bigraded Frobenius character of $\Lrm_c$ with respect to the Hodge filtration and the Euler field is given by
		\begin{align*}
			\ch_{S_n\times\C^*\times\C^*}(\Lrm_c)=(P_{m,n}\cdot 1)(q,q^{-1}t)
		\end{align*}
	\end{thm}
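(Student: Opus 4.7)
The plan is to realize the character computation geometrically via the cuspidal $\Dcal$-module $\Nbb_c$ and then transport it to the equivariant K-theory of $\Hilb^n(\C^2)$, where the operator $P_{m,n}$ lives. First I would invoke the Calaque-Enrique-Etingof construction (already used in the proof of Theorem \ref{thm_gors}) to identify $\Lrm_c$, together with its $S_n$-action and $\hrm_c$-grading, with a specific collection of $\GL_n$-isotypic components inside $\Nbb_c$; under this identification the Hodge filtration $F^\Hrm$ on $\Nbb_c$ restricts to the one on $\Lrm_c$, so computing the bigraded Frobenius character of $\Lrm_c$ reduces to computing the Hodge-filtered equivariant character of $\Nbb_c$ in the relevant $\GL_n$-weight.

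Second, I would compute $\gr^{F^\Hrm}\Nbb_c$ explicitly. The cuspidal character $\Dcal$-module is constructed as a pushforward along an Eisenstein-type map, so Laumon's theorem for the associated graded of filtered pushforwards presents $\gr^{F^\Hrm}\Nbb_c$ as an explicit dg module supported on the nilpotent commuting variety of $\sln$; this is precisely the cuspidal dg module whose identification constitutes the rational master conjecture advertised in the introduction. The graded Frobenius character of the relevant isotypic component is then an equivariant Euler characteristic on this commuting variety.

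Third, I would transport the computation to the Hilbert scheme. The nilpotent commuting variety has $\Hilb^n(\C^2)$ as a GIT quotient by $\GL_n$, and under the induced map on equivariant K-theory the relevant piece of the cuspidal dg module corresponds to a distinguished tautological class. Combining this with the Schiffmann-Vasserot action of the spherical DAHA on $K^{\C^*\times\C^*}(\Hilb^n(\C^2))$ and the shuffle-algebra description of $P_{m,n}$ due to Burban-Schiffmann and Negu\c{t}, the class is exhibited as $P_{m,n}\cdot 1$. I would then match the bigradings so that the Euler-field weight corresponds to the variable $q$ while the Hodge-filtration degree corresponds to $q^{-1}t$, the extra factor of $q^{-1}$ coming from the Koszul shift inherent in Laumon's formula.

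The main obstacle is the second step: Laumon's formula only produces a clean description of $\gr^{F^\Hrm}\Nbb_c$ once one carefully controls how the Hodge filtration interacts with the Springer-type resolution used to construct $\Nbb_c$, and casting the resulting dg module in a form compatible with the shuffle algebra requires delicate bookkeeping with the equivariant structure and with the Koszul differentials on the cuspidal dg module. Once this identification is in place, the matching with $(P_{m,n}\cdot 1)(q,q^{-1}t)$ is a direct translation between the two sides.
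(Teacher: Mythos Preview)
Your outline follows the same overall route as the paper: CEE identification of $\Lrm_c$ with isotypic components of $\Nbb_c$, Laumon's theorem to compute $\gr^{F^\Hrm}\Nbb_c$ as a cuspidal dg module on the nilpotent commuting variety, and descent to $\Hilb^n$ to compare with $P_{m,n}\cdot 1$.  Two substantive steps are, however, elided in your sketch.

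First, the passage from the dg module on the commuting variety to a sheaf on $\Hilb^n$ is not just GIT descent: the paper uses the Gordon--Stafford $\Z$-algebra functor $GS$, and for this to agree with the na\"ive descent $\Psi_c$ one must verify that the Hodge filtration is compatible with the shift functors ${}_{c+k}\Prm_c\otimes_{\Arm_c}(-)$ (Proposition~\ref{hodge_compatible_shift}).  This in turn requires a dimension count showing that taking $\tau_{-c}(\ol{\lieg})$-invariants over $T^*\Gfrak$ versus over the semistable locus $\wt{\Hilb^n}$ gives the same answer (Lemma~\ref{lemma_grN_Hilb_T*g}), which already uses the identification with $P_{m,n}\cdot 1$ at the level of dimensions.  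Relatedly, CEE only recovers $\e\Lrm_c$; to obtain the full $S_n$-module $\Lrm_c$ one tensors with the Procesi bundle via the identification $\Lrm_c\cong\Hrm_c\delta\e_-\otimes_{\Arm_{c-1}}\e\Lrm_{c-1}$.

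Second, ``the class is exhibited as $P_{m,n}\cdot 1$'' hides a genuine computation: the paper evaluates the stalks of $Rp_*\Acal_c$ at the principal nilpotent pairs $\ebf_\lambda$ by equivariant localization, producing a sum over almost standard Young tableaux (Proposition~\ref{characterofA'}), and then matches this termwise to Negu\c{t}'s residue formula for $P_{m,n}\cdot 1$ via a combinatorial identity (Proposition~\ref{prop_P_A}).  Finally, the substitution $(q,t)\mapsto(q,q^{-1}t)$ is not a Koszul shift from Laumon's formula; it arises because the Euler field $\hrm_c$ acts with weight $(1,-1)$ under the natural $\C^*\times\C^*$-action on $\Hilb^n$, so one of the torus variables must be adjusted when translating between the geometric bigrading and the $(\hrm_c,F^\Hrm)$-bigrading.
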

Theorem \ref{gnconj} is related to Theorem \ref{thm_gors} as follows. The computation of the Khovanov-Rozansky homology for torus links had long been a challenging open problem, which was finally addressed by Elias, Hogancamp, and others (\cite{Hogancamp_HHH, Elias-Hogancamp, Hogancamp_HHH2}) through recursive methods. The culmination of this work is a shuffle conjecture style formula \cite{mellitknothomology}. From this formula, it follows that the Euler characteristic of $\mathrm{HHH}(T_{m,n})$ equals the knot superpolynomial of $T_{m,n}$, interpreted as a certain matrix coefficient of the action of $P_{m,n}$ on $K^{\mathbb{C}^* \times \mathbb{C}^*}(\Hilb^n(\mathbb{C}^2))$, as observed in \cite{gn}. Therefore, Theorem \ref{gnconj} implies Theorem \ref{thm_gors}.
	
	It is worth noting that in the case of $m=n+1$, $\Lrm_{\frac{n+1}{n}}$ is isomorphic to the space of diagonal harmonics \cite{Gordon_diagonal_harmonics} and $P_{n+1,n}\cdot 1$ can be identified with $\nabla e_n$ \cite[Corollary 6.5]{gn}, where $\nabla$ is the nabla operator (\cite{science_fiction}) and $e_n$ is the elementary symmetric polynomial of degree $n$. Therefore, Theorem \ref{gnconj} may be viewed as a generalization of the master conjecture proved by Haiman \cite[Theorem 4.2.5]{haiman_survey}.
	
	\subsection{DG punctual flag Hilbert schemes and the work of Ginzburg}\label{subsection_dg_scheme}
	Notably, as observed by Negu\c{t} in \cite{negutflag}, the action of the shuffle generators $P_{m,n}$ on $K^{\C^*\times\C^*}(\Hilb^n(\C^2))$ is not visible using the classical Nakajima correspondence, and the notion of a flag Hilbert scheme turns out to be necessary. Indeed, there has been some intriguing open conjectures relating the coherent category of the flag Hilbert scheme and the homotopy category of Soergel bimodules (\cite{GNR}). However, it is well-known that the naive flag Hilbert scheme is highly singular. A remedy is to rather use a dg scheme: the dg flag Hilbert scheme $\FHilb_{\mathrm{dg}}^n(\C^2)$ is defined independently in \cite {isospec} and \cite{GNR}. In spite of its derived nature, $\FHilb_{\mathrm{dg}}^n(\C^2)$ is by definition a local complete intersection. 
	
	In \cite{isospec}, Ginzburg studies the isospectral commuting variety by expressing the associated graded of the Harish-Chandra $\Dcal$-module with respect to the Hodge filtration in terms of the dg flag commuting variety. In type $A$, the isospectral commuting variety has the isospectral Hilbert scheme as a GIT quotient and the dg flag commuting variety has the dg flag Hilbert scheme as a GIT quotient. Therefore, the result in \emph{loc. cit.} implies that the pushforward of the structure sheaf of $\FHilb_{\mathrm{dg}}^n(\C^2)$ to $\Hilb^n(\C^2)$ is the Procesi bundle. Using the main result of \emph{loc.cit.} plus examination into the singular support of the Harish-Chandra $\Dcal$-module, Gordon gives an alternative proof of the Macdonald positivity conjecture (\cite{Gordon_Macdonald}).
	
	We adapt a very similar approach as \cite{isospec} in the setting of nontrivial monodromy and naturally obtain what we call the cuspidal dg module. We also define a Catalan dg module, whose GIT quotient specializes to the dg punctual flag Hilbert scheme defined in \cite{GNR}. We show the pushforwards of these two dg modules to the commuting variety correspond to the same equivariant K-theory classes and we state the sheaf-theoretic identification as a conjecture  (Conjecture \ref{conj_A_A'}). While this paper was under editing, Gorsky and Negu\c{t} generously shared a copy of their preprint \cite{gn_commuting_idempotent}. Their Conjecture 2.2 is a non-derived version of our Conjecture \ref{conj_A_A'}.

	\subsection{Filtrations and future directions}
	Regarding Conjecture \ref{gorsconj}, the authors of \cite{gors} have proposed several filtrations: the algebraic(Chern) filtration, the inductive filtration and the geometric(perverse) filtration, which are shown to coincide (\cite{oy}, \cite{ma}).
	As a corollary of Theorem \ref{thm_gors}, we show that
	\begin{prop}(Proposition \ref{prop_ind_Hod})\label{prop_hod=alg}
		The Hodge filtration equals the inductive and algebraic filtrations on $\Lrm_c$ when $m>n$ for coprime $m,n$ and on $\Lrm_c^{S_n}$ for all $m>0$ coprime to $n$.
	\end{prop}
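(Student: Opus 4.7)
The strategy is to combine the coincidence $F^{\alg}=F^{\ind}$ of the algebraic and inductive filtrations already established in \cite{oy,ma} with the content of Theorem \ref{thm_gors}. It then suffices to identify the Hodge filtration with, say, the algebraic filtration, since the identification with the inductive filtration is then automatic.

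\textbf{Step one (numerical comparison).} Theorem \ref{thm_gors} gives an isomorphism of triply graded vector spaces between $\Hrm\Hrm\Hrm(T_{m,n})$ and $\Hom_{S_n}\big(\wedge^\bullet\lieh,\gr_\bullet^{F^{\Hrm}}\Lrm_c\big)$, in which the filtration index is matched with the homological $t$-grading. The same statement with the algebraic filtration in place of $F^{\Hrm}$ follows by combining \cite{oy,ma} with the shuffle-type formula for torus knot homology in \cite{mellitknothomology}. Comparing the two gives equality of graded dimensions
\[
\dim\gr_k^{F^{\Hrm}}\Lrm_c(\ell)\;=\;\dim\gr_k^{F^{\alg}}\Lrm_c(\ell) \qquad \text{for all } k,\ell.
\]

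\textbf{Step two (one-sided inclusion).} The algebraic filtration on $\Lrm_c$ is generated by applying tautological Chern-class operators to a distinguished cyclic vector $v$; via Theorem \ref{gnconj}, these operators can be traced back to the shuffle algebra action on $K^{\C^*\times\C^*}(\Hilb^n(\C^2))$, where $P_{m,n}\cdot 1$ produces the bigraded character of $\Lrm_c$. I would show that $v\in F_0^{\Hrm}\Lrm_c$ and that each tautological Chern-class operator lifts to the cuspidal $\Dcal$-module $\Nbb_c$ as an endomorphism shifting the Hodge filtration by the expected amount. Granting this, $F_k^{\alg}\subseteq F_k^{\Hrm}$ for all $k$, and equality then follows from step one.

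The hard part is verifying the filtration-shift property in step two. Concretely, one must trace through the description of $\gr^{F^{\Hrm}}\Nbb_c$ as a pushforward from the cuspidal dg module of \S\ref{subsection_dg_scheme} and identify the tautological Chern-class operators, whose geometric avatars live on $\FHilb_{\mathrm{dg}}^n(\C^2)$, with endomorphisms of the filtered $\Dcal$-module $\Nbb_c$ carrying the correct Hodge degree. This is exactly the compatibility needed to promote the dimension matching of step one into an actual equality of filtrations. Once it is in place, the $\Lrm_c^{S_n}$ case for arbitrary $m>0$ coprime to $n$ is handled by restricting the same argument to the $a=0$ slice, where Theorem \ref{thm_gors} is available unconditionally.
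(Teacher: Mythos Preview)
Your Step~2 is not a proof but a programme, and you say so yourself (``I would show\dots Granting this\dots''). Lifting the Chern-class operators from the algebraic filtration to endomorphisms of $\Nbb_c$ with controlled Hodge degree is a nontrivial claim that you do not verify; without it, the numerical equality of Step~1 cannot be upgraded to an inclusion, and the argument does not close. There is also a potential circularity: in the paper's logic, the statement that $\Hrm\Hrm\Hrm(T_{m,n})$ matches $\gr^{F^{\alg}}\Lrm_c$ (Proposition~\ref{prop_alg}) is deduced \emph{from} the result you are trying to prove, so invoking it in Step~1 requires an independent source for that fact.

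The paper's argument is entirely different and avoids both problems. The inductive filtration is \emph{defined} by a recursion on $(\Q_{>0},\prec)$ using two moves: the shift $\e\Lrm_{c-1}\mapsto\e\Lrm_c$ via ${}_{c}\Prm_{c-1}$, and the flip $\e\Lrm_{m/n}\cong\e\Lrm_{n/m}$. The paper shows that the Hodge filtration is compatible with both moves (Proposition~\ref{hodge_compatible_shift} for the shift, Corollary~\ref{hodge_compatible_flip} for the flip) and agrees at the base case $\e\Lrm_{1/n}$, which is one-dimensional. Hence $F^{\Hrm}=F^{\ind}$ by induction along $\prec$; the identification with $F^{\alg}$ then comes from \cite{ma}. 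Note that compatibility with the flip is itself deduced from the $m\leftrightarrow n$ symmetry of $\Hrm\Hrm\Hrm(T_{m,n})$ via Corollary~\ref{coro_HHH_Lc}, so Theorem~\ref{thm_gors} is used, but only for the Hodge filtration, not for $F^{\alg}$. This recursive approach never needs to produce an explicit inclusion $F^{\alg}\subseteq F^{\Hrm}$.
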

	The inductive filtration is also only defined on $\Lrm_c$ for $m>n$, but the algebraic filtration is well-defined on $\Lrm_c$ for all $m>0$ with $(m,n)=1$. As a corollary of  Proposition \ref{prop_hod=alg}, we also have that     \begin{prop}(Proposition \ref{prop_alg})
		For all integers $m>0$ coprime to $n$, with respect to the algebraic filtration,  Conjecture \ref{gorsconj} holds.	      \end{prop}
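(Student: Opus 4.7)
The plan is to deduce Proposition~\ref{prop_alg} from Proposition~\ref{prop_hod=alg}, Theorem~\ref{thm_gors}, and the topological symmetry $T_{m,n}=T_{n,m}$, treating the two ranges $m>n$ and $0<m<n$ separately.

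For $m>n$ coprime, Proposition~\ref{prop_hod=alg} identifies the algebraic filtration $F^{\alg}$ with the Hodge filtration $F^{\Hrm}$ on all of $\Lrm_c$, while Theorem~\ref{thm_gors} establishes Conjecture~\ref{gorsconj} for $F^{\Hrm}$. Transporting the isomorphism along $F^{\Hrm}=F^{\alg}$ yields Conjecture~\ref{gorsconj} for $F^{\alg}$ in this range.

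For $0<m<n$ coprime, the Hodge filtration is only available on $\Lrm_c^{S_n}$, so an additional ingredient is required. I would invoke the classical topological symmetry $T_{m,n}\cong T_{n,m}$, giving $\Hrm\Hrm\Hrm(T_{m,n})\cong\Hrm\Hrm\Hrm(T_{n,m})$, and apply the previous case to the swapped pair $(n,m)$ (where $n>m$) to obtain
\[
\Hrm\Hrm\Hrm(T_{m,n})\cong\Hom_{S_m}\bigl(\wedge^{\bullet}\lieh',\,\gr^{F^{\alg}}\Lrm_{n/m}\bigr),
\]
where $\lieh'$ denotes the $(m-1)$-dimensional standard representation of $S_m$. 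It remains to match this with $\Hom_{S_n}(\wedge^{\bullet}\lieh,\gr^{F^{\alg}}\Lrm_{m/n})$. This reduces, via Theorem~\ref{gnconj} applied in both directions, to the $(m,n)\leftrightarrow(n,m)$ symmetry $P_{m,n}=\pm P_{n,m}$ of the shuffle generators in the spherical DAHA (Burban--Schiffmann), since the triply-graded vector spaces on both sides are determined by the corresponding matrix coefficients of these generators on the equivariant K-theory of the Hilbert schemes.

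The main obstacle is lifting this character-level $(m,n)\leftrightarrow(n,m)$ symmetry to an honest isomorphism of triply-graded vector spaces preserving both the algebraic filtration and the wedge/Hochschild degree simultaneously. In practice one expects to construct the required intertwiner from the dg cuspidal and Catalan modules introduced earlier in the paper, exploiting the fact (Conjecture~\ref{conj_A_A'}) that their pushforwards to the commuting variety represent the same K-theory class; the $a=0$ piece, already handled directly by Proposition~\ref{prop_hod=alg} applied to the invariants, serves as a consistency check for the matching.
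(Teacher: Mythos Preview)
Your treatment of the range $m>n$ is correct and matches the paper exactly: identify $F^{\alg}$ with $F^{\Hrm}$ via Proposition~\ref{prop_hod=alg} (equivalently Proposition~\ref{prop_ind_Hod} together with $F^{\ind}=F^{\alg}$ from \cite{ma}) and then invoke Theorem~\ref{thm_gors}.

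For $0<m<n$ you correctly identify that what is needed is the symmetry
\[
\Hom_{S_n}\bigl(\wedge^{\bullet}\lieh,\gr^{F^{\alg}}\Lrm_{m/n}\bigr)\;\cong\;\Hom_{S_m}\bigl(\wedge^{\bullet}\lieh',\gr^{F^{\alg}}\Lrm_{n/m}\bigr),
\]
but your route to it has a genuine gap. You propose to reduce to a character identity via Theorem~\ref{gnconj} ``applied in both directions''; however, Theorem~\ref{gnconj} computes the Frobenius character of $\Lrm_c$ with respect to the \emph{Hodge} filtration, and for $m<n$ the Hodge filtration is only defined on $\Lrm_c^{S_n}$, with $F^{\Hrm}=F^{\alg}$ established only on that invariant part (Proposition~\ref{prop_hod=alg}). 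Hence you cannot access the higher wedge pieces $k>0$ of $\gr^{F^{\alg}}\Lrm_{m/n}$ this way. (Your fallback to Conjecture~\ref{conj_A_A'} is also not admissible, as that statement is left open in the paper.) Separately, the phrase ``$P_{m,n}=\pm P_{n,m}$'' is not meaningful as written: these shuffle elements live in different graded pieces and act between different Hilbert schemes, so no such literal equality holds.

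The paper closes this gap by a different, external input: it cites \cite[Corollary~1.1]{gorskyarc} for an isomorphism
\[
\Hom_{S_n}(\wedge^{k}\C^{n-1},\Lrm_{m/n})\;\cong\;\Hom_{S_m}(\wedge^{k}\C^{m-1},\Lrm_{n/m}),
\]
realized via differential forms on the arc space of the plane curve singularity $x^{m}=y^{n}$, and then uses \cite[Proposition~1.5]{gors} to see that this isomorphism is bigraded with respect to the algebraic filtration and the Euler grading. This gives the required $(m,n)\leftrightarrow(n,m)$ symmetry of the right-hand side directly, without passing through $F^{\Hrm}$ or any K-theoretic character computation.
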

	There are two natural generalizations of our setting: replacing $\Hilb^n(\C^2)$ by Gieseker varieties and allowing $m,n$ to be non-coprime.  The former is related to the study of representations of a quantized Gieseker moduli algebra (\cite{Etingof-Krylov-Losev-Simental}) and the study of higher Catalan numbers and a finite Shuffle conjecture (\cite{Gonzalez-Simental-Vazirani}).  The latter is related to the study of rational DAHA representations of minimal support and torus link homology (\cite{Etingof-Gorsky-Losev}). Despite the current absense of definitions of the inductive, algebraic and geometric filtrations in these settings, we believe that a notion of Hodge filtration is still available, and via a similar method, bigraded Frobenius characters can be computed and related to the corresponding link invariants. On the other hand, it is conjectured that the stable envelopes on $\Hilb^n(\mathbb{C}^2)$ are closely related to the DAHA Verma modules \cite[Conjecture 6.5]{gn_stable_basis}. In particular, suitable filtrations on Verma modules, satisfying properties such as compatibility with the parabolic induction and restriction functors, are necessary for such a program. We believe the Hodge filtration might be the correct candidate; however, it is neither clear nor has it been explored which $\mathcal{D}$-modules correspond to the Verma modules. We plan to investigate these directions in future work.\\
	\vspace{0pt}\\
	\textbf{Acknowledgement}: I would like to express my deepest gratitude to my advisor, Victor Ginzburg, for suggesting this problem. His previous work forms the foundation of almost all the arguments presented here, and this paper would not have been possible without his guidance and inspiration. I am also very grateful to Andrei Negu\c{t}, especially for pointing out the reference \cite[(2.34) and (2.35)]{negutwalgebra}, which plays an important role in the proofs in Section \ref{section_shuffle}. Additionally, I would like to thank Eugene Gorsky for valuable discussions and for giving me the opportunities to present this work while it was still in progress. My genuine thanks also go to Oscar Kivinen and Minh-Tam Trinh for helpful discussions in the early stage of this project.%, and Thomas Hameister for providing feedback on the draft.
	
	\section{Quantum Hamiltonian reductions and the  $\Z$-algebra construction}
	Fix an integer $n\geq 2$. Let $\ol{G}=\mathrm{GL}_n$ with Lie algebra $\ol{\lieg}:=\mathfrak{gl}_n$. Let $G=\SL_n$, $\lieg=\sln$. Let $W=S_n$ be the Weyl group.
	\subsection{Representations of Rational Cherednik algebras}
	Fix the maximal torus $T\subset G$ with Lie algebra $\lieh\subset \lieg$, consisting of diagonal matrices. For any $c\in \C$, we define the rational Cherednik algebra $\mathrm{H}_c:=\Hrm_c(\lieh)$ to be the $\C$-algebra generated by $\lieh$, $\lieh^*$ and $W$ with relations
	\begin{align*}
		&[x,x']=[y,y']=0, \quad 
		wxw^{-1}=w(x), \quad wyw^{-1}=w(y)\\
		&[y,x]=x(y)-\sum_{s\in S}c\langle \alpha_s,y\rangle\langle \alpha_s^\vee, x\rangle s
	\end{align*}
	where $x,x'\in \lieh^*$, $y,y'\in \lieh$, $w\in W$, $S\subset W$ is the set of simple reflections and $\alpha_s$, resp. $\alpha_s^\vee$, is the root, resp. coroot, associated to $s$. In particular $\Hrm_0=\Dcal(\lieh)\ltimes S_n$ (for a definition of $\Dcal(\lieh)$, see Section \ref{D-modules}).
	
	For any $W$-representation $\tau$, we may regard it as a $\C[\lieh^*]\ltimes W$-representation by requiring $\C[\lieh^*]$ to act trivially and define the Verma module \[M_c(\tau)=\Hrm_c\otimes_{\C[\lieh^*]\ltimes W}\tau.\]
	The Verma module $M_c(\tau)$ has a maximal proper submodule with an irreducible quotient $\Lrm_c(\tau)$.

	\begin{thm}\label{beg}
		(\cite[Theorem 1.2]{beg})	When $c=\frac{m}{n}$ for positive integer $m$ coprime to $n$, the only irreducible finite-dimensional representation of $\Hrm_c$ is $\Lrm_c(\triv)$. Moreover, only when $c=\frac{m}{n}$ for integer $m$ coprime to $n$ does $\Hrm_c$ have finite-dimensional representations.
	\end{thm}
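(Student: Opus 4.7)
The plan is to first reduce to category $\Ocal_c$ for $\Hrm_c$, and then study irreducible quotients of standard modules in $\Ocal_c$ together with the Knizhnik--Zamolodchikov functor.

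First, any finite-dimensional simple $\Hrm_c$-module $M$ is automatically an object of category $\Ocal_c$: the subspace $\lieh\subset \Hrm_c$ has Euler weight $-1$ under $\hrm_c$, so $\lieh$ must act locally nilpotently on $M$, for otherwise $M$ would contain vectors of arbitrarily negative Euler weight. Consequently $M$ is a quotient of some standard module $M_c(\tau)$ with $\tau$ an irreducible $W$-representation, and therefore $M\cong \Lrm_c(\tau)$.

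Second, for $c=m/n$ with $(m,n)=1$, I would produce $\Lrm_c(\triv)$ as a finite-dimensional quotient of $M_c(\triv)=\C[\lieh]\otimes \triv$ of dimension $m^{n-1}$. On $\C[\lieh]^W$ the Dunkl operators restrict to commuting differential operators whose principal symbols are the trigonometric Calogero--Moser Hamiltonians. A direct computation shows that at $c=m/n$ an explicit symmetric polynomial (built from rescaled power sums, or from the quasi-invariant construction) is singular in $M_c(\triv)$; taking the quotient by the submodule it generates yields $\Lrm_c(\triv)$, whose dimension $m^{n-1}$ matches the expected graded character and specializes, at $m=n+1$, to Haiman's master theorem on diagonal coinvariants.

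Third, to exclude other parameters $c$ and other $\tau$, I would invoke the KZ functor $\mathrm{KZ}\colon \Ocal_c\to \Hcal_q(S_n)\Mod$ of Ginzburg--Guay--Opdam--Rouquier with $q=\exp(2\pi i c)$, whose kernel is the full subcategory of $\Ocal_c$-modules set-theoretically supported at $0\in\lieh$; every finite-dimensional simple lies in $\ker(\mathrm{KZ})$. By Rouquier's analysis, $\ker(\mathrm{KZ})\neq 0$ exactly at the aspherical parameters, which in type $A$ reduce, via the reflection symmetry $\Hrm_c\cong \Hrm_{-c}$, to the rationals $c=m/n$ with $(m,n)=1$. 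At such $c$, a length count in $\ker(\mathrm{KZ})$ combined with the combinatorics of the $n$-core/quotient decomposition of Young diagrams leaves $\Lrm_c(\triv)$ as the unique simple killed by $\mathrm{KZ}$, matching the construction above.

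The main obstacle is controlling $\ker(\mathrm{KZ})$ uniformly in $c$: this requires either Rouquier's equivalence of $\ker(\mathrm{KZ})$ with a subcategory of Hecke-algebra (or KLR) modules, or an independent character calculation via Cherednik's intertwining operators on Verma modules. The existence argument is substantially easier, as it reduces to explicit Dunkl-operator computations on symmetric polynomials.
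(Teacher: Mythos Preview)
The paper does not prove this theorem: it is quoted verbatim as \cite[Theorem 1.2]{beg} and used as a black box. So there is no proof in the paper to compare against; the relevant question is whether your sketch reproduces the result.

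Your reduction to category $\Ocal$ and the existence step are fine in outline, and the dimension $m^{n-1}$ is correct. The problem is in your third step. You assert that $\ker(\mathrm{KZ})$ contains a simple object precisely at the aspherical parameters, and that in type $A$ these coincide (up to the sign symmetry $\Hrm_c\cong\Hrm_{-c}$) with the rationals $c=m/n$, $(m,n)=1$. Neither identification is correct. A simple $\Lrm_c(\tau)$ lies in $\ker(\mathrm{KZ})$ exactly when it is finite-dimensional, whereas $c$ is aspherical exactly when some simple is annihilated by the symmetrizer $\e$; these are different conditions. Concretely, in type $A_{n-1}$ the aspherical values are the rationals in $(-1,0)$ with denominator $\le n$ (this is precisely the set excluded in condition~(\ref{good_c}) of the paper), while $c=1/n>0$ is spherical yet $\Lrm_{1/n}(\triv)$ is one-dimensional. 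So the locus you are after is not the aspherical locus, and invoking Rouquier's description of $\ker(\mathrm{KZ})$ via asphericity does not pin down the parameters.

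What actually controls the existence of finite-dimensional simples is the $\mathfrak{sl}_2$-triple $(e,f,\hrm_c)$ on category $\Ocal$: a simple $\Lrm_c(\tau)$ is finite-dimensional iff its $\hrm_c$-weights form a finite symmetric string, which forces the lowest weight $c\cdot\tfrac{n(n-1)}{2}-\mathrm{content}(\tau)$ to be a half-integer compatible with the $W$-character. In type $A$ this computation, together with an explicit singular-vector analysis (or Opdam's shift operators), is how \cite{beg} arrives at the answer; the KZ functor is not needed and, as you note yourself in the last paragraph, making the KZ route rigorous would require substantially more input than you have supplied.
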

	Below, we will simply write $\Lrm_c:=\Lrm_c(\triv)$. Let $\Ocal(\Hrm_c)$ denote the BGG category $\Ocal$ of the rational Cherednik algebra, which is a full subcategory of $\Hrm_c\Mod$ whose objects are finitely generated over $\Hrm_c$ such that the $S(\lieh)$ action is locally nilpotent (\cite{ggor}). Then as $\tau$ varies over irreducible $W$-representations, the modules $\Lrm_c(\tau)$ give a complete list of irreducible objects in $\Ocal(\Hrm_c)$.
	
	Let $\e:=\frac{1}{n!}\sum_{w\in W}w$ be the symmetrizing idempotent in $\Hrm_c$. Then inside $\Hrm_c$, $\e\C[\lieh]\cong \C[\lieh]^W$ and $\e S(\lieh)\cong S(\lieh)^W$. The spherical Cherednik algebra is defined by $\Arm_c:=\e\Hrm_c \e$. By \cite[Theorem 4.1]{be_parabolic_ind_res}, for  all $c$ satisfying \begin{align}\label{good_c} c\notin \{\frac{a}{b}\in (-1,0)|a,b\in \Z,2\leq b\leq n\}\end{align} there is a Morita equivalence \[\Hrm_c\Mod\to \Arm_c\Mod, \quad M\mapsto \e  M.\] 
	Denote the BGG category $\Ocal$ of the spherical Cherednik algebra by $\Ocal(\Arm_c)$.
	
	Let $R^+$ be a chosen set of positive roots. Write $\delta=\prod_{\alpha\in R^+} \alpha$. Let $\lieh_r=\{\delta\neq 0\}$ denote the regular locus of $\lieh$, i.e., when the diagonals are pairwiese distinct. The action of $\Hrm_c$ on $\C[\lieh]\cong \Hrm_c\otimes_{\C[\lieh^*]\ltimes W}\C$ gives the Dunkl embedding of $\Hrm_c$ into $\Dcal(\lieh_r)\ltimes W$. In particular, assume $\{x_i\}$ is a basis of $\lieh^*$ and $\{y_i\}$ is its dual basis of $\lieh$. Then the Dunkl operator associated to $y_i$ is 
	$D_{y_i,c}:=\partial_{x_i}-c\sum_{s\in S} \frac{\alpha_s(x_i)}{\alpha_s}(1-s)$.
	
	Localized at $\delta:=\prod_{\alpha\in \Delta^+}\alpha$,  we have that $(\Hrm_c)_\delta\cong \Dcal(\lieh_r)\ltimes W.$
	Define a symmetry on $(\Hrm_c)_\delta$ by sending \[x\mapsto x, \quad D_{y_i,c}\mapsto D_{y_i,-c},\quad w\mapsto \mathrm{sign}(w)w\]
	which restricts to an isomorphism 
	$ \Arm_d\to  \delta^{-1}\mathrm{A}_{-d-1}\delta$ (\cite[5.6, 5.8]{ggs}). This induces an equivalence of categories, which we will use later in the paper:
	\[\Omega_d=\Omega_{-d-1}^{-1}: \Arm_d\Mod\cong \mathrm{A}_{-d-1}\Mod.\]
	\subsection{Quantum Hamiltonian reduction functors}\label{Hamil}
	\subsubsection{Notations and conventions on $\Dcal$-modules}\label{D-modules}
	For any smooth algebraic variety $X$, denote the sheaf of differential operators on $X$ by $\Dcal_X$ and write $\Dcal(X):=\Gamma(X,\Dcal_X)$. Suppose $\{x_i\}$ is a local coordinate system of $X$ in an affine open subset $U$. Then $\Dcal_X(U)$ is the algebra generated by $x_i, \partial_{x_i}$ where $\partial_{x_i}$ is the partial derivative along the direction of $x_i$, satisfying the relations that $[\partial_{x_i},x_j]=\delta_{i,j}$.
	
	We define the order filtration $F^\ord$ on $\Dcal_X$ by setting \[(F^\ord_k\Dcal_X)(U)=\sum_{|\alpha|\leq k}\Ocal(U)\partial_{x}^\alpha\] i.e., it is the filtration induced by setting deg$(x)=0$ and deg$(\partial_x)=1$.
	
	Let $\pi: T^* X\to X$ be the projection map. Then with respect to the order filtration, we have that
	\[
	\gr^\ord \Dcal_X\cong \pi_* \Ocal_{T^*X}.
	\]
	For any coherent $\Dcal_X$-module $M$ with an increasing filtration $F$, we say $(M,F)$ is a filtered $\Dcal_X$-modules if $F_iM=0$ for $i\ll0$ and $F_i\Dcal_X\cdot F_jM\subset F_{i+j}M$. With the latter condition, we are able to define an associated coherent sheaf on $\Ocal_{T^*X}$ by 
	\[\wt{\gr}^F(M):=\pi^{-1}\gr^F M\otimes_{\pi^{-1}\pi_*\Ocal_{T^* X}} \Ocal_{T^* X}.\] We say the filtration $F$ of $M$ is good if $\wt{\gr}^F(M)$ is a coherent sheaf on $T^*X$.
	
	The support of $\wt{\gr}^F(M)$ in $T^*X$ is independent of the choice of a good filtration, which we will call the singular support of $M$ and denote by $SS(M)$. 
	\subsubsection{Mirabolic $\Dcal$-modules and Cherednik algebras}

	Let $V=\C^n$ be the standard representation of $\ol{G}$ and $\Gfrak=\lieg\times V$. Let $\tau:\ol{\lieg}\to \Dcal(\Gfrak)$ denote the embedding induced by differentiating the diagonal action of $\ol{G}$ on $\Gfrak$. Our convention is that for any $f\in \Ocal(\Gfrak)$ and $g\in\ol{G}$, $(g\cdot f)(x)=f(g^{-1}\cdot x)$. In particular, let $\mathbf{1}\in \ol{\lieg}$ denote the identity matrix. Then $\tau(\mathbf{1})=-\sum_{i=1}^n v_i\partial_{v_i}$.
	
	Define the shifted embedding $\tau_d$ by a constant $d\in \C$ by (later we will take $d$ to be $-c$)\[\tau_d: \ol{\lieg}\to \Dcal(\Gfrak), \quad x\to \tau(x)-d\cdot\mathrm{tr}(x).\] 
	
	Define
	\[\Lambda:=\{(X,Y,i,j)\in \lieg\times\Nscr\times V\times V^*| [X,Y]+ij=0\},\] 
	which is a Lagrangian subvariety of $T^*\Gfrak$. Following \cite{bgnearbycycle}, we call a finitely generated $\Dcal_\Gfrak$-module mirabolic if it is locally finite as a $\tau(\ol{\lieg})$-module and its singular support is contained in $\Lambda$. Denote the category of mirabolic $\Dcal_\Gfrak$-modules  by $\Cscr(\Gfrak)$. 
	
	Fix a nonzero element $\vol\in \bigwedge^n V^*$. Following \cite[(5.3.2)]{Bezru-Finkelberg-Ginzburg}, we define a regular function on $\Gfrak$ by \begin{align}\label{s}s(x,v)=\langle \vol, v\wedge xv\wedge \dots\wedge x^{n-1}v\rangle.\end{align}
	Denote $\Gfrak_\cyc:=\{(x,v)\in\Gfrak|s(x,v)\neq 0\}$, which is equivalently the locus when $v$ is a cyclic vector for  $x$, i.e., $\C[x]v=V$. It is well-known that the composition of the projection $\Gfrak_\cyc\to \lieg$ and the Chevalley map $\lieg\to \lieh/W$ is a principal $\ol{G}$-bundle, such that the diagonal $\ol{G}$-action on $\Gfrak_\cyc$ acts freely on the fibers. Therefore we have an isomorphism
	\begin{align}
		\label{Gfrak_h}
		\iota: \C[\Gfrak_\cyc]^{\ol{G}}\cong \C[\lieh]^W.
	\end{align}
	Note that $s^{-d}\in \C[\Gfrak_\cyc]^{\tau_d(\ol{\lieg})}$. In fact $\C[\Gfrak_\cyc]^{\tau_d(\ol{\lieg})}=\C[\Gfrak_\cyc]^Gs^{-d}$. Moreover,
	\begin{thm}(\cite[Theorem 1.3.1]{ganginzburg},\cite[Theorem 8.1]{ggs} The radial part map
		\begin{align*}
			\Dcal(\Gfrak)^{\ol{G}}\to \Dcal(\lieh/W),\quad 
			u\mapsto \bigg[\C[\lieh/W]\ni f\mapsto \iota\big(s^d u\big( s^{-d}\iota^{-1}(f)\big)\big)\bigg]
		\end{align*}
		induces an isomorphism
		\begin{align}
			\label{HC}
			\Hcal_d: 	(\Dcal(\Gfrak)/\Dcal(\Gfrak)\tau_d(\ol{\lieg}))^{\ol{G}}\cong \mathrm{A}_{d-1},
		\end{align}
		which induces the quantum Hamiltonian reduction functor 
		\[\Hbb_d: \Cscr(\Gfrak)\to \Ocal(\Arm_{d-1}),\quad \Mbb\mapsto \Gamma(\Gfrak,\Mbb)^{\tau_d(\ol{\lieg})}\]
		such that \[\Cscr(\Gfrak)/\Ker(\Hbb_d)\cong \Ocal(\Arm_{d-1}).\]
	\end{thm}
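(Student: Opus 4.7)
The plan is to reduce the algebra isomorphism $\Hcal_d$ to a descent statement along the principal $\ol{G}$-bundle $\Gfrak_\cyc\to \lieh/W$ of \eqref{Gfrak_h}, with the twist by $s^{-d}$ serving as a trivialization of the $\tau_d$-equivariance. Concretely, conjugation $u\mapsto s^{d}\cdot u\cdot s^{-d}$ converts $\tau_d(\ol{\lieg})$-equivariant differential operators on $\Gfrak$ into honestly $\ol{G}$-equivariant ones on $\Gfrak_\cyc$; these descend via $\iota$ to differential operators on $\lieh/W$, and the recipe kills $\Dcal(\Gfrak)\tau_d(\ol{\lieg})$ because such operators annihilate $\C[\Gfrak_\cyc]^{\tau_d(\ol{\lieg})}=\C[\Gfrak_\cyc]^{\ol{G}}s^{-d}$ after the twist. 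Thus the radial part is a well-defined algebra homomorphism into $\Dcal(\lieh/W)$.

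The real content is identifying its image with precisely $\Arm_{d-1}$ rather than a larger subalgebra. I would choose a generating set of $(\Dcal(\Gfrak)/\Dcal(\Gfrak)\tau_d(\ol{\lieg}))^{\ol{G}}$ through a Kostant-Rallis style decomposition, and then compute their radial parts explicitly in coordinates on $\lieh_r$. Degree-zero generators descend through $\iota$ to $\e\C[\lieh]^W\e$, while operators built from $V$-directional derivatives contracted against matrix-entry functions on $\lieg$ produce, after the $s^{-d}$-conjugation, the symmetrized Dunkl operators $D_{y,c}$ appearing in the Dunkl embedding of $\Arm_c$. The parameter shift $c=d-1$ is forced by the Jacobian of $\iota$, namely the Weyl denominator $\delta$, combined with the $s^{-d}$ twist; matching generators this way pins the image down to exactly $\Arm_{d-1}$.

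To upgrade surjectivity and injectivity to the claimed isomorphism, I would pass to associated gradeds with respect to the order filtrations on both sides and reduce to the classical statement
\[\gr\bigl[(\Dcal(\Gfrak)/\Dcal(\Gfrak)\tau_d(\ol{\lieg}))^{\ol{G}}\bigr] \cong \C[T^*\lieh]^W,\]
which is a consequence of the flatness of the moment map $T^*\Gfrak\to \ol{\lieg}^*$ established in \cite{ganginzburg}. A standard filtered-to-graded lifting then recovers the full isomorphism $\Hcal_d$. For the functor $\Hbb_d$, the algebra isomorphism automatically endows $\Mbb^{\tau_d(\ol{\lieg})}$ with an $\Arm_{d-1}$-module structure. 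Finite generation over $\Arm_{d-1}$ follows from the mirabolic hypothesis (local finiteness of $\tau(\ol{\lieg})$ together with coherence of $\Mbb$), and local nilpotence of $\e S(\lieh)\e$ on the output follows from $SS(\Mbb)\subset \Lambda$: the $Y$-component of $\Lambda$ is nilpotent, so on the associated graded the image of $\Hbb_d(\Mbb)$ is supported in the zero section of $T^*(\lieh/W)$, placing $\Hbb_d(\Mbb)$ in $\Ocal(\Arm_{d-1})$. The quotient equivalence $\Cscr(\Gfrak)/\Ker(\Hbb_d)\cong \Ocal(\Arm_{d-1})$ then follows from Gabriel's theorem once a right adjoint is produced, for which induction from the bimodule $\Dcal(\Gfrak)/\Dcal(\Gfrak)\tau_d(\ol{\lieg})$ is the natural candidate.

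The main obstacle I anticipate is the precise identification of the image of the radial part map with $\Arm_{d-1}$ together with the bookkeeping of the parameter shift $c=d-1$. The shift originates from the interaction between the Weyl denominator, appearing as the Jacobian of $\iota$, and the $s^{-d}$-twist, and tracking this contribution through the radial part computation on Dunkl-type generators is the delicate technical heart of the theorem.
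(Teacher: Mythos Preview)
The paper does not prove this theorem; it is quoted verbatim from \cite[Theorem 1.3.1]{ganginzburg} and \cite[Theorem 8.1]{ggs} with no argument beyond the remark explaining the $-1$ shift via the classical Harish-Chandra isomorphism. There is therefore no ``paper's own proof'' to compare against.

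That said, your sketch is broadly faithful to how the cited references actually proceed: the radial part map is set up exactly as you describe via descent along the principal $\ol{G}$-bundle $\Gfrak_\cyc\to\lieh/W$ twisted by $s^{-d}$, and the identification of the image with $\Arm_{d-1}$ is indeed obtained by an explicit computation on generators together with a filtered-to-graded argument resting on flatness of the moment map. One refinement: in \cite{ganginzburg} the key commutative input is not merely flatness of $T^*\Gfrak\to\ol{\lieg}^*$ but the stronger statement that the scheme-theoretic zero fibre is reduced and its invariant ring is $\C[\lieh\times\lieh^*]^W$, which is what makes the associated-graded isomorphism go through cleanly. Also, the quotient equivalence $\Cscr(\Gfrak)/\Ker(\Hbb_d)\cong\Ocal(\Arm_{d-1})$ in \cite{ggs} is not deduced from Gabriel's theorem via an abstract right adjoint but rather by constructing an explicit left inverse functor (a ``top'' functor) and checking it is fully faithful on the quotient; your proposed route via adjunction would work but is not what those authors do.
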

	\begin{rem}
		The ``$-1$" factor in the subscript of $\Arm_{d-1}$ is consistent with the classical result of Harish-Chandra that \[\Dcal(\lieg)^G\to \Dcal(\lieh/W),\quad 
		u\mapsto \bigg[\C[\lieh/W]\ni f\mapsto \delta (u|_{\C[\lieh/W]})\big( \delta^{-1}f\big)\bigg]\] induces an isomorphism $\big(\Dcal(\lieg)/\Dcal(\lieg)\ad(\lieg)\big)^G\cong D(\lieh)^W$. Here $\delta$ is the product of all positive roots and the restriction $u|_{\C[\lieh/W]}$ is taken with respect to the Chevalley isomorphism $C[\lieg]^G\cong \C[\lieh]^W$. 
	\end{rem}
	
	\subsection{Cuspidal mirabolic $\Dcal$-modules}\label{dmod}
	\subsubsection{Functors on $\Dcal$-modules}
	Let $f: X\to Y$ be a morphism of smooth algebraic varieties. Let $\Omega_{X/Y}=\Omega_{X}\otimes f^*\omega_Y^{-1}$ be the relative canonical bundle.
	
	Define $\Dcal_{X\to Y}:=f^*\Dcal_Y$ and $\Dcal_{Y\leftarrow X}:=\Dcal_{X\to Y}\otimes_{\Ocal_X}\Omega_{X/Y}$. There are a $(\Dcal_X,f^{-1}\Dcal_Y)$-bimodule structure on $\Dcal_{X\to Y}$ and a $(f^{-1}\Dcal_Y,\Dcal_X)$-bimodule structure on $\Dcal_{Y\leftarrow X}$ (\cite[1.3]{htt}).
	
	Suppose $M$ is a $\Dcal_X$-module and $N$ is a $\Dcal_Y$-module. We define the $\Dcal$-module pullback of $N$ to be $f^\dagger N:=\Dcal_{X\to Y}\otimes^L_{f^{-1}\Dcal_Y}f^{-1}N$ and the $\Dcal$-module pushforward of $M$ to be 
	$f_\dagger M=Rf_*(\Dcal_{Y\leftarrow X}\otimes^L_{\Dcal_X}M)$. 
	
	One can similarly define the proper pushforward $f_!$ of a $\Dcal$-module using the Verdier duality. When $i$ is a locally closed embedding, we define the minimal extension functor $i_{!*}$ to be the image under the canonical morphism $i_!\to i_\dagger$. 
	\begin{defn}
		We say a local system on a locally closed subset  $U\xrightarrow{i}X$ is clean if its minimal extension coincides with the extensions using $i_\dagger$ or $i_!$. 
	\end{defn}
	%Since the local system we intend to take minimal extension of is clean, we will not get into details of minimal extensions but stick to the $\dagger$-extension throughout.
	\subsubsection{Cuspidal local systems}
	Let $\Nscr$ be the subvariety in $\lieg$ consisting of nilpotent matrices. Then the $0$ fiber of the fibration $\Gfrak_\cyc\to \lieh/W$ is 
	\begin{align*}
		U:=\{(x,v)\in\Nscr\times V|s(x,v)\neq 0\}.
	\end{align*}
	The diagonal $G$ action on $U$ is transitive and free. Hence $\pi_1(U)=\Z$. We have that $g\cdot s=\det(g)^{-1}s$ for any $g\in G$ and thus every simple local system on $U$ of finite order monodromy is given by the multi-valued section $s^{a}$ for some rational number $a$. We will denote the $\Dcal_U$-module corresponding to such a local system by $\mathcal{E}_a$. 
	
	Let $\Nscr_r$ denote the unique regular $G$-orbit in $\Nscr$, fix an element $x\in \Nscr_r$ and write $\mathrm{Stab}_{G}(x)$ for the stablizer of $x$ in $\SL_n$. Then the fibration
	\[\xymatrix{\mathrm{Stab}_{G}(x)\ar[r]&G\ar[d]\\ &\Nscr_r}\]
	induces exact sequence 
	\[1=\pi_1(G)\to \pi_1(\Nscr_r)\to \pi_0(\mathrm{Stab}_{G}(x))\to 1\]
	which implies $\pi_1(\Nscr_r)\cong \pi_0(\mathrm{Stab}_{G}(x))=\pi_0(Z(G))=\Z/n\Z$. Moreover, every simple local system on $\Nscr_r$ of finite order monodromy is defined by the representation of $\Z/ n\Z$ given by $e^{2\pi ib}$ for some $b\in \frac{1}{n}\Z$. We will denote the $\Dcal_{\Nscr_r}$-module corresponding to such a local system by $\Fcal_b$. Both $\Ecal_a$ and $\Fcal_b$ are $G$-equivariant.
	
	The projection $U\to \Nscr$ has image inside $\Nscr_r$ and the fibers of this projection are isomorphic to $ \mathfrak{F}:=\C^{n-1}\times\C^*$. The fibration \[\xymatrix{ \mathfrak{F}\ar[r]& U\ar[d]\\& \Nscr_r}\]  induces an exact sequence
	\[1\to(\Z=\pi_1(\mathfrak{F}))\xrightarrow{n}\left(\Z=\pi_1(U)\right)\to\left(\pi_1(\Nscr_r)=\Z/n\Z\right)\to 1.\]
	It follows that $\Ecal_a$ is the pullback of the local system $\Fcal_{a}$ on $\Nscr_r$ for $a\in \frac{\Z}{n}$.
	\subsubsection{Cuspidal mirabolic $\Dcal$-modules and DAHA representations}
	Let $c=\frac{m}{n}$ for positive integer $m$ coprime to $n$.
	
	\begin{defn}
		\begin{itemize}
			\item The cuspidal character $\Dcal_\lieg$-module with parameter $c$ is the minimal extension of the local system $\Fcal_{c}$ to $\lieg$, which we denote by $\Nbb_{c}$.
			\item The cuspidal mirabolic $\Dcal_\Gfrak$-module of parameter $c$ is defined to be the minimal extension, i.e $!*$-extension, of $\mathcal{E}_{c}$ to $\Gfrak$, which we denote by $\ol{\Nbb}_c$.
		\end{itemize}
	\end{defn} 
	By \cite{lusztig4}, $\Fcal_c$ is clean. On the other hand, $\Ecal_c$ is not clean, but we have the following: 
	\begin{lem}\label{lemma_mirabolic_cuspidal}
		We have $\ol{\Nbb}_c\cong\Nbb_c \boxtimes \Ocal_V$ as $\SL_n$-equivariant $\Dcal_\Gfrak$-modules. Moreover \begin{align}\label{U}
			SS(\ol{\Nbb}_c)=\{(x,y)\in  \Nscr\times \Nscr|  [x,y]=0\}\times V.
		\end{align}
	\end{lem}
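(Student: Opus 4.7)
The plan is to establish the isomorphism first, and then derive the singular support from the external tensor structure.

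For the isomorphism, my strategy is to recognize $\Nbb_c \boxtimes \Ocal_V$ as the minimal extension of $\Ecal_c$ from $U$ to $\Gfrak$. I would factor the locally closed embedding $U \hookrightarrow \Gfrak$ as $U \hookrightarrow \Nscr_r \times V \hookrightarrow \Gfrak$. The restriction of $\Nbb_c \boxtimes \Ocal_V$ to the intermediate piece is $\Fcal_c \boxtimes \Ocal_V$, and further restriction to $U$ gives the pullback of $\Fcal_c$ along the projection $U \to \Nscr_r$, which equals $\Ecal_c$ by the discussion preceding the lemma. Irreducibility of $\Nbb_c \boxtimes \Ocal_V$ as a $\Dcal_\Gfrak$-module follows from the general fact that external tensor products of irreducible holonomic $\Dcal$-modules are irreducible, and its support $\Nscr \times V$ coincides with the closure of $U$ in $\Gfrak$ (since $\Nscr_r$ is dense in $\Nscr$ and the cyclic vectors for any fixed regular nilpotent form a Zariski-dense open subset of $V$). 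Uniqueness of the minimal extension as the unique irreducible $\Dcal_\Gfrak$-module restricting to $\Ecal_c$ on $U$ then forces $\Nbb_c \boxtimes \Ocal_V \cong \ol{\Nbb}_c$, and the isomorphism is $\SL_n$-equivariant because both tensor factors are.

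For the singular support, I would invoke multiplicativity $SS(\Nbb_c \boxtimes \Ocal_V) = SS(\Nbb_c) \times SS(\Ocal_V)$ inside $T^*\Gfrak = T^*\lieg \times T^*V$. Since $SS(\Ocal_V) = V \times \{0\}$ is the zero section, this produces the $\times V$ factor in the statement, with the cotangent coordinate $j = 0$ implicit. To compute $SS(\Nbb_c)$: it contains the conormal bundle $T^*_{\Nscr_r}\lieg$, whose fiber at a regular nilpotent $x$ is the centralizer $\lieg^x = \C\{x, x^2, \ldots, x^{n-1}\}$, an $(n-1)$-dimensional space of nilpotent elements in $\sln$; thus $T^*_{\Nscr_r}\lieg$ is an open $(n^2-1)$-dimensional subvariety of the nilpotent commuting variety $\{(x,y) \in \Nscr \times \Nscr \mid [x,y]=0\}$. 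By Baranovsky's theorem that this variety is irreducible of dimension $n^2 - 1$, the closure $\overline{T^*_{\Nscr_r}\lieg}$ fills out the full nilpotent commuting variety. Lusztig's cleanness of $\Fcal_c$ in \cite{lusztig4}, which makes $\Nbb_c$ coincide with $i_!\Fcal_c = i_*\Fcal_c$ for $i : \Nscr_r \hookrightarrow \lieg$, ensures that $SS(\Nbb_c)$ has no additional irreducible components supported over smaller nilpotent orbits, so $SS(\Nbb_c)$ equals precisely the nilpotent commuting variety, yielding the asserted formula for $SS(\ol{\Nbb}_c)$.

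The external inputs are Lusztig's cleanness and Baranovsky's irreducibility theorem, both standard. The principal technical point requiring care is excluding spurious singular-support components over non-regular nilpotent orbits: the basic constraints from equivariance and support would a priori allow components of the form $\overline{T^*_{\mathcal{O}}\lieg}$ for smaller orbits $\mathcal{O} \subsetneq \Nscr_r$, in which the cotangent fiber is the full (not necessarily nilpotent) centralizer of an element of $\mathcal{O}$, and it is exactly cleanness that rules these out. The remainder of the argument is bookkeeping of supports and restrictions across the factorization $U \hookrightarrow \Nscr_r \times V \hookrightarrow \Gfrak$.
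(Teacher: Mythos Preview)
Your argument for the first statement is correct and follows essentially the same route as the paper: both factor through $\Nscr_r \times V$ and use that $\Ecal_c$ is pulled back from $\Fcal_c$, so that the minimal extension to $\Nscr_r \times V$ is $\Fcal_c \boxtimes \Ocal_V$, after which one extends further to $\Gfrak$. The paper phrases the first step as ``$\Ecal_c$ has no monodromy along $V$'' (since $s$ has degree $n$ in the $V$-variables and $cn = m \in \Z$), while you invoke the equivalent fact, established just before the lemma, that $\Ecal_c$ is the pullback of $\Fcal_c$ along $U\to\Nscr_r$; your appeal to irreducibility of external tensor products to pin down the minimal extension is a clean way to finish.

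For the singular support, however, there is a gap. You claim that cleanness of $\Fcal_c$ rules out components $\overline{T^*_{\mathcal{O}}\lieg}$ for smaller orbits $\mathcal{O}$, but cleanness (vanishing of stalks and costalks on the boundary) does not by itself control the characteristic cycle. Already on $\Abb^1$, the clean extension $j_{!}\Lcal = j_*\Lcal$ of a rank-one local system with nontrivial monodromy on $\Abb^1\setminus\{0\}$ has singular support $\{x\xi = 0\}$, which contains the extra component $T^*_0\Abb^1$ even though the stalk at $0$ vanishes. What actually forces $SS(\Nbb_c) \subset \Nscr \times \Nscr$ is a genuinely additional input: either one cites the known computation for cuspidal character sheaves (this is what the paper does), or one uses Fourier-invariance of $\Nbb_c$ (available from Lusztig, and reproved later in the paper) to see that the projection of $SS(\Nbb_c)$ to the second factor also lands in $\Nscr$. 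Once $SS(\Nbb_c)\subset \Nscr\times\Nscr$ is in hand, your Baranovsky argument does finish the job, since the conormal bundles to non-regular orbits contain non-nilpotent centralizing elements and hence cannot lie in $\Nscr\times\Nscr$.
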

	\begin{proof}
		The function $s$ has degree $n$ along the direction of $V$. Since $c \cdot n=m$, the local system $\Ecal_c$, defined by the multi-valued function $s^c$, has no monodromy along the $V$ direction. Therefore the minimal extension of $\mathcal{E}_{c}$ to $\Nscr_r\times V$ is $\Fcal_{c}\boxtimes \Ocal_V$ and the first statement follows from the cleanness of $\Fcal_c$. On the other hand, it is known that the singular support of a cuspidal character $\Dcal$-module equals $\{(x,y)\in  \Nscr\times \Nscr|  [x,y]=0\}$ and hence the second statement of the lemma follows.
	\end{proof}
	
	\begin{thm}\label{thm_CEE}
		(\cite[Theorem 9.19]{cee})
		\begin{itemize}
			\item 
			There is a $\Arm_{-c-1}$-action on $(\Gamma(\lieg,\Nbb_c)\otimes\Sym^m V)^{\SL_n}$. \item Under this action, $(\Gamma(\lieg,\Nbb_c)\otimes\Sym^m V)^{\SL_n}$ is isomorphic to $ \Omega_c(\e\Lrm_c)$.
		\end{itemize}
	\end{thm}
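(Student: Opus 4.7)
The plan is to deduce the theorem by applying the quantum Hamiltonian reduction functor $\Hbb_{-c}$ of Section \ref{Hamil} to the mirabolic $\Dcal$-module $\ol{\Nbb}_c$. Lemma \ref{lemma_mirabolic_cuspidal} already places $\ol{\Nbb}_c$ in $\Cscr(\Gfrak)$: its singular support lies inside $\Lambda$, and $\tau(\ol{\lieg})$ acts locally finitely because $\Nbb_c$ is $\SL_n$-equivariant and $\tau(\one) = -\sum v_i\partial_{v_i}$ acts semisimply on $\Ocal_V$. Consequently $\Hbb_{-c}(\ol{\Nbb}_c) \in \Ocal(\Arm_{-c-1})$, which supplies the $\Arm_{-c-1}$-action asserted in the first bullet.

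To unwind the formula for $\Hbb_{-c}(\ol{\Nbb}_c)$, I would split $\ol{\lieg} = \lieg \oplus \C\one$. Since $\tr$ vanishes on $\lieg$ and $\tr(\one) = n$, the condition $\tau_{-c}(\ol{\lieg}) \cdot s = 0$ becomes $\SL_n$-invariance together with the eigenvalue equation $\tau(\one) = -cn = -m$. Applied to $\Gamma(\Gfrak, \ol{\Nbb}_c) = \Gamma(\lieg, \Nbb_c) \otimes \bigoplus_{k \geq 0}\Sym^k V$, the eigenvalue equation selects $k = m$, yielding
\[\Hbb_{-c}(\ol{\Nbb}_c) \;\cong\; \big(\Gamma(\lieg, \Nbb_c) \otimes \Sym^m V\big)^{\SL_n},\]
which establishes the first bullet.

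For the second bullet I would argue by uniqueness. The singular support of $\ol{\Nbb}_c$ consists of commuting pairs of nilpotents (times $V$), which projects to $\{0\} \subset \lieh/W$ under Chevalley; hence $\Hbb_{-c}(\ol{\Nbb}_c)$ is finite-dimensional. The module $\Nbb_c$ is simple as the minimal extension of a simple local system, and since $\Hbb_{-c}$ induces the equivalence $\Cscr(\Gfrak)/\Ker(\Hbb_{-c}) \cong \Ocal(\Arm_{-c-1})$, its image is either zero or simple. Through the equivalence $\Omega_c:\Arm_c\Mod \cong \Arm_{-c-1}\Mod$ combined with the BEG classification (Theorem \ref{beg}), the module $\Omega_c(\e\Lrm_c)$ is the unique finite-dimensional simple object of $\Ocal(\Arm_{-c-1})$, so the identification will follow once nonvanishing is verified.

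The main obstacle is therefore ruling out $\ol{\Nbb}_c \in \Ker(\Hbb_{-c})$. The natural route is to restrict to the open locus $U$ from (\ref{U}), which is nonempty because any principal nilpotent admits a cyclic vector; here $\ol{\Nbb}_c$ reduces to the simple local system $\Ecal_c$ represented by the multi-valued section $s^c$. The coprimality of $m$ and $n$ ensures that the central character of $\Nbb_c$ under $\mu_n = Z(\SL_n)$ exactly cancels that of $\Sym^m V$, producing an explicit nonzero $\SL_n$-invariant section. The radial part isomorphism $\Hcal_{-c}$ of (\ref{HC}) together with $\iota$ of (\ref{Gfrak_h}) then transports the action of Dunkl operators on this section so as to match $\Omega_c(\e\Lrm_c)$; the sign twist $w \mapsto \sign(w)w$ built into $\Omega_c$ is precisely what compensates for the shift $c \mapsto -c-1$ on the Cherednik side.
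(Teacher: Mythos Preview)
The paper does not prove Theorem~\ref{thm_CEE}: it is stated as a citation of \cite[Theorem~9.19]{cee}, and the paper immediately uses it (together with Lemma~\ref{lemma_mirabolic_cuspidal} and the identity $\C[V]^{\tau_{-c}(\one)}=\Sym^m V$) to deduce Corollary~\ref{coro_lift}. Your proposal therefore cannot be compared against a proof in the paper; what you have written is an attempt at an independent argument, essentially running the logic of Corollary~\ref{coro_lift} backwards and replacing the CEE input by a classification argument.

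Your derivation of the first bullet is correct and is exactly the computation the paper carries out in the proof of Corollary~\ref{coro_lift}: the decomposition $\ol\lieg=\lieg\oplus\C\one$ reduces $\tau_{-c}(\ol\lieg)$-invariance to $\SL_n$-invariance plus the eigenvalue condition $\tau(\one)=-m$, which picks out $\Sym^mV$ inside $\C[V]$. The structural outline for the second bullet is also sound: $\Hbb_{-c}$ is an exact quotient functor, so a simple object of $\Cscr(\Gfrak)$ is sent to a simple object or to zero; the singular support $\{(x,y)\in\Nscr\times\Nscr\mid[x,y]=0\}\times V$ forces the Hamiltonian reduction to be finite-dimensional; and $\Omega_c$ transports the BEG uniqueness from $\Arm_c$ to $\Arm_{-c-1}$.

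The genuine gap is the nonvanishing step. Matching the $\mu_n=Z(\SL_n)$ central characters of $\Nbb_c$ and $\Sym^mV$ is only a necessary condition for $(\Gamma(\lieg,\Nbb_c)\otimes\Sym^mV)^{\SL_n}\neq0$; it does not by itself exhibit an $\SL_n$-invariant, and on the open locus $U$ the ``section'' $s^c$ is genuinely multi-valued, so the phrase ``explicit nonzero $\SL_n$-invariant section'' is not yet justified. A cleaner route, consistent with the references the paper already uses, is to invoke the geometric description of $\Ker(\Hbb_{-c})$ from \cite{ggs,bgnearbycycle}: an object of $\Cscr(\Gfrak)$ lies in the kernel precisely when its restriction to the stable locus $\Gfrak_\cyc$ vanishes. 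Since the support of $\ol\Nbb_c$ is $\Nscr\times V$ and $U=(\Nscr\times V)\cap\Gfrak_\cyc$ is nonempty (any regular nilpotent admits a cyclic vector), with $\ol\Nbb_c|_U=\Ecal_c\neq0$, one concludes $\ol\Nbb_c\notin\Ker(\Hbb_{-c})$. With that in place your uniqueness argument goes through, and you obtain a proof independent of the CEE construction, which is a route the paper does not take.
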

	\begin{coro}\label{coro_lift}
		We have $\Hbb_{-c}(\ol{\Nbb}_c)\cong \Omega_c(\e\Lrm_c)$ as $\Arm_{-c-1}$-modules.
	\end{coro}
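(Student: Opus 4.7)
The plan is to unpack $\Hbb_{-c}(\ol{\Nbb}_c)$ using Lemma \ref{lemma_mirabolic_cuspidal} and then identify the resulting space with the module appearing in Theorem \ref{thm_CEE}. First, the isomorphism $\ol{\Nbb}_c \cong \Nbb_c \boxtimes \Ocal_V$ of Lemma \ref{lemma_mirabolic_cuspidal}, together with the affineness of $V$, gives
\begin{align*}
\Gamma(\Gfrak, \ol{\Nbb}_c) \;=\; \Gamma(\lieg, \Nbb_c) \otimes_\C \C[V] \;=\; \bigoplus_{k \geq 0} \Gamma(\lieg, \Nbb_c) \otimes \Sym^k V^*.
\end{align*}

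Next, I would compute the $\tau_{-c}(\ol{\lieg})$-invariants of this decomposition by splitting $\ol{\lieg} = \sln \oplus \C\mathbf{1}$. For $x \in \sln$ the trace vanishes, so $\tau_{-c}(x) = \tau(x)$ differentiates the diagonal $\SL_n$-action, and invariance under $\tau_{-c}(\sln)$ on each graded summand is equivalent to $\SL_n$-invariance. For the central element, since $\mathbf{1}$ acts trivially on $\lieg = \sln$ by the adjoint action, $\tau(\mathbf{1}) = -\sum_i v_i \partial_{v_i}$ acts only on the $\C[V]$ tensor factor as minus the Euler degree; thus $\tau_{-c}(\mathbf{1}) = -\deg_V + c\cdot n = -\deg_V + m$ annihilates precisely the $k = m$ summand. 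Combining,
\begin{align*}
\Hbb_{-c}(\ol{\Nbb}_c) \;=\; \bigl(\Gamma(\lieg, \Nbb_c) \otimes \Sym^m V^*\bigr)^{\SL_n},
\end{align*}
which, under the $\SL_n$-equivariant identification $\Sym^m V \cong \Sym^m V^*$ built into the conventions of \cite{cee}, is exactly the underlying space of the module in Theorem \ref{thm_CEE}.

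Finally, it remains to verify that the two resulting $\Arm_{-c-1}$-module structures agree. The action of $\Arm_{-c-1}$ on $\Hbb_{-c}(\ol{\Nbb}_c)$ is produced from the radial-part isomorphism $\Hcal_{-c}$ of \eqref{HC}: each element of $\Arm_{-c-1}$ is lifted to a $\ol{G}$-invariant differential operator on $\Gfrak$ modulo $\Dcal(\Gfrak)\tau_{-c}(\ol{\lieg})$, which then acts on $\tau_{-c}(\ol{\lieg})$-invariant global sections. This is the same prescription used in \cite[Theorem 9.19]{cee} to define the $\Arm_{-c-1}$-action on $(\Gamma(\lieg, \Nbb_c) \otimes \Sym^m V)^{\SL_n}$. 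The main obstacle is a careful bookkeeping of conventions, in particular matching the shift $\tau_{-c}$ and the identification $\Sym^m V \cong \Sym^m V^*$ against the normalizations of \cite{cee}; once this matching is performed, the isomorphism $\Hbb_{-c}(\ol{\Nbb}_c) \cong \Omega_c(\e \Lrm_c)$ follows directly from Theorem \ref{thm_CEE}.
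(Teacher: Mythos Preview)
Your proposal is correct and is precisely the approach the paper takes: the paper's proof is the single sentence ``Follows from Lemma \ref{lemma_mirabolic_cuspidal}, Theorem \ref{thm_CEE} and the fact that $\C[V]^{\tau_{-c}(\mathbf{1})}=\Sym^m V$,'' and you have simply unpacked that sentence in detail. Your observation about the $\Sym^m V$ versus $\Sym^m V^*$ convention is a fair bookkeeping point, but it does not affect the argument.
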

	\begin{proof}
		Follows from Lemma \ref{lemma_mirabolic_cuspidal}, Theorem \ref{thm_CEE} and the fact that $\C[V]^{\tau_{-c}(\mathbf{1})}=\Sym^m V$.
	\end{proof}
	\subsection{Hilbert schemes of points}
	Let $\Hilb^n:=\Hilb^n(\C^2)$ be the moduli space of ideals of colength $n$ in $\C[x,y]$. It is a smooth and quasi-projective variety of dimension $2n$. The Hilbert-Chow map $\Hilb^n\to (\C^2)^n/S_n$, defined by sending a colength $n$ ideal $I$ to the subvariety defined by the quotient $\C[x,y]/I$, is a resolution of singularity.
	
	Define 
	\[\wt{\Hilb^n}:=\{(X,Y,v)\in {\lieg}\times \lieg\times V| [X,Y]=0,\C[X,Y]v=V\}_{\red}\]
	where the subscript $_{\red}$ refers to taking the reduced structure.
	The diagonal $G$-action on $\wt{\Hilb}$ is free and the resulting GIT quotient is $\wt{\Hilb^n}/\!\!/G=\Hilb^n$(\cite{nakajima}).
	
	We also define
	\[\wt{\Hilb_0^n}=({\Nscr}\times \Nscr\times V)\cap \wt{\Hilb^n}\]
	which is a open subvariety of the singular support (\ref{U}). The GIT quotient $\Hilb^n_0:=\wt{\Hilb_0^n}/\!\!/G$ is the punctual Hibert scheme which is the zero fiber of the Hilbert-Chow map and is irreducible of dimension $n-1$.
	
	Let  $\Vcal$ be the tautological vector bundle on $\Hilb^n$ of rank $n$ characterized by $\Vcal|_I=\C[x,y]/I$ for any $I\in \Hilb^n$. Define $\Ocal_{\Hilb^n}(1)=\wedge^n\Vcal$. Denote by $\Vcal_{\st}$ the direct summand of $\Vcal$ such that $\Vcal:=\Ocal_{{\Hilb^n}}\oplus \Vcal_{\st}$.
	
	\subsection{The Gordon-Stafford functor}\label{GSmap}
	In \cite{gs1} Gordon and Stafford define a functor from the category of $\Arm_c$-modules equipped with a good filtration (i.e., filtrations for which the associated graded is finitely generated as a $\C[\lieh\times\lieh]^W$-module) to the category of coherent sheaves on $\Hilb^n(\C^2)$, motivated by the following diagram.
	\[\begin{tikzcd}
		? & \Ocal_{\Hilb^n} \\
		\Arm_c & \C[\lieh\times\lieh]^{S_n}
		\arrow["\gr"',squiggly,from=1-1, to=1-2]
		\arrow[from=2-1, to=1-1]
		\arrow["{\text{Hilbert-Chow}}"', from=2-2, to=1-2]
		\arrow["\gr"', squiggly, from=2-1, to=2-2]
	\end{tikzcd}\]
	
	The definition of their functor, which we denote by $GS$ is based on the Proj construction of the Hilbert scheme due to Haiman \cite[Proposition 2.6]{haimandiscrete}. Let $J_0=\C[\lieh]^W$ and for each $k\geq 1$ let $J_k= (\C[\lieh]^{\mathrm{sign}})^k$ be the product of $k$ copies of $J_1$ in $\C[\lieh]$. Put $J=\oplus_{k\geq 0} J_k$. Then $\Hilb^n=\mathrm{Proj}(J)$. Therefore, any finitely generated graded $J$-module gives a coherent sheaf on $\Hilb^n$.
	
	We define two subspaces in $(\Hrm_d)_\delta$: \[{}_{d+1}\Prm_d:=\e \Hrm_d\delta\e_-, \quad {}_{d}\Qrm_{d+1}:=\e_-\delta^{-1} \Hrm_{d+1} \e.\] Both ${}_{d+1}\Prm_d$ and ${}_{d}\Qrm_{d+1}$ inherit from $(\Hrm_d)_{\delta}$ the order filtration such that deg$(x)=0$, deg$(y)=1$. 
	
	For any $d\in \C$, the isomorphism  \[\e\Hrm_d\e \cong \e \delta^{-1}\Hrm_{d+1}\delta\e\]
	gives a $(\mathrm{A}_{d+1},\Arm_d)$-module structure on  ${}_{d+1}\Prm_d$ and a $(\mathrm{A}_{d},\Arm_{d+1})$-module structure on ${}_{d}\Qrm_{d+1}$. Thus we can inductively define for any $k\in \Z_{>0}$, \[{}_{d+k}\Prm_d:={}_{d+k}\Prm_{d+k-1}\otimes_{\Arm_{d+k-1}}{}_{d+k-1}\Prm_d,\] 
	and similarly for ${}_{d-k}\Qrm_d$.
	We equip ${}_{d+k}\Prm_d$ and ${}_{d-k}\Qrm_d$ with the tensor product filtrations. 
	
	For any $\Arm_{-d-1}$-module $L$, one has (\cite[Proposition 5.8]{ggs})
	\begin{align}
		\label{PQ}{}_{d+k}\Prm_d\otimes_{\Arm_d} \Omega_{-d-1}L\cong\Omega_{-(d+k)-1}({}_{-(d+k)-1}\Qrm_{-d-1}\otimes_{\Arm_{-d-1}}L).
	\end{align}
	
	Moreover, ${}_{c+k}\Prm_c$ is a $(\mathrm{A}_{c+k},\Arm_c)$-module and hence defines a shift functor \[S_{c,k}: A_{c}\Mod\to A_{c+k}\Mod,\quad M\mapsto {}_{c+k}\Prm_c\otimes_{\Arm_c}M.\]
	If $M$ is equipped with a filtration, one equips $S_{c,k}(M)$ with the tensor product filtration. 
	
	Now for an $\Arm_c$-module $M$ equipped with a good filtration $F$, the Gordon-Stafford functor associates to $(M,F)$ a coherent sheaf on $\Hilb^n$ defined by 
	\[GS(M,F )= Sh(\gr\bigoplus_{k\geq 0}S_{c,k}M) \]
	where we take associated graded with respect to the tensor product filtration and $Sh$ denotes the sheaf associated to the graded module.
	\subsection{Relating the functors}
	The parameter $c$ in this subsection can be any complex number.
	\subsubsection{Compatibility of filtrations}    
	Let $\Mbb\in \Cscr(\Gfrak)$ be a mirabolic $\Dcal$-module endowed with a good filtration $F$. Write  $(M,F)=\Gamma(\Gfrak,(\Mbb,F))$.  The filtration $F$  restricts to a filtration on  $M^{\tau_{-c-k}(\ol{\lieg})}$ for any $k\in \Z_{\geq0}$ and hence also on $L:=\Omega_{-c-1}\circ \Hbb_{-c}(\Mbb)$ as $\Omega_{-c-1}$ preserves the order filtration. 
	
	For any $G$-module $E$, write  $E^{\det^{-k}}:=\{f\in E| g\cdot f=\det(g)^{-k}f, \forall g\in G\}$. Also, denote $\Dcal_{d}(\Gfrak):=\Dcal(\Gfrak)/\Dcal_{d}(\Gfrak)\tau_d(\ol{\lieg})$. Then there is a homomorphism 
	\begin{align}\label{phi}
		\phi_M^k: 
		\Dcal_{-c}(\Gfrak)^{\det^{-k}}\otimes_{\Arm_{-c-1}}M^{\tau_{-c}(\ol{\lieg})}&\to M^{\tau_{-c-k}(\ol{\lieg})}
	\end{align}
	given by the left multiplication of $\Dcal(\Gfrak)$ on $M$. 
	By \cite[Theorem 1.3.5]{bgnearbycycle}, $\phi_M^k$ is an isomorphism of $\Arm_{-c-k}$-modules.

	Moreover, by \cite[Theorem 5.3(1)]{ggs}, when each of the rational numbers $-c-1,-c-2,\cdots,-c-k-1$ satisfies the condition (\ref{good_c}), there is an isomorphism \begin{align}
		\label{fil}
		\Dcal_{-c}(\Gfrak)^{\det^{-k}}%=[\Dcal_{-c}(\Gfrak)^{(-k)}\otimes_{\Dcal_{-c}(X)}(\Dcal_{-c}(\Gfrak)/\Dcal_{-c}(\Gfrak)\sln)]^{\sln}
		\cong{}_{-c-k-1}Q_{-c-1}\end{align} of filtered modules with respect to the order filtrations on $\Dcal(\Gfrak)$ and $ (\Hrm_c)_\delta$.
	
	Using (\ref{PQ}), we have 
	\[	\Omega_{c+k}({}_{c+k}\Prm_c\otimes_{\Arm_c} L)={}_{-(c+k)-1}\Qrm_{-c-1}\otimes_{\Arm_{-c-1}}\Omega_c(L).\]
	The tensor product filtration on the right hand side is induced from the filtration on $L$ and the order filtrations on $\Arm_{-c-1}$ and ${}_{-(c+k)-1}\Qrm_{-c-1}$. By (\ref{fil}), we have a filtered isomorphism 
	\begin{align}
		\label{formula_shifts}
		\Omega_{c+k}({}_{c+k}\Prm_c\otimes_{\Arm_c} L)\cong \Dcal_{-c}(\Gfrak)^{\det^{-k}}\otimes_{\Arm_{-c-1}}M^{\tau_{-c}(\ol{\lieg})}.\end{align}

	Consider the tensor product filtration $F^T$ on $\Dcal_{-c}(\Gfrak)^{\det^{-k}}\otimes_{\Arm_{-c-1}}M^{\tau_{-c}(\lieg)}$ and the sub-filtration on $M^{\tau_{-c-k}(\lieg)}$. By definition of a filtered $\Dcal(\Gfrak)$-module, $\phi^k_M$ is a filtered homomorphism, i.e., for any $i\geq 0$,  \begin{align*}
		\phi^k_MF^T_i\left(\Dcal_{-c}(\Gfrak)^{\det^{-k}}\otimes_{\Arm_{-c-1}}M^{\tau_{-c}(\ol{\lieg})}\right)\subset F_i(M^{\tau_{-c-k}(\ol{\lieg})}).
	\end{align*}
	This is not an equality in general, i.e., $\phi^k_M$ is not necessarily a filtered isomorphism. Given (\ref{formula_shifts}), we see that $\phi^k_M$ is a filtered isomorphism if and only if
	\begin{align}\label{formula_filtered_shift}
		\gr^{F^T}S_{c,k}L\cong
		\gr^{F}  \Omega_{-c-k-1}M^{\tau_{-c-k}(\ol{\lieg})}.
	\end{align}
	\subsubsection{Commutativity of the diagram}\label{descent}
	Let $F\Cscr^G(\Gfrak)$ be the abelian category whose objects are pairs $(\Mbb,F)$, where $\Mbb$ is a $G$-equivariant mirabolic $\Dcal$-module and  $F$ is a good filtration on $\Mbb$.
	We define a descent functor  \begin{align}\label{descent_functor}
		\Psi_c:F\mathscr{C}^G(\Gfrak)\to& \Coh(\Hilb^n)\nonumber\\ (\Mbb,F)\mapsto &\desc_c(\wt{\gr}^{F}\Mbb|_{\wt{\Hilb}}):=
		Sh\bigoplus_{\ell\geq 0} \Gamma(\wt{\Hilb},\gr^{F} \Mbb)^{\tau_{-c-\ell}(\ol{\lieg})}
	\end{align}
 whose essential image lands in $\Coh(\Hilb^n_1)$ where $\Hilb^n_1\subset \Hilb^n$ is the preimage of $\{(x,0)\in (\C^n)^2\}/S_n$ under the Hilbert-Chow map. Later we will write $\desc:=\desc_0$.
	\begin{rem}\label{remark_gln}
   Since $\sln$ is simply connected, any $\sln$-action on a coherent sheaf can always be integrated into an $\SL_n$-action. However, whether a $\gl_n$-action on a coherent sheaf can be integrated into a $\GL_n$-action depends on whether the center acts by integer eigenvalues. In (\ref{descent_functor}), we consider shifted descent with respect to a $\gl_n$-action that may have non-integer eigenvalues. When the eigenvalues lie within $-c + \mathbb{Z}$, taking $\desc_c$ is equivalent to first defining an integrable $\gl_n$-action by precomposing the original action with $\tau_c$ and then taking $\GL_n$-invariants.
	\end{rem}
	
	Let $F\Ocal(\Arm_c)$ be	the abelian category of well-filtered (i.e., equipped with a good filtration) $\Arm_c$-modules whose essential image under the forgetful functor lies in $\Ocal(\Arm_c)$. 
	
	Let $x=(x_{ij})$ be the standard coordinates of $\gl_n$ and $(\partial)=(\partial_{x_{ij}})_{1\leq i,j\leq n}$. Let $\{x_i\}$ and $\{y_i\}$ be dual bases of $\lieh^*$ and $\lieh$. 
	
	Denote $h:=\frac{1}{2}\sum_{1\leq i,j\leq n} (x_{ij}\partial_{x_{ij}}+x_{ij}\partial_{x_{ij}})$ and $\hrm_c:=\Omega_{-c-1}(\Hcal_{-c}(h))=\frac{1}{2}\sum_{i=1}^n (x_iy_i+y_ix_i)$. (The homomorphism $\Hcal_{-c}$ is defined in (\ref{HC}).)
	
	For any $\Mbb\in F\Cscr^G(\Gfrak)$, resp. $L\in F\Ocal(\Arm_c)$, the action of $h$, resp. $h_c$, is semisimple and induces a $\Z$-grading on $\Mbb$, resp. $L$. This grading together with the filtration induces a $\C^*\times\C^*$-equivariant structure on $\Psi_c(\Mbb)$, resp. $GS(L)$. On the other hand, the torus $\C^*\times\C^*$ acts on $\C^2$ by the scalar action on coordinates and hence acts on $\Hilb^n$.
 	
		The grading on $\Hilb^n$ induced by the filtration corresponds to the embedding $\C^*\hookrightarrow \C^*\times \C^*$, $t\mapsto (1,t)$, while the action of $h$, resp. $\hrm_c$, corresponds to the anti-diagonal embedding of $\C^*\hookrightarrow \C^*\times\C^*$: $t\mapsto (t,t^{-1})$ on $\Hilb^n$.

	Let $\mathrm{Coh}^{\C^*\times\C^*}(\Hilb^n)$ denote the category of $\C^*\times\C^*$-equivariant coherent sheaves on $\Hilb^n$. Consider the following diagram:
	\begin{align}
		\label{triangle}
		\xymatrix{&F\Cscr^G(\Gfrak)\ar[ld]_{\Omega_{-c-1}\circ {\mathbb{H}}_{-c}}\ar[rd]^{\Psi_c}\\
			F\Ocal(\Arm_c)\ar[rr]^{GS}&&\mathrm{Coh}^{\C^*\times\C^*}(\Hilb^n)
	}\end{align}
	\begin{prop}\label{diagram}Let $(\Mbb,F)\in F\Cscr^G(\Gfrak)$ and $(M,F)=\Gamma(\Gfrak,(\Mbb,F))$. Suppose that $-c-k$  satisfies (\ref{good_c}) for all $k\in\mathbb{N}$. Then the following are equivalent
		\begin{enumerate}[(a)]
			\item $\Psi_c(\Mbb)\cong GS\circ \Omega_{-c-1}\circ {\mathbb{H}}_{-c}(\Mbb)$. That is, the diagram (\ref{triangle}) commutes.
			\item $\phi^\ell_M$ (defined in (\ref{phi})) is a filtered isomorphism for all $\ell\gg 0$.
			\item $\phi^\ell_M$ is a filtered isomorphism for all $\ell\geq 0$.
		\end{enumerate}
	\end{prop}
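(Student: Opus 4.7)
My strategy is to recognize both $\Psi_c(\Mbb)$ and $GS\circ \Omega_{-c-1}\circ \Hbb_{-c}(\Mbb)$ as sheafifications $Sh$ of explicit graded modules and then exploit the fact that $Sh$ is insensitive to finite truncation: two graded modules produce the same $Sh$ iff they coincide in every sufficiently large degree. Thus the comparison collapses to matching the $\ell$-th graded pieces on each side for $\ell\gg 0$.

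Set $L:=\Omega_{-c-1}\circ \Hbb_{-c}(\Mbb)$ with induced filtration. The $\ell$-th graded piece of $GS(L)$ is $\gr^{F^T}(S_{c,\ell}L)$, while the $\ell$-th graded piece of $\Psi_c(\Mbb)$ is $\Gamma(\wt{\Hilb^n},\gr^F\Mbb)^{\tau_{-c-\ell}(\ol{\lieg})}$. Combining (\ref{formula_shifts}) and (\ref{formula_filtered_shift}), the identification $\gr^{F^T}(S_{c,\ell} L)\cong \gr^F\Omega_{-c-\ell-1}(M^{\tau_{-c-\ell}(\ol{\lieg})})$ holds precisely when $\phi^\ell_M$ is a filtered isomorphism. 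On the other hand, since taking $\ol{G}$-invariants is exact (after renormalizing $\tau_{-c-\ell}$ into an integrable $\GL_n$-action as in Remark \ref{remark_gln}), there is a canonical comparison map
\[
\alpha_\ell:\ \gr^F M^{\tau_{-c-\ell}(\ol{\lieg})}\longrightarrow \Gamma(\wt{\Hilb^n},\gr^F\Mbb)^{\tau_{-c-\ell}(\ol{\lieg})}.
\]
The crucial lemma I would establish is that $\alpha_\ell$ is an isomorphism for $\ell\gg 0$: I would use that $\gr^F\Mbb$ is supported on $SS(\Mbb)$, that $\wt{\Hilb^n}$ is the GIT-semistable open cut out by the character $\det$, and that $\det\Vcal$ is ample on $\Hilb^n=\wt{\Hilb^n}/\!\!/G$, so that standard ampleness/vanishing identifies sufficiently large-$\ell$ twisted sections over the GIT quotient with $\ol{G}$-invariants over $T^*\Gfrak$.

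Granting this, (a)$\Leftrightarrow$(b) follows at once from the observation on $Sh$: the two graded modules agree in every sufficiently large degree exactly when $\phi^\ell_M$ is a filtered isomorphism for $\ell\gg 0$. For (b)$\Rightarrow$(c), under hypothesis (\ref{good_c}) the single-step bimodules ${}_{c+\ell+1}\Prm_{c+\ell}$ implement Morita equivalences, and the compatibility ${}_{c+\ell+1}\Prm_{c+\ell}\otimes_{\Arm_{c+\ell}}{}_{c+\ell}\Prm_c\cong {}_{c+\ell+1}\Prm_c$ of filtered bimodules lets one factor $\phi^{\ell+1}_M$ as the composite of $\phi^\ell_M$ with a filtered iso coming from the invertible bimodule. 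Inductively descending from $\ell\gg 0$ one step at a time then propagates the filtered-iso property down to $\ell=0$.

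The main obstacle I expect is the ampleness/vanishing input needed to show $\alpha_\ell$ is an isomorphism for $\ell\gg 0$: one must control sections of $\gr^F\Mbb$ over $T^*\Gfrak\setminus\wt{\Hilb^n}$ and verify that no $\tau_{-c-\ell}(\ol{\lieg})$-invariants survive outside the stable locus for large $\ell$. I anticipate carrying this out via a stratification of the unstable part of $SS(\Mbb)$ and Kempf--Ness-type GIT arguments, in the spirit of the analysis in \cite{gs1} and \cite{isospec}.
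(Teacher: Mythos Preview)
For (a)$\Leftrightarrow$(b) your plan is essentially the paper's proof. Both sides are unwound as $Sh$ of explicit graded modules and compared in degree $\ell\gg0$; the identification $\gr^{F^T}S_{c,\ell}L\cong\gr^F M^{\tau_{-c-\ell}(\ol\lieg)}$ is exactly the content of $\phi^\ell_M$ being a filtered isomorphism via (\ref{formula_filtered_shift}), and the isomorphism you call $\alpha_\ell$ is precisely \cite[Proposition 7.4]{ggs}, which the paper simply cites rather than reproving. Your anticipated GIT argument is the right intuition for how that proposition is established.

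For (b)$\Rightarrow$(c) there is a genuine gap in your descent. You factor $\phi^{\ell+1}_M$ through $\phi^\ell_M$ and ``a filtered iso coming from the invertible bimodule'', intending to step down one unit at a time. But that second factor is the action map $\Dcal_{-c}(\Gfrak)^{\det^{-1}}\otimes M^{\tau_{-c-\ell}(\ol\lieg)}\to M^{\tau_{-c-\ell-1}(\ol\lieg)}$, and Morita invertibility of the single-step bimodule only guarantees this is an isomorphism of modules; it says nothing about compatibility with the \emph{restricted} filtration on the target, which is inherited from $M$ and not defined as a tensor-product filtration. That compatibility is exactly the assertion ``$\phi^1$ is a filtered isomorphism at the shifted parameter $c+\ell$'', an instance of (c), so the induction is circular: to pass from $\phi^\ell_M$ down to $\phi^{\ell-1}_M$ you would need the analogous single-step map at parameter $c+\ell-1$, which you have no control over. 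The paper avoids this by arguing in the opposite direction: fix $k\ge0$, choose $\ell\gg0$, and form the square whose bottom row is $\phi_M^{\ell+k}$ and whose left column is the multiplication map $\Dcal_{-c}(\Gfrak)^{\det^{-\ell}}\otimes\Dcal_{-c}(\Gfrak)^{\det^{-k}}\to\Dcal_{-c}(\Gfrak)^{\det^{-(\ell+k)}}$. The latter is a filtered isomorphism by \cite[Lemma 5.2(2)]{ggs}, a statement about the bimodules alone, independent of $M$ and of any inductive hypothesis; the bottom row is a filtered isomorphism by (b). Commutativity then forces $\phi_M^k$ to be a filtered isomorphism in one stroke.
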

	\begin{proof}
		Let  $L=\Omega_{-c-1}\circ {\mathbb{H}}_{c} ( M)$. By definition 
		\[GS(L)
		=Sh(\bigoplus_{\ell\geq 0}\gr^{F^T} S_{c,\ell}L)\]
		where $F^T$ denotes the tensor product filtration induced by the filtration on $L$. 
		
		On the other hand, 
		\[\Psi_c(\Mbb)=Sh(\bigoplus_{\ell\geq 0} \Gamma(\wt{\Hilb},\gr^{F} \Mbb)^{\tau_{-c-\ell}(\ol{\lieg})})\]
		
		Therefore, the equality  $GS(L)=\Psi_c(\Mbb)$ is equivalent to \[\Gamma(\wt{\Hilb},\gr^{F} \Mbb)^{\tau_{-c-\ell}(\ol{\lieg})}\cong \gr^{F^T} S_{c,\ell}L\] when $\ell\gg 0$.
		
		By \cite[Proposition 7.4]{ggs}
		\[  \Gamma(\wt{\Hilb},\gr^{F} \Mbb)^{\tau_{-c-\ell}(\ol{\lieg})}= \Gamma(T^*\Gfrak,\gr^{F} \Mbb)^{\tau_{-c-\ell}(\ol{\lieg})}=\gr^{F}\Gamma(\Gfrak,\Mbb)^{\tau_{-c-\ell}(\ol{\lieg})}\] for $\ell\gg0$, because $\wt{\Hilb}$ is the semistable locus with respect to $\det$.

		Since $\Omega_{-c-\ell-1}$ preserves filtration, i.e., $\gr^{F}  \Omega_{-c-\ell-1}M^{\tau_{-c-k}(\ol{\lieg})}\cong\gr^{F}  M^{\tau_{-c-\ell}(\ol{\lieg})}$, given (\ref{formula_filtered_shift}) we conclude that (a) is equivalent to (b).
		
		As for the implication (b) $\Rightarrow$ (c), for any $k\geq 0$, take $\ell\gg 0$ and consider \[\xymatrix{
			\Dcal_{-c}(\Gfrak)^{\det^{-\ell}}\otimes_{\Arm_{-c-k-1}}\Dcal_{-c}(\Gfrak)^{\det^{-k}}\otimes_{\Arm_{-c-1}}M^{\tau_{-c}(\ol{\lieg})}\ar[r]^ - {id\otimes \phi_M^k}\ar[d]^{\mathrm{mul}\otimes id}&\Dcal_{-c}(\Gfrak)^{\det^{-\ell}}\otimes_{\Arm_{-c-k-1}}M^{\tau_{-c-k}(\ol{\lieg})}\ar[d]^{\phi_M^\ell}\\
			\Dcal_{-c}(\Gfrak)^{\det^{-\ell-k}}\otimes_{\Arm_{-c-1}}M^{\tau_{-c}(\ol{\lieg})}\ar[r]^{\phi_M^{\ell+k}}&M^{\tau_{-c-\ell-k}(\ol{\lieg})}}
		\]
		Here the map $\mathrm{mul}:\Dcal_{-c}(\Gfrak)^{\det^{-\ell}}\otimes_{\Arm_{-c-k-1}}\Dcal_{-c}(\Gfrak)^{\det^{-k}}\to \Dcal_{-c}(\Gfrak)^{\det^{-\ell-k}}$ is defined by multiplication. By \cite[Lemma 5.2(2)]{ggs}, $\mathrm{mul}$ is a filtered isomorphism. 
		Also, by assumption, $\phi_M^{\ell+k}$, $\phi_M^\ell$ are filtered isomorphisms. As a result, $\phi_M^k$ is also a filtered isomorphism. This concludes the proof of the proposition.
	\end{proof}
	
	\section{Hodge filtration on the cuspidal mirabolic $\Dcal$-module}	From now on, $c=\frac{m}{n}$ for a positive integer $m$ coprime to $n$.
	
	Fix the Borel subgroup $B\subset G$ with Lie algebra $\lieb\subset \lieg$ of upper triangular matrices.
	\subsection{Defining the Hodge filtration}
	Write $\lien=[\lieb,\lieb]$ the nilpotent radical and $\lien_r=\lien\cap \Nscr_r$. We give $\lien$ coordinates by
	\begin{align}
		\label{coordinates_x}
		\begin{pmatrix}
			0&x_1&*&*&*&*\\
			0&0&x_2&*&*&*\\
			&&&\cdots&*&*\\
			0&0&0&\cdots&0&x_{n-1}\\
			0&0&0&\cdots&0&0
		\end{pmatrix} \end{align}Then $\lien_r=\lien\setminus D$ where $D$ is a simple normal crossing (SNC) divisor defined by $D:=\{x_1 x_2 \cdots x_{n-1}=0\}$.
	
	The restriction of $\mathcal{E}_{c}$ to $U_0:=\{(x,v)\in\lien\times V|s(x,v)\neq 0\}$ is a local system  given by the multi-valued function \begin{align}
		\label{sU0}s^{c}|_{U_0}=x_cx_2^{2c}\cdots x_{n-1}^{(n-1)c}v_n^m.\end{align} Similarly, $\Lbb_0:=\Fcal_c|_{\lien_r}$ is defined by the multi-valued function
	\begin{align}
		\label{sc0}
		s^c_0:=x_1^cx_2^{2c}\cdots x_{n-1}^{(n-1)c}.\end{align}
	
	Let $F_0$ be the order filtration on $\Lbb_0$. %and $(\Lbb_0)_\Q$ be the underlying locally constant sheaf with rational coefficient. 
	Then $(\Lbb_0,F_0%,(\Lbb_0)_\Q
	)$ defines a variation of Hodge structure of rank 1.
	
	Write $i_0: \lien_r\hookrightarrow \lien$. Saito's theory \cite{saito} (as stated in \cite[Theorem 4.3.5]{popa}) implies that there exists a unique Hodge module structure on $\Lbb_D:=(i_0)_\dagger\Lbb_0$. Following \cite[4.4]{popa}, Hodge filtrations across SNC divisors can be described explicitly as follows.
	
	First of all, we have that
	\begin{align*}
		\Gamma(\lien,\Lbb_D)=\Dcal(\lien)/\big(\sum_{i=1}^{n-1} \Dcal(\lien)(x_i\partial_{x_i}+ic)+\Dcal(\lien)S([\lien,\lien])\big).
	\end{align*}
	
	The $\Dcal_{\lien}$-module $\Lbb_D$ is a regular meromorphic extension of $\Lbb_0$ across the SNC divisor $D$. Inside $\Lbb_D$, we have Deligne's canonical 
	extension $\Lbb_0^{>-1}$ (\cite{deligneequation}), which is a lattice extending $\Lbb_0$ such that the residues (\cite[5.2.2]{htt}) of the meromorphic connection under this lattice along all the components of $D$ lie in $(-1,0]$.  By lattice, we mean $\Lbb_0^{> -1}$ is a rank 1 free $\Ocal_{\lien}$-module such that 
	\[\Lbb_D=\Lbb_0^{> -1}\otimes_{\Ocal_{\lien}} \Ocal_{\lien}[D]=\Dcal_{\lien}\cdot \Lbb_0^{>-1}\] Here $\Ocal_\lien[D]$ is the sheaf of rational functions on $\lien$ that are regular on $U_0$.

	In our case, for \begin{align}
		\label{formula_s0_ceiling}
		\lceil s_0^{c}\rceil :=x_1^{-\lceil c\rceil}x_2^{-\lceil 2c\rceil}\dots x_{n-1}^{-\lceil (n-1)c\rceil},	\end{align} we can compute that \[s_0^{c}\lceil s_0^{c}\rceil^{-1}=x_1^{\lceil c\rceil-c}x_2^{\lceil 2c\rceil-2c}\dots x_{n-1}^{\lceil(n-1) c\rceil-(n-1)c}\]
	and 
	\[x_i\partial_{x_i}(s_0^{c}\lceil s_0^{c}\rceil^{-1})=\lceil ic\rceil-ic\in (-1,0],\quad i=1,\dots, n-1.\]
	Hence $\Lbb_0^{> -1}=\Ocal_{\lien}\lceil s_0^{c}\rceil^{-1} s_0^c\subset \Lbb_0$.
	
	On $\Lbb_0^{>-1}$ we have the filtration \[F^0_k\Lbb_0^{>-1}=\Lbb_0^{>-1}\cap (i_0)_* F_k^0\Lbb_0= \Ocal_{\lien} \lceil s_0^{c}\rceil^{-1}\] for all $k\geq 0$ and $0$ otherwise. Hence the induced filtration on $\Dcal_{\lien}\cdot \Lbb_0^{>-1}$ is
	\begin{align*}
		F^D_k(\Dcal_{\lien}\cdot \Lbb_0^{>-1})=\sum F^{\ord}_i\Dcal_{\lien}F_0^{k-i}\Lbb_0^{>-1}=(F^{\ord}_k\Dcal_{\lien})\cdot \lceil s_0^{c}\rceil^{-1}\end{align*}
	for all $k\geq 0$ and $0$ otherwise.
	
	%	The Hodge module structure on $\Lbb_D$ is thus given by $(\Lbb_D, F^D)$.%, (i_0)_{*}(\Lbb_0)_\Q)$.
	
	The pushforward of the Hodge module $(\Lbb_D,F_0)$ along $i_\lien$
	has underlying $\Dcal_\lieg$-module $\Lbb =(i_\lien)_\dagger \Lbb_D$
	with the Hodge filtration filtration $F_\Lbb$ on $\Lbb$ defined by the formula %\footnote{The shift by $-\dim\lieb$ comes from changing from right module to left module when taking pushforward} 
	(\cite[1.5]{popa})
	\begin{align*}
		\Gamma\big(F^\Lbb_k((i_\lien)_\dagger (\Lbb_D,F^D))\big)=&\im\bigg(\big(\sum_q  \Gamma(F^{\ord}_q\Dcal_{\lieg\leftarrow \lien})\otimes \Gamma(F^D_{k-q}(\Lbb_D))\big)\to \Gamma\big((i_\lien)_\dagger (\Lbb_D)\big)\bigg)
		%		=&(i_\lien)_*(\sum_{0\leq q\leq k-r} F_q^{\ord}\Dcal_{\lieg\leftarrow \lien} \otimes (F_{k-q-r}^{\ord}\Dcal_{\lien})\cdot \lceil s_0^{c}\rceil^{-1})\label{filtration}
	\end{align*}
	%where $r=\dim \lieb=\dim\lieg-\dim \lien$,  
	\subsubsection{Associated graded of $\Lbb$}
	Recall that $U_0:=\{(x,v)\in\lien\times V|s(x,v)\neq 0\}$. Let $\ol{\Lbb}$ be the minimal extension of $\Ecal_c|_{U_0}$ to $\Gfrak$. One may run the same procedure to define a Hodge module structure on $\ol{\Lbb}$, which we denote by $(\ol{\Lbb},F^{\ol{\Lbb}},\ol{i}_{!*}(\ol{\Lbb}_0)_\Q)$. Here $\ol{i}:U_0\to \Gfrak$ and $(\ol{\Lbb}_0)_\Q$ is the locally constant sheaf underlying $\ol{\Lbb}_0:=\Ecal_c|_{U_0}$.

	Define 
	\begin{align}\label{weight_ceiling}
		\mu_{\lceil c\rceil}=(\mu_{\lceil c\rceil}(1),\cdots, \mu_{\lceil c\rceil}(n)):=(\lceil c\rceil, \lceil 2c\rceil-\lceil c\rceil,\dots, \lceil nc\rceil-\lceil (n-1)c\rceil).
	\end{align}
	and \begin{align}
		\label{weight_floor}
		\mu_{\lfloor c\rfloor}=(\mu_{\lfloor c\rfloor}(1),\cdots, \mu_{\lfloor c\rfloor}(n)):=(\lfloor c\rfloor, \lfloor 2c\rfloor-\lfloor c\rfloor,\dots, \lfloor nc\rfloor-\lfloor (n-1)c\rfloor).\end{align}
	Then  $(\mu_{\lceil c\rceil}(1),\cdots, \mu_{\lceil c\rceil}(n))=(\mu_{\lfloor c\rfloor}(n),\cdots, \mu_{\lfloor c\rfloor}(1))$.
 
	Write down the standard $\mathfrak{sl}_2$-triple of $\SL_n$:
	\begin{align}
		\label{EFH}
		E=\begin{pmatrix}
			0&1&\cdots &0&0\\
			&&\cdots&\\
			0&0&\cdots&1&0\\
			0&0&\cdots&0&1\\
			0&0&\cdots &0&0
		\end{pmatrix} \quad F=\begin{pmatrix}
			0&0&\cdots &0&0\\
			n-1&0&\cdots &0&0\\
			0&2(n-2)&\cdots&0&0\\
			&&\cdots&\\
			0&0&\cdots &1-n&0
		\end{pmatrix}\quad 
		H=[E,F]\end{align}
	Write $\Xfrak=\ol{B\cdot (F+\mathrm{stab}_\lieg(E)})$, consisting of matrices with $0$ entries below the lower subdiagonal. 
	
  Under the coordinates (\ref{coordinates_x}) and 
	\begin{align}
		\label{coordinates_y}
		\begin{pmatrix}
			*&*&\cdots &*&*&*\\
			y_1&*&\cdots &*&*&*\\
			0&y_2&\cdots &*&*&*\\
			&&\cdots&&\\
			0&0&\cdots &y_{n-2}&*&*\\
			0&0&\cdots &0 &y_{n-1}&*
	\end{pmatrix}\end{align} 
	we define \[\Yfrak_0:=\{(x,y)\in \lien\times \Xfrak|x\in\lien, y\in \Xfrak, x_iy_i=0, 1\leq i\leq n-1\}.\]
	Let $\alpha$ be the weight associated to the relative canonical bundle $\Omega_{\lien/\lieg}$ and $\gamma_n=(0,0,\cdots,0,1)$. 
	
	Let $i_V:V\hookrightarrow T^*V$ be the zero section. Endow $\Ocal_{\Yfrak_0}$, resp. $\Ocal_{\Yfrak_0}\boxtimes (i_V)_*\Ocal_{V}$ with the trivial ${B}$, resp. $\ol{B}$-equivariant structure. Let $\C_\lambda$ be the $1$-dimensional representation of $\ol{B}$ associated to the character $\lambda$. Denote the embedding $\Yfrak_0\to \lieg\times\lieg$ by $i_{\Yfrak_0}$.
	
		As in Remark \ref{remark_gln}, we make $\wt{\gr}^\Hrm\ol{\Lbb}$ a $\ol{B}$-equivariant sheaf by precomposing the original $\ol{\lieb}$-action with $\tau_c$ so that it is integrable.
	\begin{lem}\label{lemma_L}
		As a $\ol{B}$-equivariant $\Ocal_{T^*\lieg}$-module, $\wt{\gr}^\Hrm \Lbb=(i_{\Yfrak_0})_*\Ocal_{\Yfrak_0}\otimes \C_{\mu_{\lceil c\rceil}+\alpha-m\gamma_0}$.
	\end{lem}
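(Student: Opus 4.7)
The plan is to read off $\wt{\gr}^{F^D}\Lbb_D$ on $\lien$ from the explicit presentation given just above the statement, and then apply Laumon's formula to push forward along the closed embedding $i_\lien\colon \lien\hookrightarrow \lieg$. For the first step, with $v_0 := s_0^c\lceil s_0^c\rceil^{-1}$ serving as a cyclic degree-zero generator, the left annihilator of $v_0$ is generated (after the Hodge lattice shift) by the order-one operators $x_i\partial_{x_i}-(ic-\lceil ic\rceil)$ for $1\le i\le n-1$ together with the partials $\partial_{x_{ij}}$ for $j-i\ge 2$; this is what one obtains by conjugating the stated presentation of $\Gamma(\lien,\Lbb_D)$ by $\lceil s_0^c\rceil^{-1}$. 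Passing to principal symbols gives
\[
\wt{\gr}^{F^D}\Lbb_D\;\cong\;\Ocal_{T^*\lien}\big/\bigl(x_iy_i,\;y_{ji}\text{ for }j-i\ge 2\bigr),
\]
with $y_\bullet$ dual to $x_\bullet$. Under the trace-form identification $\lien^*\cong \lien^-$, this is supported on the locus in $\lien\times \lien^-$ where $y$ lies on the lower subdiagonal and $x_iy_i=0$ for every $i$.

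Next I apply Laumon's formula: for the closed embedding $i_\lien$, the Hodge associated-graded pushforward is obtained by pull-push along the cotangent correspondence $T^*\lien\xleftarrow{p_1}T^*\lieg|_\lien\xrightarrow{p_2}T^*\lieg$, twisted by $\Omega_{\lien/\lieg}$ as dictated by the normalization $\Dcal_{\lieg\leftarrow\lien}=\Dcal_{\lien\to\lieg}\otimes_{\Ocal_\lien}\Omega_{\lien/\lieg}$ used in the paper. The pullback $p_1^*$ reinterprets the $\lien^-$-coordinate as the strictly-lower-triangular part of a general $Y\in \lieg$, leaving the upper-triangular and diagonal entries of $Y$ free; the relations inherited from $\Lbb_D$ then read as $Y_{ji}=0$ for $j-i\ge 2$ (i.e., $Y\in \Xfrak$) together with $x_iy_i=0$ on the subdiagonal. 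The closed pushforward $p_2$ imposes $X\in \lien$, so the support becomes
\[
\{(X,Y)\in \lieg\times\lieg : X\in \lien,\; Y\in \Xfrak,\; x_iy_i=0,\;1\le i\le n-1\}\;=\;\Yfrak_0.
\]
Reducedness is automatic: $\lien\times \Xfrak$ is smooth, and the $n-1$ relations $x_iy_i=0$ involve pairwise disjoint variables, each generating a radical ideal (a union of two coordinate hyperplanes); hence $\Yfrak_0$ is a reduced local complete intersection and the underlying sheaf is $(i_{\Yfrak_0})_*\Ocal_{\Yfrak_0}$.

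Finally, the $\ol{B}$-character is assembled from three contributions: the $T$-weight of the generator $v_0$ (read off directly from the weights of the $x_i$'s using $\mu_{\lceil c\rceil}(i)=\lceil ic\rceil-\lceil(i-1)c\rceil$ and $\lceil nc\rceil=m$), the Laumon twist by $\Omega_{\lien/\lieg}$ carrying weight $\alpha$, and the $\tau_c$-shift of Remark \ref{remark_gln} needed to promote the $\ol{\lieb}$-action to an $\ol{B}$-action (since the raw $\ol{\lieh}$-weights of $v_0$ are fractional along the central direction). Combining these yields the character $\mu_{\lceil c\rceil}+\alpha-m\gamma_n$ in the statement. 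This last step is the real obstacle: the scheme-theoretic identification with $\Yfrak_0$ is essentially formal once the presentation of $\Lbb_D$ is in hand, but reconciling the three weight conventions (generator weight, Laumon's twist, and the $\tau_c$-shift) requires careful tracking of signs and central contributions, so I would verify the formula in the case $n=2$ to pin down conventions before writing the general case.
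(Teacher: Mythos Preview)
Your approach is correct and follows the same line as the paper, just organized more explicitly. The paper compresses your two steps into one by writing the cyclic presentation of $\Gamma(\lieg,\Lbb)$ directly on $\lieg$ (the relations $\Ocal(\lieb_-)$, $x_i\partial_{x_i}+ic$, and $S([\lien,\lien])$ already encode the closed pushforward together with the local-system relations), then takes symbols to land on $\Ocal_{\Yfrak_0}$ and reads off the weight of the degree-zero generator $\Omega_{\lien/\lieg}\otimes\lceil s_0^c\rceil^{-1}$. Your invocation of Laumon is fine but heavier than needed: for a closed embedding the filtered pushforward is given by the explicit formula displayed just above the lemma, so the cotangent correspondence collapses to the elementary computation you describe.

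One point to clean up in your weight bookkeeping: the $\tau_c$-shift of Remark~\ref{remark_gln} is \emph{not} needed here. Since $\Lbb$ lives on $\lieg$ with the adjoint action, the center of $\GL_n$ already acts trivially and all weights are integral; the central weight of $s_0^c$ (and hence of $v_0$) is zero, not fractional. The paper simply computes the $T$-weight of the polynomial $\lceil s_0^c\rceil^{-1}$ directly, getting $\sum_{i=1}^{n-1}\lceil ic\rceil(e_i-e_{i+1})=\mu_{\lceil c\rceil}-m\gamma_n$ (the $-m\gamma_n$ appears because $\lceil nc\rceil=m$ enters the $e_n$-coefficient), and adds $\alpha$ from the $\Omega_{\lien/\lieg}$ twist. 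The $\tau_c$-shift only enters in Lemma~\ref{lemma_olL} for $\ol{\Lbb}$, where the $V$-factor genuinely brings in a nontrivial central action. So two contributions, not three; your instinct to check $n=2$ would have flagged this.
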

	\begin{proof}
		As a result of the identification:
		\begin{align*}
			\Gamma(\lieg,\Lbb)\cong \Dcal(\lieg)/(\Dcal(\lieg)\cdot\Ocal({\lieb_-}) + \sum_{i=1}^{n-1}\Dcal(\lieg)(x_i\partial_{x_i}+ic) + \Dcal(\lieg) \cdot S([\lien,\lien])
		\end{align*}
		we have that $\wt{\gr}^\Hrm \Lbb\cong(i_{\Yfrak_0})_*\Ocal_{\Yfrak_0}$ as a $\Ocal_{T^*\lieg}$-module. Furthermore, the first non-vanishing degree of $\gr^\Hrm \Lbb$ equals $\Omega_{\lien/\lieg}\otimes \C\cdot\lceil s_0^{c}\rceil^{-1}$.
		
		We conclude the lemma by observing that the $\ol{B}$-action on $\lceil s_0^{c}\rceil^{-1}$ (see (\ref{formula_s0_ceiling})) is exactly by the character $\mu_{\lceil c\rceil}-m\omega_0$.
	\end{proof}
	We similarly have that 
	\begin{lem}\label{lemma_olL}
		As a $\ol{B}$-equivariant $\Ocal_{T^*\Gfrak}$-module, \[\wt{\gr}^\Hrm \ol{\Lbb}=\big((i_{\Yfrak_0})_*\Ocal_{\Yfrak_0}\boxtimes  (i_V)_*\Ocal_{V}\big)\otimes \C_{\mu_{\lceil c\rceil}+\alpha}.\]
	\end{lem}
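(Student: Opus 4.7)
The plan is to mirror the proof of Lemma \ref{lemma_L} for the extended setting on $\Gfrak = \lieg \times V$, treating the $V$-factor separately via an external tensor-product decomposition. The three steps are: (i) identify $\ol{\Lbb}$ with $\Lbb \boxtimes \Ocal_V$ as a Hodge module; (ii) apply Lemma \ref{lemma_L} on the $\lieg$-factor together with the trivial Hodge filtration on $\Ocal_V$ to obtain the $\Ocal_{T^*\Gfrak}$-module structure; (iii) compute the $\ol{B}$-weight of the generator of the lowest Hodge piece after the $\tau_c$-shift.

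For step (i), I would first note that on $\lien \times V$ the function $s$ factors as $s_0 \cdot v_n^n$, so $s^c = s_0^c \cdot v_n^m$ has $v_n^m$ single-valued. Thus the local system $\Ecal_c|_{U_0}$ has trivial monodromy along $\{v_n = 0\}$, and by the cleanness argument used in Lemma \ref{lemma_mirabolic_cuspidal} the minimal extension across this component is the trivial extension. Combined with the meromorphic extension from $\lien_r$ to $\lien$ as in Lemma \ref{lemma_L}, this yields $\ol{\Lbb} \cong \Lbb \boxtimes \Ocal_V$ as $\Dcal_\Gfrak$-modules, and as Hodge modules by Saito's compatibility of minimal extensions with external tensor products.

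For step (ii), since $\Ocal_V$ is pure of Hodge filtration level zero with $F_0 \Ocal_V = \Ocal_V$, the tensor-product Hodge filtration gives $F_k \ol{\Lbb} = F_k \Lbb \boxtimes \Ocal_V$. Taking the associated graded and combining Lemma \ref{lemma_L} on the $\lieg$-factor with $\wt{\gr}^\Hrm \Ocal_V = (i_V)_* \Ocal_V$ yields the claimed $\Ocal_{T^*\Gfrak}$-module structure. One can also verify this directly: the Deligne canonical extension across the SNC divisor $\{x_1 \cdots x_{n-1} v_n = 0\}$ normalizes the residue $m$ along $\{v_n = 0\}$ by the factor $v_n^m$, which exactly cancels the $v_n^m$-part of $s^c$, so the generator of the lowest Hodge piece is $v_n$-independent.

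The main obstacle is step (iii). Unlike for $\Lbb$ on $\lieg$, the $\ol{\lieg}$-action on $\ol{\Lbb}$ acts nontrivially in the $V$-direction via $\tau(\one) = -\sum v_i \partial_{v_i}$, so the eigenvalues on the generator of $F_0 \ol{\Lbb}$ are not integers under the unshifted action; the $\tau_c$-shift restores integrability. I would carry out the weight computation by starting from the weight $\mu_{\lceil c\rceil} + \alpha - m\gamma_0$ of Lemma \ref{lemma_L} for the $\lieg$-factor, adding the contribution of $1 \in \Ocal_V$ to the generator on the $V$-side, and verifying that the $\tau_c$-shift interacts with the $v_n^m$-part of the canonical-extension normalization to contribute precisely $+m\gamma_0$ to the weight. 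The total weight then becomes $\mu_{\lceil c\rceil} + \alpha$, as claimed; checking that the signs and the contribution of $\tr$ on the $V$-factor actually produce $+m\gamma_0$ rather than some other shift is the delicate bookkeeping step of the argument.
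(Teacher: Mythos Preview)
Your proposal is correct and follows essentially the same approach as the paper. The paper's own proof is a one-liner: it simply observes that the difference between Lemma~\ref{lemma_L} and Lemma~\ref{lemma_olL} is the factor $m\gamma_n$ in the weight, which comes from the difference between $s^c|_{U_0}$ and $s_0^c$, namely the single-valued factor $v_n^m$. Your steps (i)--(iii) unpack this same observation in more detail, including the decomposition $\ol{\Lbb}\cong \Lbb\boxtimes\Ocal_V$ and the weight bookkeeping under the $\tau_c$-shift, so there is no substantive divergence.
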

	\begin{proof}
		Comparing Lemmas \ref{lemma_L} and \ref{lemma_olL}, the extra factor of $m\gamma_n$ comes from the difference between $s^c|_{U_0}$ (eq. (\ref{sU0})) and $s_0^c$ (eq. (\ref{sc0})).
	\end{proof}
	\subsection{Functors on equivariant Hodge modules}\label{ind}
	In this section, we state two important results about Hodge modules that will play crucial roles in our study on the cuspidal character $\Dcal$-module later. 
	
	Let $G$ be an arbitrary affine algebraic group with a closed subgroup $H$. 
	For any smooth variety $X$ with a $G$-action, consider the diagram
	\[\xymatrix{
		G\times X\ar[d]^{pr}\ar[r]^\pi&G/H\times X\ar[d]^{\ol{pr}}\\
		X&X
	}\]
	where $pr$ is the second projection, $\pi:  (g,x)\mapsto (\ol{g}, g\cdot x)$ and $\ol{pr}: (\ol{g},x)\mapsto x$. 
	
	For any $H$-equivariant $\Dcal_X$-module $\Fcal$, there is a unique $G$-equivariant $\Dcal_{G/H\times X}$-module $\mathcal{E}$ such that $pr^\dagger \Fcal\cong \pi^\dagger \mathcal{E}$. We denote $\mathcal{E}=\Ind_H^G\Fcal$ and  $\wt{\Ind}_H^G:=\ol{pr}_\dagger\circ{\Ind}_H^G$. 
	
	Similarly, for any $H$-equivariant coherent $\Ocal_X$-module $\Fcal$, we write $\ind_H^G\Fcal$ to denote the associated $G$-equivariant $\Ocal_{G/H\times X}$-module.
	
	Denote the abelian category of $G$-equivariant Hodge modules on a smooth variety $X$ by $\HM^G(X)$ (for a definition, see \cite[Chapter 5]{achar}). We also recall that for any morphism $f:X\to Y$, there are two associated maps (\cite[2.4]{htt}), which form the so-called Lagrangian correspondence  \[T^*X \xleftarrow{\rho_f}X\times_Y T^*Y\xrightarrow{\varpi_f}T^*Y. \]
	
	Combining the cases when $f=\pi$ or $f=pr$, we obtain a commutative diagram 
	\begin{align}\label{diagram_induction}
		\xymatrix{T^*(G\times X)&G\times T^*X\ar@{_{(}->}[l]_{\rho_{pr}}\ar@{->>}[r]^{\varpi_{pr}}\ar@{..>}[rd]^{q}&T^*X\\
			T^*(G/H\times X)\times_{G/H\times X}(G\times X)\ar@{_{(}->}[u]^{\rho_\pi}\ar@{->>}[r]^ - {\varpi_\pi}&T^*(G/H\times X)&G/H\times T^*X\ar@{_{(}->}[l]_{s}
		}
	\end{align}
	Here $q$ is the natural quotient map but the embedding $s$ does not simply come from the zero section. Instead, it is the composition map $\varpi_\pi\circ\rho_{\pi}^{-1}\circ \rho_{pr}\circ q^{-1}$, which is clearly well-defined.
	
	\begin{prop}\label{induction_thm}
		For any $\Mbb\in \HM^G(G/H\times X)$, $SS(\Mbb)\subset s(G/H\times T^*X)$ and the following diagram commutes, where $\wt{\gr}$ is taken with respect to Hodge filtration.
		\[\xymatrix{\HM^H(X)\ar[r]^ - {{\Ind}_H^G}_ - {\sim}\ar[d]^{\wt{\gr}}&\HM^G(G/H\times X)\ar[d]^{\wt{\gr}}\\	\Coh^H(T^*X)\ar[r]^ - {{\ind}_H^G}_ - {\sim}&\Coh^G(s(G/H\times T^*X))}\]
	\end{prop}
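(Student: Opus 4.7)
The plan is to exploit the defining isomorphism $\pi^\dagger \Mbb \cong pr^\dagger \Fcal$, where $\Fcal \in \HM^H(X)$ is the unique object corresponding to $\Mbb$ under the equivalence $\Ind_H^G$, together with two standard facts: both $\pi$ and $pr$ are smooth surjections (with $\pi$ an $H$-torsor and $pr$ the trivial $G$-bundle), and for any smooth morphism $f$ between smooth varieties one has $SS(f^\dagger \Ncal) = \rho_f(\varpi_f^{-1}(SS(\Ncal)))$ as well as a canonical filtered smooth pullback in Saito's sense that commutes with $\wt{\gr}^F$ (up to a shift in the filtration by the relative dimension).

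For the singular support inclusion, first observe that $SS(pr^\dagger \Fcal) = \rho_{pr}(G \times SS(\Fcal))$ sits in $T^*(G \times X)$ as the zero section along $G$ times $SS(\Fcal)$ along $X$, since $pr$ is the trivial projection. Transporting this through the isomorphism $\pi^\dagger \Mbb \cong pr^\dagger \Fcal$ and using the surjectivity of $\varpi_\pi$ (a consequence of $\pi$ being smooth surjective), one obtains $SS(\Mbb) = \varpi_\pi(\rho_\pi^{-1}(\rho_{pr}(G \times SS(\Fcal))))$. Unwinding the definition $s = \varpi_\pi \circ \rho_\pi^{-1} \circ \rho_{pr} \circ q^{-1}$, this equals $s(G/H \times SS(\Fcal)) \subset s(G/H \times T^*X)$, which is the desired inclusion.

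For the commutativity of the associated-graded diagram, I would apply $\wt{\gr}^F$ to the filtered isomorphism $\pi^\dagger \Mbb \cong pr^\dagger \Fcal$. Under the Lagrangian correspondences in (\ref{diagram_induction}), Saito's compatibility produces an isomorphism of $H$-equivariant coherent sheaves on $G \times T^*X$ between the pullbacks $q^* s^* \wt{\gr}^F \Mbb$ and $\varpi_{pr}^* \wt{\gr}^F \Fcal$. Since the coherent induction $\ind_H^G$ is characterized by precisely this descent datum along the $H$-torsor $q: G \times T^*X \to G/H \times T^*X$, faithfully flat descent then yields $\wt{\gr}^F \Mbb \cong \ind_H^G \wt{\gr}^F \Fcal$ on $s(G/H \times T^*X)$, as required.

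The hard part, I expect, will be verifying in Saito's framework that $\Ind_H^G$ applied to a Hodge module yields an object whose defining $\Dcal$-module isomorphism $\pi^\dagger \Mbb \cong pr^\dagger \Fcal$ promotes to an isomorphism of filtered $\Dcal$-modules, with the correct Tate twist accounting for the discrepancy of $\dim H$ between the relative dimensions of $pr$ and $\pi$. This compatibility is inherent in the theory of equivariant Hodge modules (see \cite[Chapter 5]{achar}) but requires careful bookkeeping of shifts. Once secured, both parts of the proposition reduce to the formal descent argument sketched above.
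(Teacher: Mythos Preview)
Your proposal is correct and follows essentially the same route as the paper: both arguments apply the smooth-pullback compatibility of the Hodge filtration to the defining isomorphism $\pi^\dagger \Mbb \cong pr^\dagger \Fcal$, read off the singular support inclusion from the Lagrangian correspondence diagram, and then descend along the $H$-torsor $q$ to identify $\wt{\gr}^F \Mbb$ with $\ind_H^G \wt{\gr}^F \Fcal$. The paper invokes \cite[Theorem 4.7]{kashiwara2003} for the key filtered compatibility (your ``Saito's compatibility''), which cleanly handles the shift bookkeeping you flag as the hard part.
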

	\begin{proof}
		That the functor ${\Ind}_H^G$ is an isomorphism in the level of Hodge modules can be found in \cite[Theorem 6.2]{achar}. 
		It remains to show the essential image of $\HM^G(G/H\times \Gfrak)$ under $\wt{\gr}$ is as desired and the diagram is commutative.
		
		Suppose $\Mbb={\Ind}_H^G \Lbb$ for $\Lbb\in \HM^H(\Gfrak)$. By definition, $\pi^\dagger \Mbb=pr^\dagger \Lbb$ as $G$-equivariant $\Dcal_{G\times X}$-modules. Taking associated graded of both sides and  using \cite[Theorem 4.7]{kashiwara2003}, we obtain an isomorphism in $\Coh^G(T^*(G\times X))$ \begin{align}
			\label{pullback}
			(\rho_\pi)_*\varpi_\pi^* {\wt{\gr}}^\bullet \Mbb\cong(\rho_{pr})_*\varpi_{pr}^*\wt{\gr}^{\bullet}\Lbb.
		\end{align}
		%	where the shift of grading by $b=\dim(G/H)$ comes from the identity 
		%	\[\dim (G\times T^*X)-\dim T^*X=\dim (G\times T^*X)-\dim(G/H\times T^*X)+b.\]
		In particular, we see $SS(\Mbb)\subset \varpi_\pi\rho_{\pi}^{-1}(G\times T^*X)\subset s(G/H\times T^*X)$.
		
		Given the diagram (\ref{diagram_induction}), the identity (\ref{pullback}) implies $\varpi_{pr}^*\wt{\gr}^{\bullet}\Lbb=q^*\wt{\gr}^\bullet \Mbb$. Therefore  ${\ind}_B^G\wt{\gr}^{\bullet}\Lbb=\wt{\gr}^{\bullet}\Mbb$ and  the diagram in the statement commutes.
	\end{proof}

	We will also need the following result on direct images of Hodge modules. Let $p: X\to Y$ be a projective morphism between two smooth varieties. For the functor $p_\dagger:  \HM(X)\to \Drm^b\HM(Y)$, see \cite[23]{schnell}. Let $\wt{\Omega_{X/Y}}$ denote the pullback of the relative canonical bundle $\Omega_{X/Y}$ to $X\times_Y T^*Y$. 
	\begin{thm}(\cite[2.3.2]{laumon}, \cite[28]{schnell})\label{laumon_thm}The diagram below commutes 
		\[\xymatrix{\HM^G(X)\ar[d]^{\wt{\gr}}\ar[rrrr]^ - {p_\dagger}&&&&\Drm^b\HM^G(Y)\ar[d]^{\wt{\gr}}\\	
			\Coh^G(T^*X)\ar[rrrr]^ - {R(\varpi_p)_*(\wt{\Omega_{X/Y}}\otimes^L L\rho_p^*(-))}&&&&\Drm^b\Coh^G(T^*Y)
		}\]
		where $\wt{\gr}$ is taken with respect to the Hodge filtration.
	\end{thm}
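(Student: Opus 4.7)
My plan is to follow Laumon's classical approach of reducing to two fundamental cases via factorization of $p$, and then invoke Saito's strictness theorem to obtain commutativity in $\Drm^b$ on the nose rather than merely up to a spectral sequence. Since $p$ is projective, it factors as a closed embedding $i: X \hookrightarrow Z$ followed by a smooth projective morphism $q: Z \to Y$. Both $p_\dagger$ and the candidate Fourier--Mukai type functor $R(\varpi_p)_*(\wt{\Omega_{X/Y}}\otimes^L L\rho_p^*(-))$ are compatible with composition (the relative canonical bundle is multiplicative under composition, and the Lagrangian correspondences compose well), so it suffices to establish the diagram separately for $i$ and for $q$.

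For the closed embedding $i$, I would invoke Kashiwara's equivalence, which identifies $i_\dagger$ with induction along the conormal bundle. A direct calculation in coordinates transverse to $X$ identifies $\wt{\gr}^F\circ i_\dagger$ with $R(\varpi_i)_*(\wt{\Omega_{X/Z}}\otimes^L L\rho_i^*\wt{\gr}^F(-))$, where $\wt{\Omega_{X/Z}}$ arises from the determinant of the conormal bundle.

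For the smooth projection $q$, the pushforward is computed by the relative de Rham complex $Rq_*(\Omega^\bullet_{Z/Y}\otimes_{\Ocal_Z}(-)[\dim_Y Z])$ equipped with its naive Hodge filtration (shifted appropriately). Taking associated graded term by term and using that $\Omega^\bullet_{Z/Y}$ provides a Koszul-type resolution along the fibers, I would identify the graded complex with the target functor applied to $\wt{\gr}^F(-)$. At this level of generality, however, one only obtains a spectral sequence comparing $\wt{\gr}\circ q_\dagger$ with $R(\varpi_q)_*(\wt{\Omega_{Z/Y}}\otimes^L L\rho_q^*\wt{\gr}^F(-))$.

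The main obstacle, and the essential place where the Hodge-module hypothesis enters, is the degeneration of this spectral sequence. This is precisely the content of Saito's strictness theorem for projective direct images: for a polarizable Hodge module, the filtered complex $p_\dagger(\Mbb,F)$ is strict with respect to the Hodge filtration, which forces the spectral sequence to collapse at $E_1$ and upgrades the abstract comparison to an honest isomorphism in $\Drm^b\Coh^G(T^*Y)$. The $G$-equivariance is automatic throughout, since all constructions (factorization, Kashiwara induction, relative de Rham, and the Lagrangian correspondence) are manifestly compatible with the $G$-action.
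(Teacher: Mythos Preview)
The paper does not give its own proof of this theorem; it is stated purely as a citation to Laumon \cite[2.3.2]{laumon} and Schnell \cite[28]{schnell}, and the text immediately moves on to the next subsection. Your sketch is a faithful outline of the argument in those references: Laumon's factorization into a closed embedding and a smooth projection, the explicit identification in each case, and then Saito's strictness theorem to collapse the spectral sequence. So your proposal is correct and is essentially the proof the paper is pointing to, rather than an alternative route.
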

	\subsection{Back to the cuspidal setting}
	We apply the two results in the last section to $\Nbb_c$ and $\ol{\Nbb}_c$. 
	
	Since every regular nilpotent $x$ is contained in a unique Borel subalgebra, there are embeddings of $\Nscr_r$ into $\Bcal\times \lieg$ and of $U$ into $\Bcal\times \Gfrak$:
	\[\xymatrix{
		\Nscr_r\ar@{^{(}->}[r]\ar@{_{(}->}[rd]&\lieg&&U\ar@{^{(}->}[r]\ar@{_{(}->}[rd]_{\wt{i}}&\Gfrak\\
		&\Bcal\times \lieg\ar[u]_p &&&\Bcal\times\Gfrak\ar[u]}
	\]	
	Define the Springer cuspidal character $\Dcal_\lieg$-module $\Mbb_c$ to be the minimal extension of $\mathcal{F}_c$ to $\Bcal\times \lieg$ and the Springer cuspidal mirabolic $\Dcal_\Gfrak$-module $\ol{\Mbb}_c$ to be the minimal extension of $\mathcal{E}_c$ to $\Bcal\times\Gfrak$. The local system $\Fcal_c$ is also clean with respect to the inclusion $\wt{i}$, i.e., $\Mbb_c=\wt{i}_\dagger \Fcal$.
	Therefore by functoriality $p_\dagger \Mbb_{c}=\Nbb_c$. Similar to Lemma \ref{lemma_mirabolic_cuspidal}, we have that $\ol{\Mbb}_c=\Mbb_c\boxtimes \Ocal_V$.
	
	%Then $\Lbb$ is a strongly $B$-equivariant $\Dcal_\Gfrak$-module but only $B$-weakly equivariant.
	\begin{lem}\label{M}
		\begin{enumerate}[(1)]
			\item  The $G$-equivariant $\Dcal$-module underlying
			${\Ind}_{B}^{G}(\Lbb,F^\Lbb%, (i_\lien\circ i_0)_{\dagger}(\Lbb_0)_\Q
			)$ is $\Mbb_{c}$.
			\item The ${G}$-equivariant $\Dcal$-module underlying ${\Ind}_{B}^{G}(\ol{\Lbb},F^{\ol{\Lbb}}%,\ol{i}_{!*}(\ol{\Lbb}_0)_\Q
			)$ is $\ol{\Mbb}_c$.
		\end{enumerate}
	\end{lem}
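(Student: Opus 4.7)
The plan is to reduce both statements to an identification of local systems on the regular nilpotent locus, exploiting that $\Ind_B^G$ is an exact equivalence of categories. The key geometric input is that every regular nilpotent element lies in a unique Borel subalgebra, so $\mathrm{Stab}_G(x) \subset B$ for $x \in \lien_r$; consequently the action map induces an isomorphism $G \times^B \lien_r \xrightarrow{\sim} \Nscr_r$ compatible with the embedding $\Nscr_r \hookrightarrow \Bcal \times \lieg$, $x \mapsto (B_x, x)$.

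For part (1), first view $\Lbb$ as a $B$-equivariant $\Dcal_\lieg$-module via closed pushforward along $\lien \hookrightarrow \lieg$, and apply $\Ind_B^G$ with $X = \lieg$ in the notation of Section \ref{ind}. The result is set-theoretically supported on $G \times^B \lien \subset G/B \times \lieg = \Bcal \times \lieg$. On the open stratum $G \times^B \lien_r \cong \Nscr_r$, I claim $\Ind_B^G(\Lbb_0) \cong \Fcal_c$: by construction $\Lbb|_{\lien_r} = \Lbb_0 = \Fcal_c|_{\lien_r}$, and since $\Fcal_c$ is already $G$-equivariant on $\Nscr_r$, the identity $\pi^\dagger \Fcal_c \cong pr^\dagger \Lbb_0$ shows that $\Fcal_c$ satisfies the defining property of $\Ind_B^G(\Lbb_0)$. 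Since $\Ind_B^G$ is exact (as part of an equivalence), it commutes with the $!*$-extension functor; hence $\Lbb = (i_0)_{!*}\Lbb_0$, extended via $\lien \hookrightarrow \lieg$, induces to the minimal extension $j_{!*}\Fcal_c$ from $\Nscr_r$ to $\Bcal \times \lieg$, which is precisely $\Mbb_c$ by definition.

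Part (2) follows formally from part (1). The same argument as in the proof of Lemma \ref{lemma_mirabolic_cuspidal} — the function $s$ has degree $m$ along $V$, so $\Ecal_c$ has trivial monodromy in the $V$-direction — yields the factorizations $\ol{\Lbb} \cong \Lbb \boxtimes \Ocal_V$ and $\ol{\Mbb}_c \cong \Mbb_c \boxtimes \Ocal_V$. One then checks that $\Ind_B^G$ commutes with external product against the trivial $\Dcal_V$-module by unwinding $\pi^\dagger \mathcal{E} \cong pr^\dagger \Fcal$ with $X = \Gfrak$: both pullbacks of $\Ocal_V$ under the $V$-components of $\pi$ and $pr$ are the trivial sheaf, so $\Ind_B^G(\Lbb) \boxtimes \Ocal_V$ satisfies the universal property defining $\Ind_B^G(\Lbb \boxtimes \Ocal_V)$.

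The main obstacle is justifying rigorously that $\Ind_B^G$ commutes with $!*$-extension across the singular boundary of $\Nscr_r$ inside $\Bcal \times \lieg$. This reduces to exactness of the equivalence $\Ind_B^G$, but requires careful bookkeeping of the smooth-pullback degree shift by $\dim G/B = \binom{n}{2}$ inherent in the definitions of $\pi^\dagger$ and $pr^\dagger$, and a verification that the $B$-equivariant structure transported from $\lien_r$ matches the restriction of the canonical $G$-equivariant structure on $\Fcal_c$.
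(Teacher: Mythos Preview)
Your argument is correct and follows essentially the same route as the paper: identify $G\times^B\lien_r\cong\Nscr_r$, check $\Ind_B^G\Lbb_0\cong\Fcal_c$ on this open stratum, then propagate to the closure. The paper packages the propagation step slightly differently: rather than invoking exactness of $\Ind_B^G$ to commute with $!*$-extension, it writes down the cartesian square
\[
\xymatrix{
\lien_r & G\times\lien_r \ar[l]_{p_0}\ar[r]^{\pi_0} & \Nscr_r \ar[d]^i \\
\lieg & G\times\lieg \ar[l]_p \ar[r]^\pi \ar[u] & \Bcal\times\lieg
}
\]
and uses smooth base change for $\dagger$-pushforward to get $\pi^\dagger i_\dagger\Fcal_c = p^\dagger(i_{0,r})_\dagger\Lbb_0$, which is exactly the defining relation $\pi^\dagger\Mbb_c = p^\dagger\Lbb$ (here cleanness of $\Fcal_c$ is used to identify $i_\dagger\Fcal_c$ with $\Mbb_c$, and $\Lbb$ is by definition the $\dagger$-extension). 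Your abstract formulation via exactness is equivalent; the paper's base-change version has the minor advantage that it works directly with $\dagger$-pushforward and only invokes cleanness at the last step, avoiding the bookkeeping with $!*$ that you flag as the ``main obstacle''. For part (2) both you and the paper reduce to (1) via the factorization $\ol\Lbb\cong\Lbb\boxtimes\Ocal_V$.
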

	\begin{proof}
		We only show (1) and the same argument applies to (2) since $\ol{\Lbb}\cong\Lbb\boxtimes \Ocal_V$. Consider the following cartesian diagram
		\[\xymatrix{
			\lien_r\ar[d]^{i_{0,r}}&G\times \lien_r\ar[l]_{p_0}\ar[r]^{\pi_0}\ar[d]^{i_r}&G\times^{B} \lien_r\ar@{=}[r]&\Nscr_r\ar[d]^i\\
			\lieg&G\times\lieg\ar[l]_p\ar[rr]^\pi&&\Bcal\times \lieg
		}\]
		
		Since
		$\mathcal{E}_{c}= \Ind_{B}^{G} \Lbb_0$, using base change twice, we have
		\[\pi^\dagger i_\dagger (\mathcal{E}_{c} )=(i_r)_\dagger \pi_0^\dagger(\mathcal{E}_{c} )=(i_r)_\dagger p_0^\dagger (\Fcal_c )=p^\dagger(i_{0,r})_\dagger (\Fcal_c )\]
		which proves the lemma.
	\end{proof}
	We let $\Yfrak=G\times^{B} \Yfrak_0$ with an embedding $i_\Yfrak: \Yfrak\to T^*(\Bcal\times\lieg)$ defined by $\ol{(g,x,y)}\mapsto (\ol{g}, [x,y], g\cdot x, g\cdot y)$. Let $\Lcal_\lambda$ be the $\ol{G}$-equivariant line bundle on $\Bcal$ associated to the weight $\lambda$.
	Write $\wt{\Lcal}_\lambda:=(\pi_{T^*(\Bcal\times\lieg)\to \Bcal})^* \Lcal_\lambda$. Endow $(i_\Yfrak)_*\Ocal_\Yfrak$, resp.  $(i_\Yfrak)_*\Ocal_\Yfrak\boxtimes \Ocal_V$, with the trivial $G$, resp. $\ol{G}$-equivariant structure. As in Remark \ref{remark_gln}, we make $\wt{\gr}^\Hrm\ol{\Mbb}_c$ a $\ol{G}$-equivariant sheaf by precomposing the original $\ol{\lieg}$-action with $\tau_c$ so that it is integrable.
	\begin{coro}\label{lemma_grM}
		As $\ol{G}$-equivariant $\Ocal$-modules, 
		\begin{itemize} \item $\wt{\gr}^{\Hrm}{\Mbb}_c\cong \wt{\Lcal}_{\mu_{\lceil c\rceil}+\alpha-m\gamma_n}\otimes (i_\Yfrak)_*\Ocal_\Yfrak$.
			\item $\wt{\gr}^{\Hrm}\ol{\Mbb}_c\cong \big(\wt{\Lcal}_{\mu_{\lceil c\rceil}+\alpha}\otimes (i_\Yfrak)_*\Ocal_\Yfrak\big)\boxtimes  (i_V)_*\Ocal_{V}$.
		\end{itemize}
	\end{coro}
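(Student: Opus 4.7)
The plan is to deduce this from the explicit $\ol{B}$-equivariant formulas in Lemmas \ref{lemma_L} and \ref{lemma_olL} by transporting them through the induction functor. The key input is that by Lemma \ref{M}, $\Mbb_c = \Ind_B^G\Lbb$ and $\ol{\Mbb}_c = \Ind_B^G\ol{\Lbb}$ as $G$-equivariant $\Dcal$-modules, and by uniqueness of the Hodge module structure on a minimal extension (coupled with cleanness of $\Fcal_c$), these identifications upgrade to isomorphisms of equivariant Hodge modules.

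Granting this, I would apply Proposition \ref{induction_thm} with $H=B$, $G=G$, and $X=\lieg$ (resp.\ $X=\Gfrak$) to obtain
\begin{align*}
\wt{\gr}^\Hrm \Mbb_c \;\cong\; \ind_B^G\bigl(\wt{\gr}^\Hrm \Lbb\bigr), \qquad \wt{\gr}^\Hrm \ol{\Mbb}_c \;\cong\; \ind_B^G\bigl(\wt{\gr}^\Hrm \ol{\Lbb}\bigr)
\end{align*}
as $\ol{G}$-equivariant coherent sheaves, with singular support contained in $s(G/B\times T^*\lieg)$ (resp.\ $s(G/B\times T^*\Gfrak)$). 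Substituting the formulas from Lemmas \ref{lemma_L} and \ref{lemma_olL}, the problem reduces to computing $\ind_B^G$ of $(i_{\Yfrak_0})_*\Ocal_{\Yfrak_0}$ twisted by the appropriate $\ol{B}$-character.

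The character part is straightforward: by definition, $\ind_B^G \C_\lambda = \Lcal_\lambda$ (pulled back to $T^*(\Bcal\times\lieg)$ becomes $\wt{\Lcal}_\lambda$), so the twisting factors $\C_{\mu_{\lceil c\rceil}+\alpha-m\gamma_n}$ and $\C_{\mu_{\lceil c\rceil}+\alpha}$ produce $\wt{\Lcal}_{\mu_{\lceil c\rceil}+\alpha-m\gamma_n}$ and $\wt{\Lcal}_{\mu_{\lceil c\rceil}+\alpha}$, respectively. For the geometric factor, I would unwind the embedding $s$ of diagram (\ref{diagram_induction}): since $\Yfrak_0 \subset T^*\lieg$ is $B$-stable, $\ind_B^G((i_{\Yfrak_0})_*\Ocal_{\Yfrak_0})$ is by construction supported on the image of $G\times^B\Yfrak_0$ under $s$, and its structure sheaf is $(i_\Yfrak)_*\Ocal_\Yfrak$ with $\Yfrak = G\times^B \Yfrak_0$ mapped into $T^*(\Bcal\times\lieg)$ by $\ol{(g,x,y)} \mapsto (\ol{g},[x,y],g\cdot x,g\cdot y)$. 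For the mirabolic version, the factor $(i_V)_*\Ocal_V$ passes through the induction untouched because by Lemma \ref{lemma_mirabolic_cuspidal} we have $\ol{\Lbb}\cong \Lbb\boxtimes\Ocal_V$ and the $V$-coordinates are $\ol{B}$-equivariant in a way compatible with the $\ol{G}$-action on $V$, giving the second bullet immediately.

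I expect the main obstacle to be the careful unwinding of the embedding $s$ in (\ref{diagram_induction}), together with the bookkeeping of equivariant structures noted in Remark \ref{remark_gln}: the formulas in Lemmas \ref{lemma_L} and \ref{lemma_olL} use the $\ol{B}$-action obtained by precomposing the original $\ol{\lieb}$-action with $\tau_c$, and one must check that this precomposition is compatible with the integration to a $\ol{G}$-action on the induced sheaf. Once the bundle $\Yfrak = G\times^B\Yfrak_0$ is identified with its image in $T^*(\Bcal\times\lieg)$ via $i_\Yfrak$, and the character is correctly transported to $\wt{\Lcal}_\lambda$, the two bullets follow.
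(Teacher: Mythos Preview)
Your proposal is correct and follows essentially the same approach as the paper's proof: apply Proposition \ref{induction_thm} together with Lemma \ref{M} to obtain $\wt{\gr}^{\Hrm}\Mbb_c\cong\ind_B^G\wt{\gr}^\Hrm\Lbb$ and $\wt{\gr}^{\Hrm}\ol{\Mbb}_c\cong\ind_B^G\wt{\gr}^\Hrm\ol{\Lbb}$, then substitute Lemmas \ref{lemma_L} and \ref{lemma_olL} and use $\ind_B^G\C_\lambda=\Lcal_\lambda$. Your additional remarks about unwinding the embedding $s$ and tracking the $\tau_c$-shifted equivariant structure are correct elaborations that the paper leaves implicit.
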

	\begin{proof}
		By  Proposition \ref{induction_thm} and Lemma \ref{M}, we have $\wt{\gr}^{\Hrm}{\Mbb}_c\cong \ind_B^G \wt{\gr}^{\Lbb}\Lbb $ and $\wt{\gr}^{\Hrm}{\ol{\Mbb}}_c\cong \ind_B^G \wt{\gr}^{\ol{\Lbb}}\ol{\Lbb} $. It remains to use Lemma \ref{lemma_L}, Lemma \ref{lemma_olL} and the fact that $\ind_B^G\C_\lambda=\Lcal_\lambda$.
	\end{proof}

	\section{The cuspidal dg module and bigraded characters}
	\subsection{The cuspidal dg module}\label{subsection_cuspidal}
	Recall the coordinates from (\ref{coordinates_x}) and (\ref{coordinates_y}). When $x\in\lien$ and $y\in\Xfrak$, we have $[x,y]\in\lieb$. Moreover, the diagonals of $[x,y]$ equal \[x_1y_1,\ x_2y_2-x_1y_1,\dots,\ -x_{n-1}y_{n-1}.\] As a result,  $x_iy_i=0$, $1\leq i\leq n-1$ if and only if $[x,y]=0$ mod $\lien$. That is to say, the following diagram is Cartesian:
	\begin{align}\label{n,b}
		\xymatrix{
			\Yfrak\ar[r]^{q_\lien}\ar[d]&G\times^B \lien\ar[d]\\
			G\times^B(\lien\times\Xfrak)\ar[r]^{q_\lieb}&G\times^B \lieb
		}
	\end{align}
	with $q_\lieb: G\times^B (\lien\times\Xfrak)\to G\times^B\lieb$ given by $(g,x,y)\mapsto (g,[x,y])$ and $q_\lien$ is the restriction of $q_\lieb$ to $\Yfrak$.
	
	On $\Bcal$ we have the vector bundle $\ul{\lieb^*}$ (resp. $\ul{\lieb}$) whose total space equals $G\times^B\lieb^*$ (resp. $G\times^B\lieb$). Let $\pi_\lieb: G\times^B\lieb\to \Bcal$ be the projection and $\iota_\lieb: \Bcal\to G\times^B \lieb$ be the zero section.
	
	The Koszul complex $(\wedge^\bullet \pi_\lieb^* \ul{\lieb^*},\partial_\lieb)$, with differential given by contraction with the canonical section of $\pi_\lieb^*\ul{\lieb}$, is quasi-isomorphic to $(\iota_\lieb)_*\Ocal_\Bcal$.
	
	One can similarly define $\pi_\lien$, $\iota_\lien$, $\ul{\lien^*}$, etc., such that $(\wedge^\bullet \pi_\lien^* \ul{\lien^*},\partial_\lien)$ is quasi-isomorphic to $(\iota_\lien)_*\Ocal_\Bcal$. 
	
	We define a dg algebra by \[\Acal'':=((\wedge^\bullet (\pi_\lien\circ q_\lien)^* \ul{\lien^*},q_\lien^*\partial_\lien)).\]
\if force	For a fixed Borel subalgebra $\lieb$, %let $\Yfrak_{\lieb}$ be the fiber of the projection $\Yfrak\to \Bcal$ at $\lieb$. Let $\phi: \C[\lien]\to \C[\Yfrak_\lieb]$ denote the homomorphism induced by the commutator map $\Yfrak_\lieb\to\lien$. 
	the differential $q_\lien^*\partial_\lien$ at $(\lieb,x,y)$ is defined by 
	\begin{align*}
		\wedge^i {\lien^*}&\to 
		\wedge^{i-1}  {\lien^*}\\
		v_1\wedge \dots \wedge v_i&\mapsto \sum_{k=1}^i (v_k,[x,y])(-1)^k v_1\wedge \dots \wedge \hat{v}_k \wedge \dots \wedge v_i
	\end{align*}\fi
	By definition, the associated dg scheme $\textit{Spec}(\Acal'')$ makes the following diagram Cartesian. 
	\begin{align}
		\label{diagram_A''}
		\xymatrix{
			\textit{Spec}(\Acal'')\ar[r]\ar[d]&\Bcal\ar[d]^{\iota_\lien}\\
			\Yfrak\ar[r]^{q_\lien}&G\times^B \lien
		}
	\end{align}
 
	Given the Cartesian diagram (\ref{n,b}), we have that
	$(i_{\Yfrak\to G\times^B(\lien\times\Xfrak)})_*\Acal''
	$ is quasi-isomorphic to 
	\[\Acal:=((\wedge^\bullet q_\lieb^*\pi_\lieb^* \ul{\lieb^*},q_\lieb^*\partial_\lieb)).\]
	
	Because (\ref{diagram_A''}) and the diagram on the left of (\ref{diagrams_two}) of  are Cartesian, diagram on the right of  (\ref{diagrams_two}) is also Cartesian.
	\begin{align}
		\label{diagrams_two}
		\xymatrix{
			G\times^B T^*\lieg\ar[r]\ar[d]&\Bcal\ar[d]&\textit{Spec}(\Acal'')\ar[r]\ar[d]&\Bcal\times T^*\lieg\ar[d]\\
			T^*(\Bcal\times \lieg)\ar[r]&G\times^B \lien&\Yfrak\ar[r]& T^*(\Bcal\times \lieg)
		}
	\end{align}

	Diagrams (\ref{n,b}), (\ref{diagram_A''}) and (\ref{diagrams_two}) can be combined into Figure \ref{big_diagram}.
	\begin{figure}[ht]
		\centering
		\[\begin{tikzcd}
			&& Spec(\Acal'')&& \Bcal \\
			& \Yfrak && G\times^B \lien \\
			G\times^B(\lien\times\Xfrak) && G\times^B\lieb &&\Bcal\times T^*\lieg \\
			&&& T^*(\Bcal\times\lieg)
			\arrow[from=2-2, to=3-1]
			\arrow[from=2-4, to=3-3]
			\arrow["q_\lien", from=2-2, to=2-4]
			\arrow["q_\lieb", from=3-1, to=3-3]
			\arrow[from=1-3, to=2-2]
			\arrow["\iota_\lien", from=1-5, to=2-4]
			\arrow[from=1-3, to=1-5]
			\arrow[from=4-4, to=2-4]
			\arrow["\pi_{T^*\lieg}",from=3-5, to=1-5]
			\arrow[from=3-5, to=4-4,]
			\arrow[bend right, from=2-2, to=4-4, crossing over]
			\arrow[bend right, from=1-3, to=3-5, crossing over]
		\end{tikzcd}\]
		\caption{Cartesian diagrams}
		\label{big_diagram}
	\end{figure}
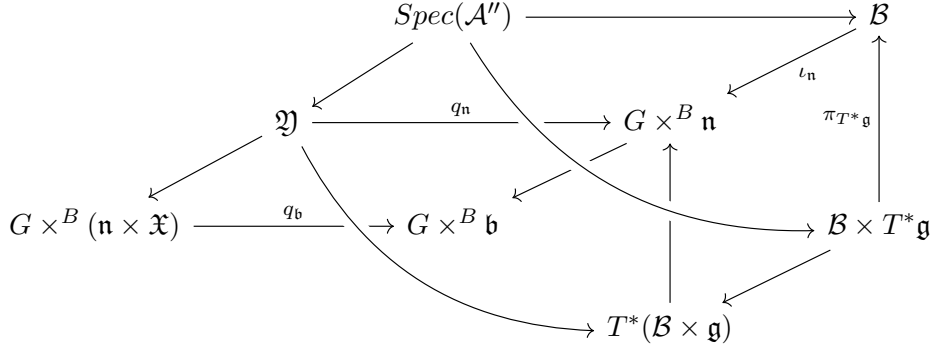
	
	Next, we consider the following maps
	\[\xymatrix{
		\Yfrak\ar[rr]^{i_\Yfrak}\ar[rrd]_{p_{\Yfrak}} &&T^*(\Bcal\times \lieg)\ar[d]^{p_{T^*(\Bcal\times \lieg)}} && \Bcal\times T^*\lieg\ar[ll]_{i_{\Bcal\times T^*\lieg}}\ar[lld]^{p_{\Bcal\times T^*\lieg}}\\
		&&T^*\lieg
	}\]
	where $i_\Yfrak$ is defined by $(g,x,y)\mapsto (g, [x,y], g\cdot x, g\cdot y)$.
	
	Let $\pi_{G\times^B(\lien\times \Xfrak)\to \Bcal}$ and $\pi_{\Yfrak\to \Bcal}$ be projections to $\Bcal$. We define dg modules \begin{align}
		\label{defn_A}
		\Acal_c:=\Acal\otimes^L (\pi_{G\times^B(\lien\times \Xfrak)\to \Bcal})^* \Lcal_{\mu_{\lceil c\rceil}}\end{align}
	and $\Acal''_c:=\Acal''\otimes^L (\pi_{\Yfrak\to \Bcal})^* \Lcal_{\mu_{\lceil c\rceil}}$. We call $\Acal_c$ the cuspidal dg module of slope $c$.
	
	We write $p: G\times^B(\lien\times\Xfrak)\to T^*\lieg: (g,x,y)\mapsto (g\cdot x, g\cdot y)$ with a restriction $p_{\Yfrak}: \Yfrak\to T^*\lieg$. Let $p_{T^*(\Bcal\times\lieg)}$ and $p_{\Bcal\times T^*\lieg}$ be projections to $T^*\lieg$ and $i_{\Bcal\times T^*\lieg}$ be the zero section. 
 
    We define a $\ol{G}$-equivariant structure on $\wt{\gr}^\Hrm\ol{\Nbb}_c$ by precomposing the original $\ol{\lieg}$-action with $\tau_c$ so that it is integrable.
	\begin{prop}\label{prop_grN_as_pushforward}There is a $\ol{G}$-equivariant isomorphism,
		\[\wt{\gr}^{\Hrm}\ol{\Nbb}_c=Rp_*\Acal_c\boxtimes (i_V)_*\Ocal_{V}.\]
	\end{prop}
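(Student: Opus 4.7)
The plan is to apply Laumon's theorem \ref{laumon_thm} to the Springer-type presentation $\Nbb_c = p_\dagger \Mbb_c$ and to translate the resulting complex into $Rp_*\Acal_c$ via the dg models of Figure \ref{big_diagram} and derived base change.

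By Lemma \ref{lemma_mirabolic_cuspidal}, $\ol{\Nbb}_c \cong \Nbb_c \boxtimes \Ocal_V$, and this external factorization is compatible with the Hodge module structure (Saito's minimal extension commutes with external products, consistent with a direct comparison of Lemmas \ref{lemma_L} and \ref{lemma_olL}). Since $\wt{\gr}^\Hrm \Ocal_V = (i_V)_*\Ocal_V$, the proposition reduces to the $\ol{G}$-equivariant identification
\[
\wt{\gr}^\Hrm \Nbb_c \cong Rp_*\Acal_c
\]
in $\Drm^b\Coh^{\ol{G}}(T^*\lieg)$.

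For the reduced statement, Lemma \ref{M} writes $\Nbb_c = p_\dagger \Mbb_c$ for the projection $p\colon \Bcal \times \lieg \to \lieg$. Since $\Mbb_c$ is supported on the Springer variety, $p$ is proper on the support, so Theorem \ref{laumon_thm} yields
\[
\wt{\gr}^\Hrm \Nbb_c \cong R(\varpi_p)_*\bigl(\wt{\Omega_{\Bcal\times\lieg/\lieg}} \otimes^L L\rho_p^*\wt{\gr}^\Hrm \Mbb_c\bigr).
\]
Substituting $\wt{\gr}^\Hrm \Mbb_c \cong \wt{\Lcal}_{\mu_{\lceil c\rceil}+\alpha - m\gamma_n}\otimes (i_\Yfrak)_*\Ocal_\Yfrak$ from Corollary \ref{lemma_grM}, and using the right-hand square of (\ref{diagrams_two})---whose derived enhancement realizes $\textit{Spec}(\Acal'')$ as $\Yfrak\times^L_{T^*(\Bcal\times\lieg)}(\Bcal\times T^*\lieg)$---derived base change gives $L\rho_p^*(i_\Yfrak)_*\Ocal_\Yfrak \cong \Acal''$ as a dg sheaf on $\Bcal\times T^*\lieg$.

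To match the resulting complex with $Rp_*\Acal_c$, I would use that $\Acal \simeq (i_{\Yfrak \to G\times^B(\lien\times\Xfrak)})_*\Acal''$, a consequence of the Cartesian square (\ref{n,b}); this collapses $Rp_*\Acal_c$ to $R(p_\Yfrak)_*(\Acal'' \otimes^L \Lcal_{\mu_{\lceil c\rceil}})$. The diagonal arrows in Figure \ref{big_diagram} then express $p_\Yfrak$ as the composition through $\Bcal \times T^*\lieg$ followed by $\varpi_p$, so the two expressions differ only by the weight identity $\wt{\Omega_{\Bcal\times\lieg/\lieg}} \otimes \Lcal_{\alpha - m\gamma_n} \cong \Ocal$, which records how $\alpha$ (the weight of $\Omega_{\lien/\lieg}$), the canonical of $\Bcal$, and the $-m\gamma_n$ twist from the absent $V$-direction combine. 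The main technical obstacle is justifying this derived base change rigorously: the classical pullback is non-transverse, so one must interpret the pullback as the dg scheme $\textit{Spec}(\Acal'')$ throughout. Figure \ref{big_diagram} is precisely the bookkeeping device identifying $\Acal''$ simultaneously as a Koszul model on $\Yfrak$ and as the derived pullback along $\rho_p$, which is what makes both sides of the desired identification literally the same complex.
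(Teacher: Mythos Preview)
Your proposal is correct and follows essentially the same route as the paper: apply Laumon's Theorem~\ref{laumon_thm} to $p_\dagger\Mbb_c$, plug in Corollary~\ref{lemma_grM}, and use the Cartesian diagrams of Figure~\ref{big_diagram} to identify the resulting complex with $Rp_*\Acal_c$. The one place where the paper is more explicit is the step you flag as the ``main technical obstacle'': rather than invoking derived base change abstractly, the paper quotes \cite[Lemma~4.4.1]{isospec}, namely $(i_Y)_*Li_Y^*(i_Z)_*\Ocal_Z \cong (i_Z)_*Li_Z^*(i_Y)_*\Ocal_Y$, applied with $Y=\Bcal\times T^*\lieg$ and $Z=\Yfrak$ inside $T^*(\Bcal\times\lieg)$; this is exactly the swap that produces $\Acal''$ on $\Yfrak$ and makes the identification with $Rp_*\Acal_c$ immediate.
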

	\begin{proof}
		By Theorem \ref{laumon_thm}, $G$-equivariantly we have
		\[\wt{\gr}^{\Hrm}{\Nbb}_c=R (p_{\Bcal\times T^* \lieg})_*Li_{\Bcal\times T^*\lieg}^*(\wt{\gr}^\Hrm\Mbb_c)\]
		
		By Corollary \ref{lemma_grM}, we have $\wt{\gr}^{\Hrm}\ol{\Mbb}_c=(\wt{\Lcal}_{\mu_{\lfloor c\rfloor}+\alpha}\otimes (i_\Yfrak)_*\Ocal_\Yfrak)\boxtimes (i_V)_*\Ocal_{V}$.
		Therefore  there is a $\ol{G}$-equivariant identification
		\begin{align}
			\label{YZ}
			\wt{\gr}^{\Hrm}\ol{\Nbb}_c=R (p_{\Bcal\times T^* \lieg})_*\bigg(Li_{\Bcal\times T^*\lieg}^*\big(\wt{\Lcal}_{\mu_{\lfloor c\rfloor}+\alpha}\otimes (i_\Yfrak)_*\Ocal_\Yfrak\big)\otimes \pi_{T^*\lieg}^*\Omega_{\Bcal}\bigg)\boxtimes (i_V)_*\Ocal_{V} \end{align}
		Let us cite the following result:
		\begin{lem}(\cite[Lemma 4.4.1]{isospec})
			Let $X$ be a smooth variety and $i_Y:Y\to X$, $i_Z:Z\to X$ be embeddings of closed subvarieties. Then 
			\[(i_Y)_*Li_Y^*(i_Z)_*\Ocal_Z=(i_Y)_*\Ocal_Y\otimes^L_{\Ocal_X}(i_Z)_*\Ocal_Z=(i_Z)_*Li_Z^*(i_Y)_*\Ocal_Y.\]
		\end{lem}
		Applying this lemma to (\ref{YZ}) in the setting of $Y=\Bcal\times T^*\lieg$ and $Z=\Yfrak$, we obtain
		\begin{align}
			&R (p_{\Bcal\times T^* \lieg})_*\bigg(Li_{\Bcal\times T^*\lieg}^*\big(\wt{\Lcal}_{\mu_{\lfloor c\rfloor}+\alpha}\otimes (i_\Yfrak)_*\Ocal_\Yfrak\big)\otimes \pi_{T^*\lieg}^*\Omega_{\Bcal}\bigg)\nonumber\\
			=&\Rrm (p_{T^*(\Bcal\times \lieg)})_*\bigg(\wt{\Lcal}_{\mu_{\lfloor c\rfloor}}\otimes (i_\Yfrak)_*\Ocal_{\Yfrak}\otimes^L (i_{\Bcal\times T^*\lieg})_*\Ocal_{\Bcal\times T^*\lieg}\bigg) \nonumber \\
			=&R (p_{T^*(\Bcal\times \lieg)})_*R(i_\Yfrak)_*L(i_\Yfrak)^*\bigg(\wt{\Lcal}_{\mu_{\lfloor c\rfloor}}\otimes(i_{\Bcal\times T^*\lieg})_*\Ocal_{\Bcal\times T^*\lieg}\bigg) \nonumber\\
			=&R(p_{\Bcal\times T^*\lieg})_* L(i_\Yfrak)^*\bigg(\wt{\Lcal}_{\mu_{\lfloor c\rfloor}}\otimes(i_{\Bcal\times T^*\lieg})_*\Ocal_{\Bcal\times  T^*\lieg}\bigg).\label{laumon_formula}
		\end{align}
		
		Since the diagram on the right of (\ref{diagrams_two}) is Cartesian, we have
		\[L(i_\Yfrak)_*\bigg(\wt{\Lcal}_{\mu_{\lfloor c\rfloor}}\otimes(i_{\Bcal\times T^*\lieg})_*\Ocal_{\Bcal\times T^*\lieg}\bigg) =R(i_\Yfrak)_*\Acal''_c. \]
		
		Therefore,  (\ref{laumon_formula}) equals 
		\[R(p_{\Bcal\times T^*\lieg})_*R(i_\Yfrak)_*\Acal_c'' =R(p_\Yfrak)_*\Acal''_c
		%=R(p_{G\times^B(\lien\times\Xfrak)})_*R (i_{\Yfrak\to G\times^B(\lien\times\Xfrak)})_*\Acal_c 
		=Rp_*\Acal_c. \] 
		Therefore, $\ol{G}$-equivariantly $\wt{\gr}^{\Hrm}\ol{\Nbb}_c=Rp_*\Acal_c\boxtimes (i_V)_*\Ocal_{V}$.
	\end{proof}
	
	\subsection{The equivariant $K$-theory of the Hilbert scheme}
	Write $A:=\C^*\times\C^*$. Let $K^A(\Hilb^n)$ denote the equivariant K-theory group, which is a module over $K^A(pt)=\C[q^\pm,t^\pm]$. 
	
	Define the isospectral Hilbert scheme $\IHilb^n$ to be the reduced fibered product of the following diagram
	\[\xymatrix{
		\IHilb^n\ar[r]^\beta \ar[d]^\alpha&\C^{2n}\ar[d]\\
		\Hilb^n\ar[r]&\C^{2n}/S_n
	}
	\]
	A deep result of Haiman states that $\Pcal:=\alpha_*\Ocal_{\IHilb^n}$ is a vector bundle of rank $n!$, which is known since as the Procesi bundle.
	
	Moreover, the generalized McKay correspondence of Bridgeland-King-Reid implies an isomorphism
	\begin{align}
		\label{BKR}
		\beta_*\alpha^*: K^A(\Hilb^n)\cong K^{S_n\times A}(\C^{2n}).\end{align}
	The Grothendieck group $K^{S_n\times A}(\C^{2n})$ is freely generated by $V_\lambda\otimes\C[\C^{2n}]$ over $\C[q^\pm,t^\pm]$ where $V_\lambda$ is the irreducible representation of $S_n$ associated to $\lambda \vdash n$.
	Let $s_\lambda$ be the Schur function associated to $\lambda$. Then the bigraded Frobenius character of $V_\lambda\otimes\C[\C^{2n}]$ is
	\[\ch_{S_n\times\C^*\times\C^*}(V_\lambda\otimes\C[\C^{2n}])=s_\lambda(\frac{z}{(1-q)(1-t)}).\]
	
	Therefore, composing (\ref{BKR}) with $\ch_{S_n\times\C^*\times\C^*}$ establishes an isomorphism  \begin{align}
		\label{K_symmetric}
		\kappa: K^A(\Hilb^n)\cong \{f\in \C(q,t)[z_1,\cdots,z_n]^{S_n}| \text{$f((1-q)(1-t)z)$ has coefficients in $\C[q^\pm,t^\pm]$}\}.
	\end{align}
	such that $\kappa(\Vcal_\lambda)=s_\lambda(\frac{z}{(1-q)(1-t)})$ where $\Vcal_\lambda=\Hom_{S_n}(V_\lambda,\Pcal)$.
	%	\subsubsection{Fixed point basis and Macdonald polynomials}
	%Define an inner product on $\C(q,t)\otimes \C[z_1,\cdots,z_n]^{S_n}=\kappa(K^A(\Hilb^n))\otimes_{\C[q^\pm,t^\pm}\C(q,t)$ by 	\[(s_\lambda,s_\mu)=\delta_{\lambda,\mu}.\]
	
	Let $\lambda^t$ denote the transpose of the partition $\lambda\vdash n$. The modified Macdonald polynomials $\wt{H}_\lambda(z;q,t)$ are the unique symmetric polynomials satisfying
	\begin{align*}
		\wt{H}_\lambda((1-q)z;q,t)&\in \C(q,t)\{s_\mu|\mu\geq \lambda\};\\
		\wt{H}_\lambda((1-t)z;q,t)&\in \C(q,t)\{s_\mu|\mu\geq \lambda^t\};\\
		(\wt{H}_\lambda(z;q,t),s_{(n)})&=1.
	\end{align*}
	
	The $A$-fixed points in $\Hilb^n$ are in bijection with partitions of $n$. For any $\lambda\vdash n$, let $I_\lambda$ be the associated fixed point and $[I_\lambda]$ be the K-theory class corresponding to the skyscraper sheaf supported on $I_\lambda$.
	
	\begin{prop}(\cite[Theorem 4.1.5 and Proposition 5.4.1]{haiman_survey})
		The image of $[I_\lambda]$ under (\ref{K_symmetric}) is $\wt{H}_\lambda$.
	\end{prop}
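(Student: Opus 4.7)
The plan is to identify $\kappa([I_\lambda])$ with $\wt{H}_\lambda$ by tracing the fixed-point class through the Bridgeland-King-Reid equivalence (\ref{BKR}) and invoking Haiman's geometric realization of the modified Macdonald polynomials.

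First, I would analyze $\beta_*\alpha^*[I_\lambda] \in K^{S_n\times A}(\C^{2n})$. Since $I_\lambda$ lies over the origin of $\C^{2n}/S_n$ and $\Pcal = \alpha_*\Ocal_{\IHilb^n}$ is flat over $\Hilb^n$ by Haiman's $n!$-theorem, the pullback $\alpha^*[I_\lambda]$ is the class of the scheme-theoretic fiber, which is set-theoretically supported at a single closed point. Its pushforward under $\beta$ is therefore a skyscraper at $0 \in \C^{2n}$ whose underlying $S_n\times A$-module is the Procesi fiber $\Pcal|_{I_\lambda}$, expressed in $K^{S_n\times A}(\C^{2n})$ as the product of $[\Pcal|_{I_\lambda}]$ with the Koszul class of the origin.

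Second, I would invoke Haiman's main theorem, which identifies the $S_n\times A$-character of $\Pcal|_{I_\lambda}$: namely, $\Pcal|_{I_\lambda} = \sum_\mu \wt{K}_{\mu\lambda}(q,t) V_\mu$, where the $\wt{K}_{\mu\lambda}(q,t)$ are the modified Kostka-Macdonald coefficients defined by $\wt{H}_\lambda = \sum_\mu \wt{K}_{\mu\lambda}(q,t) s_\mu$. Equivalently, $\Vcal_\mu|_{I_\lambda} = \wt{K}_{\mu\lambda}(q,t)$ in $K^A(\mathrm{pt})$. This is the geometric content of Macdonald positivity and is the key ingredient.

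Third, I would match the $S_n\times A$-character against the normalization $\kappa(\Vcal_\mu) = s_\mu[z/((1-q)(1-t))]$. Combining the previous two steps, the Koszul factor $[\Ocal_0]$ together with the plethystic denominator in $\kappa(\Vcal_\mu)$ cancel via the substitution $z \mapsto z(1-q)(1-t)$, yielding $\kappa([I_\lambda]) = \wt{H}_\lambda(z;q,t)$. The main obstacle lies precisely in verifying this plethystic bookkeeping and the compatibility with the integrality lattice in (\ref{K_symmetric}); this is handled by the defining triangularity of $\wt{H}_\lambda$ in the two plethystic Schur bases $\{s_\mu[z/(1-q)]\}$ and $\{s_\mu[z/(1-t)]\}$ together with the uniqueness afforded by those conditions, as worked out in \cite{haiman_survey}.
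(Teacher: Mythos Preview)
The paper does not supply its own proof of this proposition; it is stated with a bare citation to \cite[Theorem 4.1.5 and Proposition 5.4.1]{haiman_survey} and used as a black box. Your proposal correctly reconstructs the argument behind that citation: pull back the skyscraper through the BKR equivalence, identify the resulting $S_n\times A$-module with the Procesi fiber $\Pcal|_{I_\lambda}$, and invoke Haiman's theorem that its Frobenius character is $\wt H_\lambda$. This is exactly the content of the two results in \cite{haiman_survey} being cited, so your approach agrees with the paper's (implicit) one.

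One small point worth tightening in your writeup: the cancellation you describe in the third step is cleaner if phrased as follows. Since $\{\Vcal_\mu\}$ is a $\C[q^\pm,t^\pm]$-basis of $K^A(\Hilb^n)$ and $\kappa$ is $\C[q^\pm,t^\pm]$-linear with $\kappa(\Vcal_\mu)=s_\mu[z/((1-q)(1-t))]$, it suffices to expand $[I_\lambda]=\sum_\mu \wt K_{\mu\lambda}(q,t)\,[\Vcal_\mu]$ in this basis; but that identity is precisely the statement $\Vcal_\mu|_{I_\lambda}=\wt K_{\mu\lambda}(q,t)$, i.e.\ Haiman's theorem. Then $\kappa([I_\lambda])=\sum_\mu \wt K_{\mu\lambda}(q,t)\, s_\mu[z/((1-q)(1-t))]=\wt H_\lambda[z/((1-q)(1-t))]$, and the plethystic denominator is absorbed into the definition of the target in (\ref{K_symmetric}). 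This avoids having to unwind the Koszul factor explicitly.
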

	Throughout, we may not distinguish between partitions and Young diagrams. 
	
	For a box $x$ inside a Young diagram $\sigma$, let $a,\ell$, resp. $a',\ell'$, denote its arm and leg, resp. coarm and coleg (demonstrated in Figure (\ref{figure_arm})). For a Young tableau, define the weight of the box $x$ labeled $i$ by $\chi_i=q^{a'(x)}t^{l'(x)}$.
	
	Define \[g_\lambda=\prod_{x\in\sigma}(1-q^{a(x)+1}t^{-\ell(x)})(1-q^{-a(x)}t^{\ell(x)+1})\] which is the bigraded character of the cotangent space at $I_\lambda$ in $\Hilb^n(\C^2)$.
	\begin{figure}[ht]
		\centering
		\begin{tikzpicture}
			\draw (0,0)--(0,5)--(1,5)--(1,4)--(3,4)--(3,2)--(5,2)--(5,0)--(0,0);
			\draw (2,1)--(2,1.5)--(2.5,1.5)--(2.5,1)--(2,1);
			\draw (2.25,1.25) node {};
			\draw [<->,>=stealth] (2.5,1.25)--(5,1.25);
			\draw [<->,>=stealth] (0,1.25)--(2,1.25);
			\draw [<->,>=stealth] (2.25,1.5)--(2.25,4);
			\draw [<->,>=stealth] (2.25,0)--(2.25,1);
			\draw (4,1) node {$a(\square)$};
			\draw (1,1) node {$a'(\square)$};
			\draw (1.9,2.8) node {$l(\square)$};
			\draw (2.8,0.5) node {$l'(\square)$};
		\end{tikzpicture}
		\caption{(\cite[Fig.1]{gn})Arm, leg, co-arm and co-leg}
		\label{figure_arm}
	\end{figure}

	Let $\iota_\lambda: \{I_\lambda\}\hookrightarrow \Hilb^n$ denote the embedding. For any $[\Fcal]\in K^A(\Hilb^n)$, under the isomorphism (\ref{K_symmetric}) we have the localization formula  (\cite[Proposition 5.10.3]{chrissginzburg}):
	\begin{align}\label{formula_localization}
		\kappa([\Fcal])=\sum_{\lambda\vdash n}\frac{\wt{H}_\lambda}{g_\lambda}\ch_A(L\iota_\lambda^*\Fcal).
	\end{align}

	\subsection{Bigraded character of $\Lrm_c$}
	\subsubsection{Principal nilpotent pairs}\label{C*actions}
	In \cite{ginzburg_principal_nilpotent}, a pair of commuting elements $(x_1,x_2)$ in $\lieg\times\lieg$ is called a principal nilpotent pair if 
	\begin{itemize}
		\item $(x_1,x_2)$ is regular, i.e., the joint centralizer of $x_1$ and $x_2$ is of minimal dimension.
		\item For any $(t_1,t_2)\in \C^*\times\C^*$, there exists some $g\in G$ such that $(t_1x_1,t_2x_2)=(\Ad(g)x_1,\Ad(g) x_2)$.
	\end{itemize} 
	
	It is shown in \cite[Theorem 1.2]{ginzburg_principal_nilpotent} that for every principal nilpotent pair $(e_1,e_2)$, there exists an associated semisimple pair $\hbf=(h_1,h_2)$ such that $\hbf$ is regular and 
	$[h_i,e_j]=\delta_{ij}e_j$ for $i,j=1,2$. 
	
	The adjoint action of $(h_1,h_2)$ decomposes $\lieg$ into weight spaces $\lieg=\oplus_{a,b\in \Z^2}\lieg_{a,b}$ such that ad$(h_1)x=ax$ and ad$(h_2)x=bx$ for all $x\in \lieg_{a,b}$.

	For every fixed principal nilpotent pair $\ebf$ with associated semisimple pair $\hbf$, let
	$\rho: \C^*\times\C^*\to G$ be the 2-parameter subgroup with differential at the identity being $\C^2\to \lieg$: $(1,0)\mapsto h_1$, $(0,1)\mapsto h_2$. We define a $\C^*\times \C^*$ action on $\lieg$ by $\Ad(\rho)$ and a $\C^*\times \C^*$ action $\lieg\times\lieg$ by 
	\begin{align}
		\label{C*action_on_gtimesg}
		(t_1,t_2)(x,y)=(t_1^{-1}\Ad(\rho(t_1,t_2))x,t_2^{-1}\Ad(\rho(t_1,t_2))y)\end{align}
	such that $\ebf$ is a fixed point under this action. Note that under these actions, $(\lieg\oplus\lieg)_{a,b}=\lieg_{a+1,b}\oplus \lieg_{a,b+1}.$
	\subsubsection{Stalks of the cuspidal dg module}
	
	In the case of $\sln$, principal nilpotent pairs up to conjucation are in bijection with partitions of $n$. Indeed, for a partition $\lambda$, let $e_1$ be the associated Jordan normal form and $e_2$ be the Jordan normal form associated to the transpose $\lambda^t$. Then it is easy to check that $(e_1,e_2)$ defines a principal nilpotent pair and all principal nilpotent pairs can be constructed in this way. For $\lambda\vdash n$, let $\ebf_\lambda$ denote the corresponding principal nilpotent pair up to conjugacy.
	
	The $\C^*\times \C^*$-action defined in Section \ref{C*actions} induces a $\C^*\times \C^*$-action on $(Rp_*\Acal_c)|_{\ebf_\lambda}$. Consider this action versus the bigrading on $GS(\e\Lrm_c)|_{I_\lambda}$ induced by $F^\Hrm$ and the Euler field $\hrm_c\in \Hrm_c$. Because the $\Ad(\rho)$-action is lost when taking descent, one has that 
	\begin{lem}\label{lemma_bigradedisom}
		There is a bigraded isomorphism between vector spaces $(Rp_*\Acal_c)|_{\ebf_\lambda}\cong GS(\e \Lrm_c)|_{I_\lambda}$.
	\end{lem}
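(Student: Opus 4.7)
The plan is to combine Proposition \ref{prop_grN_as_pushforward}, Corollary \ref{coro_lift}, and Proposition \ref{diagram} into a global identification of $\Psi_c(\ol{\Nbb}_c)$ with $GS(\e\Lrm_c)$ as $\C^*\times\C^*$-equivariant coherent sheaves on $\Hilb^n$, and then to read off the fiber at the fixed point $I_\lambda$ on both sides.

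First I will verify the hypothesis of Proposition \ref{diagram} for $\Mbb=\ol{\Nbb}_c$, namely that $\phi^\ell_{\Gamma(\Gfrak,\ol{\Nbb}_c)}$ is a filtered isomorphism for $\ell\gg 0$. This is the essential analytic input: it amounts to the strictness of the Hodge filtration on $\ol{\Nbb}_c$ with respect to the tensor product filtration coming from the shifting bimodules, and should follow from the explicit Hodge module structure constructed in Section \ref{dmod}. Once this is established, Proposition \ref{diagram} gives $\Psi_c(\ol{\Nbb}_c)\cong GS\circ\Omega_{-c-1}\circ\Hbb_{-c}(\ol{\Nbb}_c)$; combining with Corollary \ref{coro_lift} and the relation $\Omega_{-c-1}\circ\Omega_c=\mathrm{id}$ yields the desired global identification $\Psi_c(\ol{\Nbb}_c)\cong GS(\e\Lrm_c)$.

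Next I localize both sides at the torus-fixed point $I_\lambda$. The preimage of $I_\lambda$ in $\wt{\Hilb^n}$ is the $G$-orbit of a triple $(X_\lambda,Y_\lambda,v_\lambda)$ in which $(X_\lambda,Y_\lambda)$ represents the principal nilpotent pair $\ebf_\lambda$: at the monomial ideal $I_\lambda$, the tautological pair acts on $\C[x,y]/I_\lambda$ by multiplication by $x$ and $y$, which is precisely the pair of commuting Jordan matrices indexed by $\lambda$ and $\lambda^t$. Using Proposition \ref{prop_grN_as_pushforward} together with the defining formula for $\Psi_c$, the stalk $\Psi_c(\ol{\Nbb}_c)|_{I_\lambda}$ is the direct sum over $\ell\geq 0$ of the $\tau_{-c-\ell}(\ol{\lieg})$-isotypic parts of $(Rp_*\Acal_c)|_{(X_\lambda,Y_\lambda)}\otimes ((i_V)_*\Ocal_V)|_{v_\lambda}$. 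The central character forces the $\ell$-th summand to pick out the degree $m+\ell n$ component of $\Sym V$, which at the cyclic vector $v_\lambda$ is one-dimensional; this exactly accounts for the $\Ocal_{\Hilb^n}(\ell)$-grading hidden in the Proj construction behind $GS(\e\Lrm_c)|_{I_\lambda}$, so after absorbing this factor the remaining contribution on the left is $(Rp_*\Acal_c)|_{\ebf_\lambda}$.

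It remains to match the bigradings. The $\C^*\times\C^*$-action on $\lieg\times\lieg$ from (\ref{C*action_on_gtimesg}) that makes $\ebf_\lambda$ fixed differs from the standard scaling of the cotangent fiber by the adjoint twist $\Ad(\rho)$; since $\Ad(\rho)$ factors through the diagonal $G$-action it is annihilated by the GIT quotient, so the induced torus action on $\Hilb^n$ agrees with the standard scaling of $\C^2$. This is precisely the action which, together with the filtration $F^\Hrm$ and the Euler field $\hrm_c$, defines the bigrading on $GS(\e\Lrm_c)|_{I_\lambda}$ as recalled at the end of Section \ref{descent}. The principal obstacle I expect is the strictness hypothesis of Proposition \ref{diagram}: it is what guarantees that the Hodge filtration interacts correctly with the shift bimodules and upgrades a bare isomorphism of sheaves to a bigraded one.
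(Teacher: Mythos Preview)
Your overall strategy is the right one and is exactly how the paper eventually establishes this identification (it is Theorem \ref{thm_Lc_Pmn}(ii)). However, there is a genuine gap at the point you yourself flag as ``the principal obstacle'': the strictness hypothesis of Proposition \ref{diagram}. You assert that it ``should follow from the explicit Hodge module structure constructed in Section \ref{dmod}'', but this is not how the paper proves it, and I do not see how to extract it directly from Saito's formalism. In the paper the strictness is Proposition \ref{hodge_compatible_shift}, and its proof runs through a \emph{dimension count}: one first computes the bigraded character of $(Rp_*\Acal_c)|_{\ebf_\lambda}$ explicitly (Proposition \ref{characterofA'}), relates it to the shuffle generator $P_{m,n}\cdot 1$ (Proposition \ref{prop_P_A}, Corollary \ref{coro_ch_A}), and then uses the resulting equality of total dimensions with the Catalan number $\tfrac{(m+n-1)!}{m!n!}$ to show $\Gamma(T^*\Gfrak,\wt{\gr}^{\Hrm}\ol{\Nbb}_c)^{\tau_{-c}(\ol{\lieg})}=\Gamma(\wt{\Hilb^n},\wt{\gr}^{\Hrm}\ol{\Nbb}_c)^{\tau_{-c}(\ol{\lieg})}$ (Lemma \ref{lemma_grN_Hilb_T*g}). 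Only then does Proposition \ref{diagram} apply.

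Note the logical ordering this forces: in the paper, Lemma \ref{lemma_bigradedisom} is placed \emph{before} Proposition \ref{characterofA'} and Proposition \ref{hodge_compatible_shift}. The one-sentence justification the paper gives (``Because the $\Ad(\rho)$-action is lost when taking descent'') addresses only your step 4, the matching of the two torus actions; the existence of the underlying isomorphism of sheaves is deferred to Theorem \ref{thm_Lc_Pmn}(ii). Your proposal therefore attempts to prove more at this stage than the paper does, and if you carry it out in the way you outline, the lemma ends up depending on the character computation that immediately follows it. That is fine as a reorganization of the argument, but be aware that the strictness is not a formal consequence of Hodge theory here---it is bought with the combinatorics of Section \ref{section_shuffle}.
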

	\begin{defn}
		We call a Young tableau an almost standard Young tableau (ASYT) if the labels increase rightwards on rows and upwards on columns, with the exception that the labels are allowed to decrease up to $1$ going up.
	\end{defn}
	For an example, see Appendix \ref{example_ASYT} for a full list of all ASYT of three boxes. Recall that if the labels increase rightwards on rows and upwards on columns, we obtain a standard Young tableau.
	Let $\ASYT_\lambda$, resp. $\SYT_\lambda$, be the set of almost standard Young tableaux, resp. standard Young tableaux, of shape $\lambda$. 
	\begin{rem}
		Almost standard Young tableaux appear in the discussion of the ``eccentric correspondence" in \cite[4.5]{negutthesis} and \cite[2.3]{gn_commuting_idempotent}, which is invented to study the shuffle generators defined by eq. (\ref{Pmn_cuspidal}). When specialized, the eccentric correspondence captures the geometry of the cuspidal dg algebra $\Acal$ at homological degree $0$. 
	\end{rem} 
	
	Since $\hbf$ is regular, the Borel subalgebras containing both $h_1$ and $h_2$ are in bijection with the Weyl group. We fix such a bijection $w\leftrightarrow \lieb_w$. Write $\Zfrak:=G\times^B(\lien\times\Xfrak)$ and $\lien_w=[\lieb_w,\lieb_w]$ for $w\in W$. We use the subscript $_r$ to indicate the regular locus, i.e., when the stablizer of the $G$-action is of the minimal possible dimension. Then $\Zfrak_r^A=\sqcup_{w\in W} \Zfrak_r^{w,A}$ with 
	\[\Zfrak_r^{w,A}:=\{\lieb_w\}\times (\lien_w\oplus \Xfrak_w)\cap (\lieg\oplus \lieg)_r^A.\]
	Similar to \cite[Lemma 4.4.1]{bgcombo}, we have that
	\begin{lem}
		For $\ebf=(e_1,e_2)$ with associated principal semisimple pair $(h_1,h_2)$, the set of Borel subalgebras that contains $e_1,e_2,h_1,h_2$ are in bijection with $\ASYT_\lambda$.
	\end{lem}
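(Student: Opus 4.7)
The plan is to parametrize Borels containing $(h_1,h_2)$ by fillings of $\lambda$ and then translate the containment conditions on $e_1,e_2$ into the combinatorial conditions defining $\ASYT_\lambda$.

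First, because $\hbf=(h_1,h_2)$ is regular semisimple, it lies in a unique Cartan subalgebra $\lieh_\ebf\subset\sln$, and the Borels containing $\lieh_\ebf$ are in bijection with $W=S_n$, consistent with the parametrization $w\leftrightarrow\lieb_w$ already used. The key representation-theoretic input I will invoke is the structure theorem for principal nilpotent pairs in type $A$ (Ginzburg): the joint weight decomposition of the defining representation $V=\C^n$ under $(h_1,h_2)$ is indexed by the boxes of $\lambda$, with $v_x$ of bidegree $(a'(x),\ell'(x))$, and since $e_1\in\lieg_{1,0}$, resp.\ $e_2\in\lieg_{0,1}$, act as principal nilpotents of Jordan type $\lambda$, resp.\ $\lambda^t$, the only nonzero matrix coefficients of $e_1$, resp.\ $e_2$, in the basis $\{v_x\}$ are those shifting one step along a row, resp.\ along a column, of $\lambda$.

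Each $\lieb_w$ containing $\lieh_\ebf$ is determined by an ordering of the eigenbasis $\{v_x\}_{x\in\lambda}$, equivalently by a filling $T:\lambda\to\{1,\dots,n\}$; denote $V^T_k=\mathrm{span}\{v_x : T(x)\le k\}$, so that $\lien_w$ consists of operators strictly lowering the flag $V^T_\bullet$, while the slice $\Xfrak_w$ consists of operators mapping $V^T_k$ into $V^T_{k+1}$ (i.e.\ the matrices with zeros strictly below the subdiagonal in the basis $v_{x_1},\dots,v_{x_n}$). Using the arrow description of $e_1,e_2$ from Step~1, the condition $e_1\in\lien_w$ translates into $T(R(x))\gtrless T(x)$ for every rightward neighbor, yielding (after fixing the appropriate orientation of the flag, as in \cite[Lemma~4.4.1]{bgcombo}) that labels are monotone along rows of $\lambda$. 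Similarly, $e_2\in\Xfrak_w$ translates into the condition $T(U(x))\ge T(x)-1$ for the upward neighbor $U(x)$, i.e.\ labels along a column are either monotone in the ``row direction'' or drop by exactly one going up.

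Combining both row and column conventions with the definition of $\ASYT_\lambda$, the set of fillings arising from Borels $\lieb_w$ that contain all four of $e_1,e_2,h_1,h_2$ is exactly $\ASYT_\lambda$, giving the required bijection. The main obstacle is Step~1: one must carefully verify the action of $e_1,e_2$ on the joint eigenbasis so that the ``one-step slack'' built into $\Xfrak$ (as opposed to $\lieb$) corresponds precisely to the exceptional drop allowed in an ASYT; once this identification is set, the rest of the proof is a direct combinatorial translation parallel to \cite[Lemma~4.4.1]{bgcombo}.
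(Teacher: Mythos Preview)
Your argument is correct and is exactly what the paper intends: it offers no proof beyond the pointer ``similar to \cite[Lemma 4.4.1]{bgcombo}'', and your translation---Borels through $(h_1,h_2)$ correspond to fillings of $\lambda$, the condition $e_1\in\lien_w$ forces row-monotonicity, and the condition $e_2\in\Xfrak_w$ (one extra subdiagonal allowed) forces the relaxed column condition---is precisely the adaptation of that lemma needed here.

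One point you should make explicit, though: the lemma as \emph{stated} asks for Borels \emph{containing} $e_1,e_2,h_1,h_2$, which taken literally means $e_1,e_2\in\lieb_w$, hence both in $\lien_w$; that is the adapted condition of \cite{bgcombo} and gives $\SYT_\lambda$, not $\ASYT_\lambda$. What you actually prove (and what Proposition~\ref{characterofA'} uses) is the condition indexing $A$-fixed points of $\Zfrak=G\times^B(\lien\times\Xfrak)$ lying over $\ebf_\lambda$, namely $e_1\in\lien_w$ and $e_2\in\Xfrak_w$. Your interpretation is the right one; just say so at the start so the reader sees you have corrected the statement rather than misread it.
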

	We will call such Borel subalgebras almost adapted.
	
	Though $[\lien_w,\Xfrak_w]=\lieb_w$, we will insist on writing $[\lien_w,\Xfrak_w]$ to emphasize the $A$-action on it is induced by composing (\ref{C*action_on_gtimesg}) with $[-,-]$. Note that $\lieg_{1,1}=[\lien_w,\Xfrak_w]^A$. We fix an $A$-stable subspace $\Rfrak_w\subset [\lien_w,\Xfrak_w]$ such that $[\lien_w,\Xfrak_w]=\Rfrak_w\oplus \lieg_{1,1}$.

	For any bigraded vector space $V$, denote  $\lambda(V)=\sum_{i=0}^{\dim(V)}(-1)^i \ch_A(\wedge^i V)$. Following \cite[4.3]{bgcombo}, we adopt the $\Omega$-notation by setting 
	\begin{align}
		\label{omega}
		\Omega(\sum_{i,j} a_{i,j}q^it^j)=\prod (1-q^it^j)^{a_{i,j}}
	\end{align}
	and $\Omega^0(F)=\Omega(F-a_{0,0})$. Also, we write
	\[\omega(x)=\frac{(1-x)(1-qtx)}{(1-qx)(1-tx)}
	\]
	Analogous to \cite[Theorem 4.5.1]{bgcombo}, we have
	\begin{prop}\label{characterofA'}The bigraded character of the stalk $(Rp_*\Acal_c)|_{\ebf_\lambda}$ is 
		\begin{align}
			\label{formula_chA}
			\ch_A((Rp_*\Acal_c)|_{\ebf_\lambda})=&g_\lambda\frac{(1-qt)^{n-1}}{(1-t)^{n-1}(-t)^{n-1}}\sum_{\sigma\in  \ASYT_\ebf}\frac{\Xi_\sigma\prod_{i=1}^n\chi_{n-i+1}^{\mu_{\lfloor c\rfloor }(i)}}{\hat{\prod}_{i=1}^{n-1}(1-\frac{\chi_{i}}{t\chi_{i+1}})}
			%=&g_{\sigma_\ebf}\frac{(1-qt)^{n-1}}{(1-t)^{n-1}}\sum_{\sigma\in  C\SYT_\ebf}\frac{\Xi_\sigma\prod_{i=1}^n\chi_i^{\mu_c(i)}}{\hat{\prod}_{i=1}^{n-1}(1-t\frac{\chi_{i+1}}{\chi_i})}
		\end{align}
		where \begin{align}
			\label{Xi}
			\Xi_\sigma:=\hat{\prod}_i\frac{1}{(1-\chi_i^{-1})}
			\hat{\prod}_{1\leq i<j\leq n}\omega(\frac{\chi_{i}}{\chi_j})\end{align}
		The ``restricted" product $\hat{\prod}$ means we ignore all the zero linear denominators.
	\end{prop}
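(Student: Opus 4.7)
The plan is to compute $\ch_A((Rp_*\Acal_c)|_{\ebf_\lambda})$ by $A$-equivariant localization on the fiber $p^{-1}(\ebf_\lambda)$, closely following the strategy of \cite[Theorem 4.5.1]{bgcombo} while tracking the additional twist by $\Lcal_{\mu_{\lceil c\rceil}}$. Since $p$ is proper after restriction to a suitable $A$-stable neighborhood of the regular locus, the Atiyah-Bott-type localization theorem in equivariant K-theory applies: the stalk character at $\ebf_\lambda$ is a sum of local contributions over the connected components of the $A$-fixed locus of the fiber, each divided by the $K$-theoretic Euler class of the normal bundle.

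First I would identify the $A$-fixed points in $p^{-1}(\ebf_\lambda) \cap \Zfrak_r$. By the lemma preceding the proposition, together with the decomposition $\Zfrak_r^A = \sqcup_{w\in W} \Zfrak_r^{w,A}$, these fixed points are indexed precisely by $\ASYT_\lambda$, since the almost adapted Borel subalgebras are those containing $e_1, e_2, h_1, h_2$. For each $\sigma \in \ASYT_\lambda$ with associated Borel $\lieb_{w_\sigma}$, the ASYT bijection assigns box weights $\chi_1,\dots,\chi_n$ encoding how $A$ acts on the ordered basis of $\lieh^*$ distinguished by $\lieb_{w_\sigma}$.

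Next I would assemble the local contribution at $\sigma$ from three sources. The Koszul resolution $(\wedge^\bullet q_\lieb^* \pi_\lieb^* \ul{\lieb^*}, q_\lieb^*\partial_\lieb)$ underlying $\Acal$ contributes the alternating sum of the exterior powers of $[\lien_{w_\sigma}, \Xfrak_{w_\sigma}]^*$; together with the pulled-back cotangent directions of $T^*\lieg$ along $p$, a careful bookkeeping of weights produces the factor $\Xi_\sigma$ built from $\omega(\chi_i/\chi_j)$ for $i<j$ (from the $\lieb$ root structure) and the factors $1/(1-\chi_i^{-1})$ (from the $\Xfrak_{w_\sigma}$ directions). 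The restricted product $\hat\prod$ records the removal of weight-zero terms, which reflects the splitting $[\lien_{w_\sigma},\Xfrak_{w_\sigma}] = \Rfrak_{w_\sigma}\oplus \lieg_{1,1}$ and the choice of complement $\Rfrak_{w_\sigma}$. The denominator $\hat\prod_{i=1}^{n-1}(1-\chi_i/(t\chi_{i+1}))$ comes from the $\lien_{w_\sigma}$ directions, where $t$ arises from the second $\C^*$ factor in $A$ acting on the $\lien$-component in $\lien\times \Xfrak$. The global prefactor $g_\lambda (1-qt)^{n-1}/((1-t)^{n-1}(-t)^{n-1})$ accumulates the Euler-class contributions of the flag-variety directions inside $\Zfrak$ (giving the $(1-qt)^{n-1}$ and $(1-t)^{n-1}$ factors after Weyl-averaging) and the cotangent weights of $\Hilb^n$ at $I_\lambda$ encoded in $g_\lambda$, matching the localization formula (\ref{formula_localization}).

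Finally, the line bundle twist contributes $\prod_i \chi_{n-i+1}^{\mu_{\lfloor c\rfloor}(i)}$: at $\lieb_{w_\sigma}$, the fiber of $\Lcal_{\mu_{\lceil c\rceil}}$ carries the $A$-weight obtained by applying $w_\sigma$ to the character $\mu_{\lceil c\rceil}$, and the identity $\mu_{\lceil c\rceil}(i) = \mu_{\lfloor c\rfloor}(n-i+1)$ converts this into the stated form in terms of the box weights. The main obstacle will be the combinatorial bookkeeping in the second step: precisely identifying which weight-zero contributions to drop under $\hat\prod$ (this depends sensitively on the choice $\Rfrak_w$ and the orientation of the ASYT), and matching the $A$-weights on each $\lieb_{w_\sigma}$ with the $\chi_i$'s attached to boxes. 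Once this dictionary is in place, summing the contributions over $\sigma \in \ASYT_\lambda$ yields (\ref{formula_chA}).
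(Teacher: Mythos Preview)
Your overall strategy is the same as the paper's: equivariant localization on the $A$-fixed locus of $\Zfrak_r$ lying over $\ebf_\lambda$, with fixed points indexed by $\ASYT_\lambda$, following \cite[Theorem 4.5.1]{bgcombo}. The paper invokes \cite[Proposition 3.8.6]{bgcombo} to write the stalk character as
\[
\lambda((T^*_{\Cfrak_r^A}\Cfrak_r)|_{\ebf_\lambda})\sum_{\lieb_w\text{ almost adapted}}\lambda((T^*_{\Zfrak_r^{w,A}}\Zfrak_r)|_{\ebf_\lambda})^{-1}\lambda(\Rfrak_w^*)\prod_{i=1}^n\chi_{n-i+1}^{\mu_{\lceil c\rceil}(i)},
\]
then computes each $\lambda$-factor explicitly via $\Omega$-notation and simplifies.

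Where your sketch goes astray is in the attribution of individual factors, and this matters because the bookkeeping is the entire proof. Specifically: the factor $\hat\prod_i 1/(1-\chi_i^{-1})$ does not come from $\Xfrak_{w_\sigma}$ but from $\Stab(\ebf)^*$ inside the $\Cfrak_r$-cotangent term; the denominator $\hat\prod_{i=1}^{n-1}(1-\chi_i/(t\chi_{i+1}))$ comes not from $\lien_w$ but from the \emph{subdiagonal} part of $\Xfrak_w^*$ (the simple-root piece of $\{0\}\oplus\Xfrak_w^*$); and $g_\lambda$ is the cotangent contribution of $\Cfrak_r$ at $\ebf_\lambda$, not a flag-variety Euler class. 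More importantly, the paper's simplification produces an extra factor $\chi_1/\chi_n$ (from rewriting $\prod(1-t\chi_{i+1}/\chi_i)$ as $(-t)^{n-1}(\chi_n/\chi_1)\prod(1-\chi_i/(t\chi_{i+1}))$), and it is this factor that is absorbed by the identity $\mu_{\lceil c\rceil}=\mu_{\lfloor c\rfloor}+(1,0,\dots,0,-1)$ to convert the twist $\prod\chi_{n-i+1}^{\mu_{\lceil c\rceil}(i)}$ into $\prod\chi_{n-i+1}^{\mu_{\lfloor c\rfloor}(i)}$. Your use of the reversal identity $\mu_{\lceil c\rceil}(i)=\mu_{\lfloor c\rfloor}(n-i+1)$ alone would yield $\prod\chi_j^{\mu_{\lfloor c\rfloor}(j)}$ instead and leave the stray $\chi_1/\chi_n$ unaccounted for.
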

	\begin{proof}
		Denote the commuting variety $\{(x,y)\in \lieg\times\lieg|[x,y]=0\}_{\red}$ by $\Cfrak$. Let $\Cfrak_r$ denote the regular locus in $\Cfrak$.
		
		By \cite[Proposition 3.8.6]{bgcombo},
		\[\ch_A((Rp_*\Acal_c)|_{\ebf_\lambda})=\lambda((T^*_{\Cfrak_r^A}\Cfrak_r)|_{\ebf_\lambda})\cdot \sum_{\lieb_w\text{ almost adapted}}\lambda((T^*_{\Zfrak_r^{w,A}}\Zfrak_r)_{\ebf_\lambda})^{-1}\lambda(\Rfrak_w^*)\prod_{i=1}^n\chi_{n-i+1}^{\mu_{\lceil c\rceil }(i)}.\]
		One has that $\lambda(\Rfrak_w)=\lambda(([\lien_w,\Xfrak_w]/\lieg_{1,1})^*)$. Moreover, by \cite[Lemma 3.9.1]{bgcombo}, 
		\begin{align*}\lambda((T^*_{\Cfrak_r^A}\Cfrak_r)|_{\ebf_\lambda})&=g_\lambda\cdot \lambda((\lieg/(\Stab(e)\oplus\lieh))^*)\\
			\lambda((T^*_{\Zfrak_r^{w,A}}\Zfrak_r)_{\ebf_\lambda})&=\lambda(\lien_w)\oplus \lambda(\big((\lien_w\oplus \Xfrak_w)/(\lieg\oplus\lieg)^A\cap (\lien_w\oplus \Xfrak_w)\big)^*)
		\end{align*}
		
		Let $R$, resp. $R^+$, denote the set of all roots, resp. positive roots with respect to $\lieb_w$. Also recall that the weight of the box $x$ labeled $i$ is defined by $\chi_i=q^{a'(x)}t^{l'(x)}$. We have that
		\begin{align*}
			\ch_A(\lieh^*)=&n-1\\
			\ch_A(\lieg^*)=&n-1+\sum_{\alpha_1,\alpha_2\in R}q^{\alpha_1(h_1)}t^{\alpha_2(h_2)}=n-1+\sum_{1\leq i\neq j\leq n}\chi_i\chi_j^{-1}\\
			\ch_A(\Stab(e)^*)=&\sum_{(a,b)\in YT_\lambda}q^{-a}t^{-b}=\sum_{i=1}^n \chi^{-1}_i\\
			\ch_A(\lien_w)=&\sum_{\alpha\in R^+}q^{-\alpha(h_1)}t^{-\alpha(h_2)}=\sum_{1\leq i< j\leq n}\chi_i^{-1}\chi_j\\
			\ch_A(\lien_w^*\oplus\{0\})=&\sum_{\alpha\in R^+}q^{1+\alpha_1(h_1)}t^{\alpha_2(h_2)}=q\sum_{1\leq i< j\leq n}\chi_i\chi_j^{-1}\\
			\ch_A(\{0\}\oplus \Xfrak_w^*)=&\sum_{\alpha\in R^+}q^{\alpha(h_1)}t^{1+\alpha(h_2)}+(n-1)t+\sum_{\alpha\in R^+, \text{ simple}} q^{-\alpha(h_1)}t^{1-\alpha(h_2)}\\
			=&(n-1)t+t\sum_{1\leq i< j\leq n}\chi_i\chi_j^{-1}+t\sum_{i=1}^{n-1}\chi_{i+1}\chi_i^{-1}\\
			\chi([\lien_w,\Xfrak])=&(n-1)qt+\sum_{\alpha\in R^+}q^{1+\alpha_1(h_1)}t^{1+\alpha_2(h_2)}=(n-1)qt+qt\sum_{1\leq i< j\leq n}\chi_i\chi_j^{-1}
		\end{align*}

		Since $\lambda(V)=\Omega(\ch_A(V))$ and $\lambda(V/V^A)=\Omega^0(\ch_A(V))$, we further deduce that
  \[\lambda((\lieg/(\Stab(e)\oplus\lieh))^*)\cdot \bigg(\lambda(\lien_w)\oplus \lambda(\big((\lien_w\oplus \Xfrak_w)/(\lieg\oplus\lieg)^A\cap (\lien_w\oplus \Xfrak_w)\big)^*)\bigg)^{-1}\]
		\begin{align*}
			=&\Omega(\sum_{1\leq i\neq j\leq n}\chi_i\chi_j^{-1})\Omega((n-1)qt+qt\sum_{1\leq i< j\leq n}\chi_i\chi_j^{-1})\\
			&\bigg(\Omega(\sum_{i=1}^n \chi^{-1}_i)\Omega(\sum_{1\leq i< j\leq n}\chi_i^{-1}\chi_j)\Omega^0(q\sum_{1\leq i< j\leq n}\chi_i\chi_j^{-1})\bigg)^{-1}\\
			&\bigg(\Omega((n-1)t)\Omega^0(t\sum_{1\leq i< j\leq n}\chi_i\chi_j^{-1})\Omega^0(t\sum_{i=1}^{n-1}\chi_{i+1}\chi_i^{-1})\bigg)^{-1}\\
			=&\big(\frac{1-qt}{1-t}\big)^{n-1}\Omega^0((1+qt-q-t)\sum_{1\leq i< j\leq n}\chi_i\chi_j^{-1})\Omega(\sum_{i=1}^n \chi^{-1}_i)^{-1}\Omega^0(t\sum_{i=1}^{n-1}\chi_{i+1}\chi_i^{-1})^{-1}\\
			=&\big(\frac{1-qt}{1-t}\big)^{n-1}\sum_{\sigma\in  \ASYT_\ebf}\frac{
				\hat{\prod}_{1\leq i<j\leq n}\omega(\frac{\chi_{i}}{\chi_j})}{\hat{\prod}_i{(1-\chi_i^{-1})}\hat{\prod}_{i=1}^{n-1}(1-t\frac{\chi_{i+1}}{\chi_{i}})}\\
			=&\frac{(1-qt)^{n-1}}{(1-t)^{n-1}(-t)^{n-1}}\sum_{\sigma\in  \ASYT_\ebf}\frac{\chi_1/\chi_n \Xi_\sigma }{\hat{\prod}_{i=1}^{n-1}(1-\frac{\chi_{i}}{t\chi_{i+1}})}
		\end{align*}
		The proposition now follows from the equality
		\[(\mu_{\lceil c\rceil}(1),\cdots, \mu_{\lceil c\rceil}(n))=(\mu_{\lfloor c\rfloor}(1),\cdots, \mu_{\lfloor c\rfloor}(n))+(1,0,\cdots,0,-1).\qedhere\]
	\end{proof}
	\section{The cuspidal and Catalan dg modules and shuffle generators}
	\subsection{The Catalan dg module}
	Following Section \ref{subsection_cuspidal} closely, we define an analogue of the cuspidal dg module. On $\Bcal$ we also have the vector bundle $\ul{[\lien,\lien]^*}$ (resp. $\ul{[\lien,\lien]}$) whose total space equals $G\times^B[\lien,\lien]^*$ (resp. $G\times^B[\lien,\lien]$). Let $\pi_{[\lien,\lien]}: G\times^B[\lien,\lien]\to \Bcal$ be the projection and $\iota_{[\lien,\lien]}: \Bcal\to G\times^B [\lien,\lien]$ be the zero section.
	The Koszul complex $(\wedge^\bullet \pi_{[\lien,\lien]}^* \ul{[\lien,\lien]^*},\partial_{[\lien,\lien]})$, with differential given by contraction with the canonical section of $\pi_{[\lien,\lien]}^*\ul{[\lien,\lien]}$, is quasi-isomorphic to $(\iota_{[\lien,\lien]})_*\Ocal_\Bcal$. 
	Put $q_{[\lien,\lien]}: G \times^B (\lien \times \lien) \to G \times^B [\lien,\lien]$ by  $(g,x,y)\mapsto (g,[x,y])$. 
 
 We define \begin{align}\label{defn_A'}
		\Acal'_c:=(\pi_{G\times^B(\lien\times\lien)\to\Bcal})^*\Lcal_{\mu_{\lfloor c\rfloor}}\otimes\big( \wedge^\bullet (\pi_{[\lien,\lien]}\circ q_{[\lien,\lien]})^* \ul{([\lien,\lien])^*},q_{[\lien,\lien]}^*\partial_{[\lien,\lien]})\big).
	\end{align}
	and call it the Catalan dg module at slope $c$.\\
	\textbf{Warning:} Note that the definitions (\ref{defn_A}) and (\ref{defn_A'}) use different line bundles.

	Write $p': G\times^B(\lien\times\lien)\to \lieg\times\lieg: (g,x,y)\mapsto (g\cdot x, g\cdot y)$. 
	The $\C^*\times\C^*$-actions in Section \ref{C*actions} induce a $\C^*\times\C^*$-action on $(\Rrm p'_*\Acal'_c)|_\ebf$.
	Similar to Proposition \ref{characterofA'}, we have that 
	\begin{prop}\label{character_of_A}The bigraded character of the stalk $\Rrm (p_{G\times^B(\lien\times\lien)})_*\Acal'_c)|_\ebf$ is given by 
		\begin{align}\label{formula_chA'}
			\ch_A((\Rrm p'_*\Acal'_c)|_{\ebf_\lambda})=
			g_{\lambda}\sum_{\sigma\in \SYT_\lambda}
			\frac{\Xi_\sigma\prod_{i=1}^n \chi_i^{\mu_c(n-i+1)}}
			{\hat{\prod}_{i=1}^{n-1}(1-qt\frac{\chi_{i}}{\chi_{i+1}})}
		\end{align}
		where $\Xi_\sigma$ is as defined in (\ref{Xi}).
	\end{prop}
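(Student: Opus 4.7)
The plan is to mirror the proof of Proposition \ref{characterofA'} line by line, replacing the ``mixed'' geometry $G\times^B(\lien\times\Xfrak)\to G\times^B\lieb$ governing the cuspidal dg module by the ``symmetric'' geometry $G\times^B(\lien\times\lien)\to G\times^B[\lien,\lien]$ governing the Catalan dg module. Writing $\Zfrak'=G\times^B(\lien\times\lien)$ and $p':\Zfrak'\to \lieg\times \lieg$, the Atiyah-Bott type localization formula of \cite[Proposition 3.8.6]{bgcombo} applied to $\ebf_\lambda\in \Cfrak_r$ yields
\[
\ch_A\bigl((\Rrm p'_*\Acal'_c)|_{\ebf_\lambda}\bigr)=\lambda\bigl((T^*_{\Cfrak_r^A}\Cfrak_r)|_{\ebf_\lambda}\bigr)\cdot \!\!\!\sum_{\lieb_w\text{ adapted}}\!\!\frac{\lambda(([\lien_w,\lien_w])^*)}{\lambda((T^*_{\Zfrak'^{w,A}_r}\Zfrak'_r)|_{\ebf_\lambda})}\cdot \prod_{i=1}^n \chi_i^{\mu_{\lfloor c\rfloor}(n-i+1)},
\]
where the last factor is the contribution of the pullback of $\Lcal_{\mu_{\lfloor c\rfloor}}$ to $\Zfrak'$ in (\ref{defn_A'}).

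The first step is to identify the indexing set of the sum. Because $\Zfrak'^{w,A}_r$ requires both $e_1$ and $e_2$ to lie in $\lien_w$ (not just the larger $\Xfrak_w$), the almost-adapted Borels are replaced by the genuinely adapted ones, and following \textbf{verbatim} the argument of \cite[Lemma 4.4.1]{bgcombo} these are in bijection with $\SYT_\lambda$ rather than $\ASYT_\lambda$. The commuting variety factor $\lambda((T^*_{\Cfrak_r^A}\Cfrak_r)|_{\ebf_\lambda})=g_\lambda\cdot\lambda((\lieg/(\Stab(\ebf)\oplus\lieh))^*)$ is unchanged. The second step is to redo the character computation of the tangent and normal spaces for the new geometry. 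The only characters that change compared to the proof of Proposition \ref{characterofA'} are
\[
\ch_A(\{0\}\oplus \lien_w^*)=t\!\!\sum_{1\leq i<j\leq n}\chi_i\chi_j^{-1},\qquad \ch_A([\lien_w,\lien_w])=qt\!\!\sum_{1\leq i<j\leq n,\,j-i\geq 2}\chi_i\chi_j^{-1};
\]
in particular the extra $(n-1)t$ term and the $t\sum_{i=1}^{n-1}\chi_{i+1}\chi_i^{-1}$ term from the diagonal and super-diagonal parts of $\Xfrak_w^*$ disappear, and correspondingly the $(n-1)qt$ term and the simple-root contributions $qt\sum_i \chi_i\chi_{i+1}^{-1}$ drop out of $[\lien_w,\lien_w]$.

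The third step is to plug these into the $\Omega$-calculus of the proof of Proposition \ref{characterofA'} and collect. The combinatorial core $\Omega^0((1+qt-q-t)\sum_{i<j}\chi_i\chi_j^{-1})$ survives unchanged and produces, for each fixed $\sigma$, the same factor $\hat{\prod}_{1\leq i<j\leq n}\omega(\chi_i/\chi_j)$, i.e.\ the $\Xi_\sigma$ of (\ref{Xi}) up to the $\hat{\prod}_i(1-\chi_i^{-1})^{-1}$ that comes from $\Omega(\sum_i\chi_i^{-1})^{-1}$. After cancellation, the only surviving ``residual'' denominator—whose role is to encode which simple-root coordinates the tableau condition kills—comes from $\Omega^0$ applied to the cotangent weight $qt\sum \chi_i\chi_j^{-1}$ of $[\lien_w,\lien_w]$, producing the factor $\hat{\prod}_{i=1}^{n-1}(1-qt\chi_i/\chi_{i+1})^{-1}$ expected by (\ref{formula_chA'}). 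The line bundle $\Lcal_{\mu_{\lfloor c\rfloor}}$ contributes exactly $\prod_{i=1}^n \chi_i^{\mu_{\lfloor c\rfloor}(n-i+1)}$, matching the statement.

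The main technical point, as in the cuspidal case, is correctly tracking which linear denominators vanish at each fixed point so that the ``restricted product'' $\hat{\prod}$ picks up only non-zero factors; this is exactly the mechanism that forces the sum to localize to $\SYT_\lambda$. Concretely, the denominator factor $(1-qt\chi_i/\chi_{i+1})$ vanishes precisely when the box labelled $i+1$ lies immediately to the right of the box labelled $i$, which is compatible with the SYT condition but would cause spurious poles for tableaux violating column-strictness. Verifying that the restricted product gives a well-defined finite answer—and matches the localization formula (\ref{formula_localization}) applied to $\Rrm p'_*\Acal'_c$—is the only nontrivial bookkeeping; everything else is a direct adaptation of the cuspidal argument.
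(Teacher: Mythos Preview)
Your approach is correct and is exactly what the paper intends by ``Similar to Proposition \ref{characterofA'}''; the paper gives no further details beyond that remark. Two minor corrections to your third step: first, the denominator $\hat\prod_{i=1}^{n-1}(1-qt\chi_i/\chi_{i+1})^{-1}$ arises not from applying $\Omega^0$ to the weights of $[\lien_w,\lien_w]^*$ itself (which by your own formula \emph{omits} the simple-root terms $qt\chi_i\chi_{i+1}^{-1}$), but rather from the resulting deficit when you recombine all contributions into $(1+qt-q-t)\sum_{i<j}\chi_i\chi_j^{-1}-qt\sum_{i=1}^{n-1}\chi_i\chi_{i+1}^{-1}$ and apply $\Omega^0$; second, $1-qt\chi_i/\chi_{i+1}=0$ means $\chi_{i+1}=qt\chi_i$, i.e.\ box $i+1$ sits diagonally northeast of box $i$, not immediately to its right---and for a standard Young tableau this configuration is impossible, so that particular $\hat\prod$ is in fact an ordinary product.
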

	\begin{rem}
		\cite[Theorem 4.5.1]{bgcombo} states that the $q,t$-character of the stalk of the rank $n!$ vector bundle on the commuting variety at $\ebf_\lambda$ is 
		\begin{align}
			\label{formula_procesi}
			\frac{g_\lambda}{(1-q)^n(1-t)^n} \sum_{\sigma\in \SYT_\lambda}
			{\Xi_\sigma}.
		\end{align}
		
		As a comparison, the dg algebra in \cite{isospec} is defined by pulling $(\wedge^\bullet \pi_\lien^* \ul{\lien^*},\partial_\lien)$ back to $G\times^B(\lieb\times\lieb)$ and the Catalan dg algebra is defined by pulling $(\wedge^\bullet \pi_{[\lien,\lien]}^* \ul{[\lien,\lien]^*},\partial_{[\lien,\lien]})$ back to $G\times^B(\lien\times\lien)$.
		
		Compare (\ref{formula_procesi}) and (\ref{formula_chA'})  when $\prod_{i=1}^n \chi_i^{\mu_c(n-i+1)}=1$. The differences lie in the terms $(1-q)^n(1-t)^n$ and $\prod_{i=1}^{n-1}(1-qt\frac{\chi_{i}}{\chi_{i+1}})$.
		
		The term $(1-q)^n(1-t)^n$ arises from the distinction between the support being $\lieb\times \lieb$ versus $\lien\times\lien$ (in $\mathfrak{gl}_n$). The term $\prod_{i=1}^{n-1}(1-qt\frac{\chi_{i}}{\chi_{i+1}})$ results from the complex being defined by $\lien^*$ versus $[\lien,\lien]^*$. 
	\end{rem}    
   	For a $ \C^*\times\C^*$-module $\Fcal$, we use the notation $q^at^b\Fcal$ to shift the original action by the weight $(a,b)$.
	We will prove in the next sections that the descents of $Rp'_*\Acal'_c$ and $q^{1-n}Rp_*\Acal_c$ correspond to the same equivariant $K$-theory classes on $\Hilb^n$. We conjecture that
	\begin{conj}\label{conj_A_A'}
		There exists a $\GL_n\times \C^*\times\C^*$-equivariant isomorphism: \[Rp'_*\Acal'_c\cong q^{1-n} Rp_*\Acal_c.\]
	\end{conj}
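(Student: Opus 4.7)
My plan is to proceed in two stages: first establish the identity at the level of $\GL_n \times A$-equivariant $K$-theory (where $A = \C^*\times\C^*$), then attempt to upgrade it to a sheaf-theoretic isomorphism in the equivariant derived category. The first stage is within reach of the computational tools already assembled in the paper; the second stage is the genuine content of the conjecture.

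For the $K$-theoretic stage I would apply Thomason localization. Both $Rp_*\Acal_c$ and $Rp'_*\Acal'_c$ are $A$-equivariant complexes on the commuting variety $\Cfrak \subset \lieg \times \lieg$, whose $A$-fixed points are precisely the principal nilpotent pairs $\ebf_\lambda$ for $\lambda \vdash n$. Propositions \ref{characterofA'} and \ref{character_of_A} give the stalks at $\ebf_\lambda$ as explicit sums over $\ASYT_\lambda$ and $\SYT_\lambda$ respectively, so the identity reduces to a combinatorial comparison
\[
q^{1-n}\cdot g_\lambda \frac{(1-qt)^{n-1}}{(1-t)^{n-1}(-t)^{n-1}}\sum_{\sigma\in\ASYT_\lambda}\frac{\Xi_\sigma\prod_{i=1}^n\chi_{n-i+1}^{\mu_{\lfloor c\rfloor}(i)}}{\hat{\prod}_{i=1}^{n-1}(1-\chi_i/(t\chi_{i+1}))} = g_\lambda\sum_{\sigma\in \SYT_\lambda}\frac{\Xi_\sigma\prod_{i=1}^n \chi_i^{\mu_c(n-i+1)}}{\hat{\prod}_{i=1}^{n-1}(1-qt\chi_i/\chi_{i+1})},
\]
with the prefactor $q^{1-n}$ accounting for the weight difference between the line bundles $\Lcal_{\mu_{\lceil c\rceil}}$ and $\Lcal_{\mu_{\lfloor c\rfloor}}$. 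I would attack this identity by induction on $n$, peeling off a corner box of $\lambda$: the extra fillings permitted in an $\ASYT$ (column labels may decrease by one) should, upon being summed against the prefactor $(1-qt)^{n-1}/[(1-t)^{n-1}(-t)^{n-1}]$, telescope into the $\SYT$ sum with the modified denominator $(1-qt\chi_i/\chi_{i+1})$. A parallel computation already appears implicitly in the paper's later comparison with the shuffle generators $P_{m,n}$, so borrowing those manipulations seems natural.

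To upgrade $K$-theoretic equality to a derived-category isomorphism, the most natural approach is to realize both sides as pushforwards from a common dg space. A candidate is a dg module $\wt{\Acal}_c$ on $G\times^B(\lien\times\lieb)$ given by the Koszul resolution of $\Bcal$ inside $G\times^B\lieb$ (twisted by a suitable line bundle), from which $\Acal_c$ and $\Acal'_c$ should both arise by derived base change along the inclusions $\lien\times\Xfrak\hookrightarrow \lien\times\lieb$ and $\lien\times\lien\hookrightarrow \lien\times\lieb$ respectively. An alternative route is to descend both sides to $\Coh^A(\Hilb^n)$ via the functor $\Psi_c$ of (\ref{descent_functor}), use the Bridgeland-King-Reid equivalence to match them on $\Hilb^n$, and then lift back an equivariant isomorphism on $\lieg\times\lieg$.

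The principal obstacle will be precisely this sheaf-theoretic upgrade. Localization identifies virtual classes in $K$-theory but not objects in the equivariant derived category, and $\Cfrak$ is singular and non-reduced, so purity or formality arguments are delicate. Moreover, since $\Acal_c$ and $\Acal'_c$ live over different base varieties, there is no direct morphism between them; any isomorphism of their pushforwards must be constructed indirectly through an intermediate object or a universal characterization. The preprint \cite{gn_commuting_idempotent} of Gorsky-Negu\c{t}, whose Conjecture 2.2 is the non-derived shadow of the present conjecture, may supply the explicit sheaf map required to bootstrap into a derived statement; combining such a map with the pushforward description in Proposition \ref{prop_grN_as_pushforward} seems the most promising path to a full proof.
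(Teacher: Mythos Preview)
The statement you are attempting to prove is explicitly labeled a \emph{conjecture} in the paper and is not proven there. The paper establishes only the $K$-theoretic shadow: Proposition \ref{prop_P_A} shows that both $Rp_*\Acal_c$ and $Rp'_*\Acal'_c$ have the same equivariant $K$-theory class after descent to $\Hilb^n$ (up to the twist $q^{1-n}$), by identifying each with $P_{m,n}\cdot 1$. The sheaf-theoretic isomorphism is left open, and the paper notes that even the non-derived version is an open conjecture (Conjecture 2.2 of \cite{gn_commuting_idempotent}). So your honest assessment that ``the second stage is the genuine content of the conjecture'' and that there is a ``principal obstacle'' is exactly right: there is no proof to compare against.

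On the $K$-theoretic stage, your approach and the paper's are close in spirit but differ in execution. You propose to attack the combinatorial identity between the $\ASYT$ and $\SYT$ sums directly, by induction on $n$ and peeling off a corner box. The paper instead routes both sides through the shuffle element $P_{m,n}$: it invokes Negu\c{t}'s two presentations (\ref{catalanshuffle}) and (\ref{cuspidalshuffle}) of $P_{m,n}$, which already encode the $\SYT$/$\ASYT$ dichotomy, and then proves the key pointwise identity (\ref{equality}) relating the factors $\Theta(\sigma(i))$ (from the shuffle side) and $\Xi(\sigma(i))$ (from the stalk formulas). This is essentially the ``parallel computation'' you allude to; your inductive telescoping would have to reproduce it. The paper's route has the advantage of giving the link to $P_{m,n}\cdot 1$ for free, which is what is actually needed downstream.

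Your proposed strategies for the upgrade (a common roof over $G\times^B(\lien\times\lieb)$, or descent through BKR) are reasonable directions, but neither is carried out in the paper and both face the obstacles you name. In particular, note the paper's remark immediately following the conjecture: it is not even known a priori that $Rp'_*\Acal'_c$ is concentrated in a single cohomological degree, whereas $Rp_*\Acal_c$ is (being the associated graded of a $\Dcal$-module). Any genuine proof would have to establish this concentration as a byproduct, which your proposal does not address.
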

 One should note that a priori it is not clear whether $Rp'_*\Acal'_c$ is concentrated in one degree. In contrast, the sheaf $Rp_*\Acal_c$ is automatically concentrated in one degree as it is the associated graded of $\Nbb_c$. 
	\subsection{Cuspidal vs Catalan}\label{section_shuffle}
	\subsubsection{Shuffle algebras}
	Define \[K=\C(q,t)(z_1,z_2,\cdots)^{S_\infty}.\]
	%which has a pairing defined by\[([\alpha],[\beta])=\Gamma(\Hilb^n,\alpha\otimes\beta)\] for $[\alpha],[\beta]\in K^A(\Hilb^n)$ and $0$ otherwise.
	
	We endow $K$ with a $\C(q,t)$-algebra structure via the shuffle product
	\[f(z_1,\cdots,z_k)*g(z_1,\cdots, z_\ell)=\frac{1}{k!\ell!}\Sym\big[f(z_1,\cdots,z_k)g(z_{k+1},\cdots,z_{k+\ell})\prod_{i=1}^k\prod_{j=k+1}^{k+\ell}\omega(\frac{z_i}{z_j})\big].\]
	Here $\Sym$ denotes symmetrization.
	
	\begin{defn}
		The shuffle algebra $\mathfrak{A}$ is defined as the subspace of $K$ consisting of rational functions in the form of 
		\[F(z_1,\cdots,z_k)=\frac{f(z_1,\cdots,z_k)\prod_{1\leq i<j\leq k}(z_i-z_j)^2}{\prod_{1\leq i\neq j\leq k}(z_i-qz_j)(z_i-tz_j)}\]
		such that $f$ is a symmetric Laurent series satisfying the wheel conditions:
		\[f(z_1,z_2,z_3,\dots)=0\text{ if } \{\frac{z_1}{z_2},\frac{z_2}{z_3},\frac{z_3}{z_1}\}=\{q,t,\frac{1}{qt}\}.\]
	\end{defn}
	It is shown in \cite{SV_Hilbert} that there is an isomorphism between $\mathfrak{A}$ and the positive half of the elliptic Hall algebra. Moreover,
	\begin{thm}\label{thm_geometric_action}
		(\cite{feigin-tsy, SV_Hilbert})
		There exists a geometric action of the algebra $\mathfrak{A}$ on the vector space  $\bigoplus_{n\geq 0} K^A(\Hilb^n)\otimes_{\C[q^\pm,t^\pm]}\C(q,t)$.
	\end{thm}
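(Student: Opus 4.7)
The plan is to construct the action geometrically via correspondences on nested Hilbert schemes, following the strategy of Schiffmann-Vasserot and Feigin-Tsymbaliuk. For each $n \geq 0$ and $k \geq 1$, consider the nested Hilbert scheme
\[
\Hilb^{n,n+k} := \{(I,I') \in \Hilb^n \times \Hilb^{n+k} \mid I \supset I'\}
\]
together with its two projections $p_-:\Hilb^{n,n+k} \to \Hilb^n$ and $p_+:\Hilb^{n,n+k} \to \Hilb^{n+k}$, and the tautological line bundle $\Lscr$ whose fiber at $(I,I')$ is $\det(I/I')$. I would first recall (from \cite{nakajima}) that $\Hilb^{n,n+1}$ is smooth of the expected dimension and carries a natural $A$-action; for $k\geq 2$ the nested scheme is singular, but the relevant Nakajima-type correspondences can still be defined at the level of $K$-theory by iteration, using the ``ext'' bundles on products $\Hilb^n\times\Hilb^{n+k}$ whose Euler classes encode the shuffle kernels.

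The key steps are as follows. First, for each symmetric Laurent polynomial $f(z_1,\dots,z_k)$ arising in an element $F\in\mathfrak{A}$, define an operator $F\star(-):K^A(\Hilb^n)\to K^A(\Hilb^{n+k})$ by the formula
\[
F\star \alpha \;=\; p_{+\,*}\!\left(\, p_-^*\alpha \,\otimes\, f(\Lscr_1,\dots,\Lscr_k) \,\otimes\, \mathrm{Ext}\text{-kernel}\,\right),
\]
where the nested scheme is replaced, after localization to torus fixed points, by its iterated $k=1$ version. Second, compute the matrix coefficients of $F\star$ in the fixed-point basis $\{[I_\lambda]\}$: by Bott localization and Haiman's description of the tangent characters of $\Hilb^n$ at $I_\lambda$, these matrix coefficients become explicit rational functions in $\chi_i$, and one sees by direct calculation that a chain $I_{\lambda}\supset I_{\lambda^{(1)}}\supset\cdots\supset I_{\lambda^{(k)}}=I_\mu$ of one-box removals contributes exactly the symmetrized kernel in the shuffle product, with the omega factors $\omega(z_i/z_j)$ arising from the tangent characters of the product. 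Third, check that the resulting assignment $F\mapsto F\star$ is a well-defined algebra homomorphism: the symmetrization in the shuffle product is geometrically realized by summing over the $k!$ orderings of the removed boxes, and the wheel condition is forced on operators of the prescribed form because a would-be pole at $\{z_1/z_2,z_2/z_3,z_3/z_1\}=\{q,t,(qt)^{-1}\}$ corresponds to a three-box configuration which does not actually occur in any Young-diagram chain, so only the wheel-vanishing elements of $K$ produce well-defined operators.

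The main obstacle will be the compatibility of the product: one must show that composing $F\star$ with $G\star$ equals $(F*G)\star$ on the nose, and this is subtle because the push-pull through $\Hilb^{n,n+k,n+k+\ell}$ is not literally a fibration. The standard remedy is to reduce the verification to matrix coefficients in the fixed-point basis, where both sides become finite sums of rational functions, and then invoke a residue/interpolation argument: both sides lie in the finite-dimensional span of Macdonald polynomials $\wt{H}_\lambda$ after applying $\kappa$, and they agree on sufficiently many test elements because their normal bundle contributions match term by term. A secondary subtlety is integrality of the operators on $K^A(\Hilb^n)$ itself (not just after tensoring with $\C(q,t)$); this is why the statement is phrased over the fraction field, and I would not attempt the integral refinement here. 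Once the algebra action is in place, one observes that the grading shift $n\mapsto n+k$ combined with the Laurent degree of $f$ reproduces the bigrading on $\mathfrak{A}$ by $(k,\deg f)$, matching the $(m,n)$-labeling of the generators $P_{m,n}$ used in the rest of the paper.
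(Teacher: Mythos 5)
The paper does not prove this statement; it is imported verbatim from \cite{feigin-tsy, SV_Hilbert}, so there is no internal proof to compare against, and your sketch has to be judged against the arguments in those references. Your overall strategy (Nakajima-type correspondences on nested Hilbert schemes plus Bott localization at the fixed points $I_\lambda$) is the right one and is indeed how Feigin--Tsymbaliuk and Schiffmann--Vasserot proceed. However, as written the sketch has a genuine gap at its central step. For $k\geq 2$ the nested Hilbert scheme $\Hilb^{n,n+k}$ is not merely singular: it has components of excess dimension and carries no canonical (virtual or derived) structure for which the naive push--pull you write down is well defined, let alone matches the shuffle kernel. Your proposed fix --- ``replace it, after localization, by its iterated $k=1$ version'' --- silently changes the construction into: define operators $e_d$ only for the degree-one elements $z_1^d$ via the smooth correspondence $\Hilb^{n,n+1}$, and extend multiplicatively. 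That only defines an action of the subalgebra of $\mathfrak{A}$ generated in degree one. To get an action of all of $\mathfrak{A}$ you must invoke the nontrivial theorem (due to Schiffmann--Vasserot, refined by Negu\c{t}) that the shuffle algebra with wheel conditions \emph{is} generated by $\{z_1^d\}_{d\in\Z}$; your proposal never states or uses this, and without it the theorem as phrased --- an action of $\mathfrak{A}$, the full wheel-condition algebra --- is not established.

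Two smaller points. First, once the action is defined via degree-one generators, the ``compatibility of the product'' is not an interpolation problem: the content is the identity of rational functions saying that the fixed-point matrix coefficients of $e_{d_1}\cdots e_{d_k}$ depend only on the shuffle product $z_1^{d_1}\ast\cdots\ast z_1^{d_k}$ as an element of $K$ (i.e.\ the kernel of the free algebra surjecting onto the degree-one-generated subalgebra acts by zero). Your localization computation with chains of one-box removals and $\omega$-factors from tangent characters is exactly this identity, so you should present it as a direct verification rather than appeal to agreement ``on sufficiently many test elements,'' which as stated is circular. Second, your discussion of the wheel conditions has the logic reversed: the wheel conditions are not forced on operators to make them well defined; they characterize which symmetric rational functions lie in the span of shuffle products of degree-one elements, which is again the generation statement you are missing.
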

	\subsubsection{Shuffle generators}
	Following \cite{negutwalgebra}, we define\footnote{Note that the $P_{m,n}$ here and in \cite{negutwalgebra} is denoted by $\wt{P}_{m,n}$ in \cite{gn}, as a certain modification of the $P_{m,n}$ in \cite{negutshuffle}.}
	\begin{align}
		\label{catalanshuffle}
		P_{n,m}=\Sym\bigg(\frac{\prod_{i=1}^n z_{n-i+1}^{\lfloor ic\rfloor -\lfloor (i-1)c\rfloor}}{\prod_{i=1}^{n-1}(1-qt\frac{z_{i}}{z_{i+1}})}\prod_{1\leq i<j\leq n} \omega(\frac{z_i}{z_j})\bigg)\end{align}
	According to \cite{Burban-Schiffmann}, $P_{n,m}$ with $n\geq 1$, $m\in \Z$ generate the shuffle algebra $\mathfrak{A}$.
	
	By \cite[(2.34) and (2.35)]{negutwalgebra}, (\ref{catalanshuffle}) equals
	\begin{align}
		\label{cuspidalshuffle}P_{n,m}=\bigg(\frac{(1-qt)}{(1-t)(-qt)}\bigg)^{n-1}\Sym\bigg(\frac{\prod_{i=1}^n z_{n-i+1}^{\lfloor ic\rfloor -\lfloor (i-1)c\rfloor}}{\prod_{i=1}^{n-1}(1-\frac{z_{i}}{tz_{i+1}})}\prod_{1\leq i<j\leq n} \omega(\frac{z_i}{z_j})\bigg).\end{align}
	\begin{prop}(\cite[Proposition 5.5]{negutflag}, \cite[(49)]{gn})\label{Pmn}Under the action in Theorem \ref{thm_geometric_action}, 
		\begin{align}P_{n,m}\cdot 1=&\bigg(\frac{(1-q)(1-t)}{ (1-qt)}\bigg)^n\sum_{ \lambda\vdash n}\frac{\wt{H}_\lambda}{g_\lambda }\sum_{\sigma\in \SYT_\lambda}\frac{\prod_{i=1}^n \chi_{n-i+1}^{\lfloor ic\rfloor -\lfloor (i-1)c\rfloor}}{\prod_{i=1}^{n-1}(1-qt\frac{\chi_{i}}{\chi_{i+1}})}\Theta_\sigma\label{Pmn_catalan}\\%\prod_{1\leq i\leq n}^{x\in\mu}\omega^{-1}(\frac{x}{\chi_i})\\
			=&\frac{(1-t)(1-q)^{n-1}}{(1-qt)(-qt)^{n-1}}\sum_{\lambda \vdash n}\frac{\wt{H}_\lambda}{g_\lambda  }\sum_{\sigma\in \ASYT_\lambda}\frac{\prod_{i=1}^n \chi_{n-i+1}^{\lfloor ic\rfloor -\lfloor (i-1)c\rfloor}}{\prod_{i=1}^{n-1}(1-\frac{\chi_{i}}{t\chi_{i+1}})}\Theta_\sigma    \label{Pmn_cuspidal}
		\end{align}
		where \begin{align*}
			&\Theta_\sigma:=\prod_{i=1}^n (1-qt\chi_i)\prod_{1\leq i< j\leq n}\omega^{-1}(\frac{\chi_j}{\chi_i}).
		\end{align*}
	\end{prop}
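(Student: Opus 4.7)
The proposition is essentially a repackaging of existing results. The first identity (\ref{Pmn_catalan}) is precisely the combination of \cite[Proposition 5.5]{negutflag} and \cite[(49)]{gn} with the catalan shuffle presentation (\ref{catalanshuffle}) of $P_{n,m}$. My plan is therefore to record the underlying localization formula and then derive the second identity (\ref{Pmn_cuspidal}) from it via the equivalence between the two shuffle presentations.

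I would begin by recalling from \cite{negutflag, gn} that, for any shuffle element $F(z_1,\ldots,z_n) \in \mathfrak{A}$, the class $F \cdot 1 \in K^A(\Hilb^n)$ has fixed-point localization at $I_\lambda$ given, after transport via the isomorphism $\kappa$ of (\ref{K_symmetric}), by a sum over orderings $\sigma$ of the boxes of $\lambda$ of the specialization $F(\chi_{\sigma(1)},\ldots,\chi_{\sigma(n)})$ weighted by residues at the poles of $F$ and a global factor matching the $\Theta_\sigma$ of the proposition. Substituting the catalan expression (\ref{catalanshuffle}), the denominator $\prod_{i=1}^{n-1}(1 - qt\chi_{\sigma(i)}/\chi_{\sigma(i+1)})$ combined with the wheel conditions on the $\omega(z_i/z_j)$ forces the surviving orderings $\sigma$ to range over $\SYT_\lambda$; inserting the localization coefficients $\wt{H}_\lambda/g_\lambda$ from (\ref{formula_localization}) yields (\ref{Pmn_catalan}).

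For the second identity (\ref{Pmn_cuspidal}), I would substitute the cuspidal shuffle form (\ref{cuspidalshuffle}) in place of (\ref{catalanshuffle}); the equality of these two presentations is exactly the content of \cite[(2.34) and (2.35)]{negutwalgebra}. In the localization, the relaxed denominator $\prod_{i=1}^{n-1}(1 - \chi_{\sigma(i)}/(t\chi_{\sigma(i+1)}))$ fails to vanish precisely when $\chi_{\sigma(i)}/\chi_{\sigma(i+1)} \neq t$ for each $i$; interpreting the content ratios in terms of the positions of boxes in $\lambda$, this is exactly the almost standard condition that labels may decrease by $1$ going up a column. Tracking the overall prefactor $((1-qt)/((1-t)(-qt)))^{n-1}$ from (\ref{cuspidalshuffle}) together with the $((1-q)(1-t)/(1-qt))^n$ coming from the vacuum localization then produces the prefactor $(1-t)(1-q)^{n-1}/((1-qt)(-qt)^{n-1})$ appearing in (\ref{Pmn_cuspidal}).

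The main obstacle is the combinatorial identification of the surviving terms of the $S_n$-symmetrization in each shuffle formula. One must check that every $\sigma \in \ASYT_\lambda$ (respectively $\SYT_\lambda$) contributes with exactly the residue $\Theta_\sigma$, and that no further cancellations arise among the $\omega$-factors beyond those captured by the tableau condition. The SYT case is classical, whereas the ASYT case is more delicate because the relaxed denominator interacts non-trivially with the wheel conditions; this matches the eccentric correspondence of \cite{negutthesis, gn_commuting_idempotent} alluded to in the remark following the definition of $\ASYT$.
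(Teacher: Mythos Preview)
Your overall strategy matches the paper's: cite the first identity from \cite{negutflag,gn}, then substitute the cuspidal presentation (\ref{cuspidalshuffle}) into the same integral/localization formula and evaluate. The paper carries this out explicitly as an iterated contour integral (your ``underlying localization formula'' is the paper's (\ref{formula_integral_P.1})), computing residues in $z_n,z_{n-1},\ldots$ successively.

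However, your account of why $\ASYT$ appear is backwards. You write that the surviving orderings are those for which the denominator $\prod(1-\chi_i/(t\chi_{i+1}))$ \emph{fails to vanish}. In the residue calculation it is the opposite: at each step one \emph{picks up} a pole, and the point of the cuspidal denominator is that it contributes an \emph{additional} simple pole at $z_{i}=t\,z_{i+1}$ beyond the poles already present from the $\omega$-factors (at $z_i=q^{-1}\chi_j$ or $t^{-1}\chi_j$) and from the vacuum factor (at outer corners). Concretely, after fixing the box labeled $i+1$, the pole $z_i=t\chi_{i+1}$ places box $i$ directly \emph{above} box $i+1$; combined with the standard poles this is exactly the relaxation defining an $\ASYT$. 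So the $\ASYT$ condition does not come from avoiding zeros of the denominator but from the enlarged set of residues; indeed for some $\sigma\in\ASYT_\lambda$ the naive denominator in (\ref{Pmn_cuspidal}) vanishes and is cancelled by a matching zero in $\Theta_\sigma$ coming from $\omega^{-1}(\chi_j/\chi_i)$. Your final paragraph hints at the right mechanism via the eccentric correspondence, but the explanation in your third paragraph would need to be rewritten along these lines.
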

	\begin{proof}
		The first identity is exactly \cite[Proposition 5.5]{negutflag} (see also \cite[(49)]{gn}). We show the second identity closely following the proof of \textit{loc. cit.}
		
		As in the proof of \cite[Proposition 5.5]{negutflag} (see also \cite[(42)]{gn}), the localization formula (\ref{formula_localization}) and \cite[Theorem 4.7]{negutflag} imply that $P_{n,m}\cdot 1$ equals 
		\begin{align}\label{formula_integral_P.1}
			\gamma^{n-1}\sum_{\mu \vdash n} \frac{[I_\mu]}{g_\mu} \int  \frac{\prod_{i=1}^n z_{n-i+1}^{\lfloor ic\rfloor -\lfloor (i-1)c\rfloor}}{\prod_{i=1}^{n-1}(1-\frac{z_{i}}{tz_{i+1}})}\prod_{1\leq i<j\leq n} \omega(\frac{z_i}{z_j})\prod_{\square\in \lambda}\prod_{i=1}^n \bigg(\omega^{-1}(\frac{z_i}{\chi(\square)})(1-qtz_i)\frac{d z_i}{2\pi i z_i}\bigg)\bigg]
		\end{align}
	where $\gamma=\frac{(1-qt)}{(1-t)(-qt)}$ and  the integral is taken along contours separating the poles of the function to be integraded.
		
		For a partition $\lambda\vdash n$, \cite[(5.4)]{negutflag} (proved in detail in \cite[Lemma 4.8.5]{bgcombo}) 
		\[\prod_{\square\in\lambda}\bigg(\omega^{-1}(\frac{z}{\chi(\square)})(1-qtz)\bigg)=\frac{\prod_{\square\text{ inner corner of }\mu}(1-\frac{qtz}{\chi(\square)})}{\prod_{\square\text{ outer corner of }\mu}(1-\frac{qtz}{\chi(\square)})}.\]
		where the notion of inner/outer corners is illustrated below, with hollow circles indicating the {inner} corners of the partition and the solid circles indicating the {outer} corners.
  \begin{center}
  \begin{picture}(160,160)(0,-20)
			%\put(57,16){\mbox{X}}
            \put(0,0){\line(1,0){160}}
			\put(0,40){\line(1,0){160}}
			\put(0,80){\line(1,0){120}}
			\put(0,120){\line(1,0){40}}
			
			\put(0,0){\line(0,1){120}}
			\put(40,0){\line(0,1){120}}
			\put(80,0){\line(0,1){80}}
			\put(120,0){\line(0,1){80}}
			\put(160,0){\line(0,1){40}}
			
			\put(160,40){\circle*{5}}
			\put(120,80){\circle*{5}}
			\put(40,120){\circle*{5}}
			
			\put(160,0){\circle{5}}
			\put(120,40){\circle{5}}
			\put(40,80){\circle{5}}
			\put(0,120){\circle{5}}
			
			\put(162,3){\scriptsize{$(4,0)$}}
			\put(162,43){\scriptsize{$(4,1)$}}
			\put(122,43){\scriptsize{$(3,1)$}}
			\put(122,83){\scriptsize{$(3,2)$}}
			\put(42,83){\scriptsize{$(1,2)$}}
			\put(42,123){\scriptsize{$(1,3)$}}
			\put(2,123){\scriptsize{$(0,3)$}}
	\put(25,-20){\cite[Figure 5.1]{negutflag}}
		\end{picture}
  \end{center}
		\vspace{10pt}
	
		When we integrate over $z_n$, we pick up a residue whenever $qtz_n$ equals to the weight of some outer corner of the partition $\lambda$. Label the box adjacent to this corner by $n$. Then by change of variables the residue we pick up is the weight of the box labeled by $n-1$ and the integral in (\ref{formula_integral_P.1}) becomes
		\[\chi_n\int  \bigg[\frac{\prod_{i=1}^n z_{n-i+1}^{\lfloor ic\rfloor -\lfloor (i-1)c\rfloor}}{\prod_{i=1}^{n-1}(1-\frac{z_{i}}{tz_{i+1}})}\hat{\prod}_{1\leq i<j\leq n} \omega(\frac{z_i}{z_j})\hat{\prod}_{\square\in \lambda}\hat{\prod}_{i=1}^n \bigg(\omega^{-1}(\frac{z_i}{\chi(\square)})(1-qtz_i)\frac{d z_i}{2\pi i z_i}\bigg)\bigg]|_{z_n=\chi_n}\]
		Next, when we integrate over $z_{n-1}$, we pick up a residue whenever \begin{itemize}
			\item $qtz_{n-1}$ equals the weight of some outer corner of $\lambda$. ($z_{n-1}$ equals to the weight of some inner corner of $\lambda$)
			\item $q\frac{z_{n-1}}{\chi_n}=1$ or $t\frac{z_{n-1}}{\chi_n}=1$. ($z_{n-1}$ equals to the weight of some box to the left or below the box labeled by $n$ in the last step.)
			\item $\frac{z_{n-1}}{t\chi_n}=1$. ($z_{n-1}$ equals to the weight of some box above the box labeled by $n$ in the last step.)
		\end{itemize}
		These three conditions together with the condition for $z_{n}$ exactly define all the possible relative positions between the box labeled by $n$ and the box labeled by $n-1$ in an $\ASYT$ such that the box labeled by $n$ is in a corner. One can argue similarly starting from any label on a corner. 
		
		Repeating this procedure, we conclude that the integral in (\ref{formula_integral_P.1}) equals 
		\begin{align*}
			&\displaystyle{\sum_{\ASYT}\prod_{i=1}^n\chi_i\bigg[\frac{\prod_{i=1}^n z_{n-i+1}^{\lfloor ic\rfloor -\lfloor (i-1)c\rfloor}}{\hat{\prod}_{i=1}^{n-1}(1-\frac{z_{i}}{tz_{i+1}})} \hat{\prod}_{1\leq i<j\leq n} \omega(\frac{z_i}{z_j}) \hat{\prod}_{\square\in \lambda}\hat{\prod}_{i=1}^n \bigg(\omega^{-1}(\frac{z_i}{\chi(\square)})(1-qtz_i)\frac{1}{z_i}\bigg)\bigg]|_{z_i=\chi_i}}\\
			=&\bigg(\frac{(1-q)(1-t)}{ (1-qt)}\bigg)^n\sum_{\ASYT_\lambda}\bigg[\frac{\prod_{i=1}^n \chi_{n-i+1}^{\lfloor ic\rfloor -\lfloor (i-1)c\rfloor}}{\hat{\prod}_{i=1}^{n-1}(1-\frac{\chi_{i}}{t\chi_{i+1}})}\hat{\prod}_{1\leq i<j\leq n} \omega^{-1}(\frac{\chi_j}{\chi_i})\prod_{i=1}^n(1-qt\chi_i)\bigg].
		\end{align*}
		The factor $\big(\frac{(1-q)(1-t)}{ (1-qt)}\big)^n$ comes from $\hat{\prod}_{i=1}^n \omega^{-1}(\frac{\chi_i}{\chi_i})$.
	\end{proof}
	
	\subsection{Cuspidal and Catalan dg modules vs shuffle generators}	
	\begin{prop}\label{prop_P_A}
		\begin{align*}
			(P_{n,m}\cdot 1)=&\sum_{\lambda\vdash n}{\ch_{q,t}((\Rrm p_*\Acal'_c)|_{\ebf_\lambda})}\frac{\wt{H}_\lambda}{g_{\lambda}}\\
			=&q^{1-n}\sum_{\lambda\vdash n} {\ch_{q,t}((\Rrm p_*\Acal_c)|_{\ebf_\lambda})}\frac{\wt{H}_\lambda}{g_{\lambda}}
		\end{align*}
	\end{prop}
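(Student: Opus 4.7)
The plan is to match the two explicit character formulas (\ref{formula_chA'}) and (\ref{formula_chA}) of Propositions \ref{character_of_A} and \ref{characterofA'} against the two expressions (\ref{Pmn_catalan}) and (\ref{Pmn_cuspidal}) for $P_{n,m}\cdot 1$ from Proposition \ref{Pmn}, recognizing that each pair is set up to fit together.

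For the first identity, substitute (\ref{formula_chA'}) into $\sum_{\lambda\vdash n} \frac{\wt{H}_\lambda}{g_\lambda}\ch_A((Rp'_*\Acal'_c)|_{\ebf_\lambda})$. The factor $g_\lambda$ cancels, and reindexing by $j=n-i+1$ rewrites $\prod_{i=1}^n \chi_i^{\mu_c(n-i+1)}$ as $\prod_{j=1}^n \chi_{n-j+1}^{\lfloor jc\rfloor - \lfloor (j-1)c\rfloor}$, which matches the monomial appearing in (\ref{Pmn_catalan}). It remains to identify, termwise in $\sigma\in \SYT_\lambda$, the factor $\Xi_\sigma/\hat{\prod}_i (1-qt\chi_i/\chi_{i+1})$ with $\tfrac{(1-q)^n(1-t)^n}{g_\lambda(1-qt)^n}\,\Theta_\sigma/\prod_i(1-qt\chi_i/\chi_{i+1})$. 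This is a purely combinatorial identity relating the product $g_\lambda$ to hook weights $\chi_i = q^{a'(x_i)}t^{\ell'(x_i)}$, of exactly the same type used in the proof of the Procesi bundle character formula \cite[Theorem 4.5.1]{bgcombo}; one rewrites all the symbols in terms of box weights and matches factor by factor.

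For the second identity, I proceed identically with (\ref{formula_chA}) and (\ref{Pmn_cuspidal}), both of which are now indexed by $\ASYT_\lambda$ instead of $\SYT_\lambda$. The factor $q^{1-n}$ in the statement is precisely what is needed to reconcile the rational prefactors $\tfrac{(1-qt)^{n-1}}{(1-t)^{n-1}(-t)^{n-1}}$ from (\ref{formula_chA}) and $\tfrac{(1-t)(1-q)^{n-1}}{(1-qt)(-qt)^{n-1}}$ from (\ref{Pmn_cuspidal}), together with the corresponding $\Xi_\sigma$-$\Theta_\sigma$ identity for ASYT.

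The principal technical obstacle is the careful bookkeeping between the restricted product $\hat{\prod}$ (which omits zero linear denominators) and the unrestricted product $\prod$: the vanishing factors discarded from $\Xi_\sigma$ and from $\hat{\prod}(1-qt\chi_i/\chi_{i+1})$ on the character side must be paired off against the corresponding vanishing linear factors present in $\Theta_\sigma$ and $\prod(1-qt\chi_i/\chi_{i+1})$ on the shuffle side (and likewise in the cuspidal-ASYT case against $\prod(1-\chi_i/(t\chi_{i+1}))$). Once this matching of zero/pole contributions is handled, each of the two identities reduces to the hook-product rewriting already encoded in the existing character formulas, and the proposition follows.
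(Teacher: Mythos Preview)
Your overall strategy is the same as the paper's: reduce both identities to a termwise comparison of (\ref{formula_chA'}) with (\ref{Pmn_catalan}) and of (\ref{formula_chA}) with (\ref{Pmn_cuspidal}), cancel the $g_\lambda$'s, match the monomials $\prod_i\chi_{n-i+1}^{\mu_{\lfloor c\rfloor}(i)}$, and isolate the residual identity
\[
g_\lambda\,\Xi_\sigma\;=\;\Big(\tfrac{(1-q)(1-t)}{1-qt}\Big)^{n}\,\Theta_\sigma
\]
(up to the $\hat\prod$/$\prod$ bookkeeping you flag). Your accounting for the prefactor $q^{1-n}$ in the second identity is also correct.

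The gap is the step you summarize as ``one rewrites all the symbols in terms of box weights and matches factor by factor.'' This is the entire content of the proof, and it is \emph{not} a direct factor matching: $g_\lambda$ is written in arm/leg statistics, while $\Xi_\sigma$ and $\Theta_\sigma$ are written in contents $\chi_i=q^{a'}t^{\ell'}$, and the two $\omega$-products over pairs $i<j$ do not cancel against one another. The paper's key device is to factor both sides through the chain of sub-tableaux $\sigma(1)\subset\cdots\subset\sigma(n)=\sigma$, writing $\Theta_\sigma=\prod_j\Theta(\sigma(j))$ and $\Xi_\sigma=\prod_j\Xi(\sigma(j))$, so that $g_\lambda$ telescopes as $\prod_j g_{\sigma(j)}/g_{\sigma(j-1)}$. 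The problem then reduces to the single-box identity
\[
\Theta(\sigma(j))\;=\;\Xi(\sigma(j))\,\frac{g_{\sigma(j)}}{g_{\sigma(j-1)}}\,\frac{1-qt}{(1-q)(1-t)},
\]
which is checked using the explicit expressions for $\Xi(\sigma(j))$ and for the ratio $g_{\sigma(j)}/g_{\sigma(j-1)}$ established in \cite[4.10 and Lemma 4.8.5]{bgcombo}. This telescoping reduction is the missing idea in your proposal; once you have it, the same single-box identity handles both the $\SYT$ and $\ASYT$ sums at once, so there is no need to treat the cuspidal case separately.
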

	\begin{proof}
		Recall the following expression from formulae (\ref{Pmn_catalan}) and (\ref{Pmn_cuspidal}) of $P_{m,n}\cdot 1$:
		\begin{align*}
			\Theta_\sigma=\Omega(qt\sum_{i=1}^n\chi_i-(1-q)(1-t)\sum_{1\leq i<j\leq n}\frac{\chi_{j}}{\chi_i})
		\end{align*}
		For a Young tableau $\sigma$ with $n$ boxes and positive integer $k\leq n$, we let $\sigma(k)$ denote the Young sub-tableau consisting of the first $k$ labels in $\sigma$.  Then $\Theta_\sigma=\prod_{i=1}^n \Theta(\sigma(j))$
		with 
		\begin{align}   \label{theta}
			\Theta(\sigma(j)):=\Omega(qt\chi_j-(1-q)(1-t)\sum_{  1\leq i<j}\frac{\chi_{j}}{\chi_i}).
		\end{align}
		Also recall the following expression from formulae (\ref{formula_chA}) and (\ref{formula_chA'}) for $\Acal_c$ and $\Acal'_c$:
		\[\Xi_{\sigma}=\Omega(-\sum_{i=1}^n\chi_j^{-1}+(1-q)(1-t)\sum_{1\leq i<j\leq n}\frac{\chi_{i}}{\chi_j})=\prod_{i=1}^n \Xi(\sigma(j))\] with
		\[\Xi(\sigma(j)):=\Omega^0(-\chi_j^{-1}+(1-q)(1-t)\sum_{1\leq i<j}\frac{\chi_{i}}{\chi_j}).\]
		
		In view of Propositions \ref{characterofA'}, \ref{character_of_A} and \ref{Pmn}, it suffices to show that for all $i=1,\dots, n$, \begin{align}
			\label{equality}
			\Theta(\sigma(i))=\Xi(\sigma(i))\frac{g_{\sigma(i)}}{g_{\sigma(i-1)}}\frac{1-qt}{(1-q)(1-t)}.
		\end{align}
		We consider the case $i=n$; the proof in the general case is similar.  
		
		 We let $R_n$ (resp. $C_n$) denote the set of boxes of $\sigma$  in the same row (resp. same column) as $\sigma\setminus\sigma(n-1)$ (excluding $\sigma\setminus\sigma(n-1)$). For $x\in\sigma$ we write $a_k(x), \ell_k(x)$ to indicate the arm and leg of $x$ in $\sigma(k)$.
		Then we have that
		\[\frac{g_{\sigma(n)}}{g_{\sigma(n-1)}}=(1-q)(1-t)\prod_{x\in C_n\cup R_n} \frac{1-q^{1+a_n(x)}t^{-\ell_{n}(x)}}{1-q^{1+a_{n-1}(x)}t^{-\ell_{n-1}(x)}}
		\cdot \frac{1-q^{-a_n(x)}t^{\ell_{n}(x)+1}}{1-q^{-a_{n-1}(x)}t^{\ell_{n-1}(x)+1}}.\]
		
		Moreover, it is shown in \cite[4.10]{bgcombo} that 
		\begin{align*}
			\Xi(\sigma(n))=&\prod_{x\in C_n} \frac{1-q^{1+a_{n-1}(x)}t^{-\ell_{n-1}(x)}}{1-q^{1+a_n(x)}t^{-\ell_{n}(x)}}\prod_{x\in R_n} \frac{1-q^{-a_{n-1}(x)}t^{\ell_{n-1}(x)+1}}{1-q^{-a_n(x)}t^{\ell_{n}(x)+1}}.
		\end{align*}
		Therefore the right hand side of (\ref{equality}) equals 
		\[(1-qt)\prod_{x\in R_n} \frac{1-q^{1+a_n(x)}t^{-\ell_{n}(x)}}{1-q^{1+a_{n-1}(x)}t^{-\ell_{n-1}(x)}}
		\prod_{x\in C_n} \frac{1-q^{-a_n(x)}t^{\ell_{n}(x)+1}}{1-q^{-a_{n-1}(x)}t^{\ell_{n-1}(x)+1}}\]
		which can be also written as $\Omega$ applied to
		\begin{align}
			\label{al}
			qt+\sum_{x\in R_n} \big(q^{1+a_n(x)}t^{-\ell_{n}(x)}-q^{1+a_{n-1}(x)}t^{-\ell_{n-1}(x)}\big)+\sum_{x\in C_n}\big(q^{-a_n(x)}t^{\ell_{n}(x)+1}-q^{-a_{n-1}(x)}t^{\ell_{n-1}(x)+1}\big)
		\end{align}
		Assume the box labeled by $n$ is at $(c,r)$ and the partition is $(\lambda_1,\cdots,\lambda_{\ell(\lambda)})$ with $\lambda_1\geq \dots \geq \lambda_{\ell(\lambda)}$. By the proof of \cite[Lemma 4.10.2]{bgcombo}, we have that (\ref{al}) equals 
		\begin{align}
			&qt+\sum_{i=1}^{c-1}(q^{c-i+1}-q^{c-i})t^{r+1-\lambda_{i+1}}+\sum_{j=0}^{r-1} q^{-\lambda_{j+1}+c+1}(t^{r-j+1}-t^{r-j})\nonumber\\
			\label{cr}=&qt+q^{c+1}t^{r+1}\big(\sum_{i=1}^c(q^{-i+1}-q^{-i})t^{-\lambda_i}+\sum_{j=1}^rq^{-\lambda_j}(t^{-j+1}-t^{-j})\big).
		\end{align}
		By \cite[Lemma 4.8.5]{bgcombo}
		\[\sum_{i=1}^c(q^{-i+1}-q^{-i})t^{-\lambda_i}+\sum_{j=1}^rq^{-\lambda_j}(t^{-j+1}-t^{-j})=-(1-q^{-1})(1-t^{-1})B_{\sigma\setminus\{(c,r)\}}(q^{-1},t^{-1})+1-q^{c}t^{-r}.\]
		Here $B_\mu=\sum_{(\alpha,\beta)\in \mu}q^\alpha t^\beta$ for any Young diagram $\mu$. 
		
		Therefore, (\ref{cr}) equals
		\[-q^ct^r(1-q)(1-t)B_{\sigma(n-1)}(q^{-1},t^{-1})+q^{c+1}t^{r+1},\]
		which is exactly $qt\chi_n-(1-q)(1-t)\sum_{1\leq i< n}\frac{\chi_{n}}{\chi_i}$. From the expression (\ref{theta}), we obtain the identity (\ref{equality}) and the proposition follows.
	\end{proof}
	\begin{rem}
		A similar formula appears in \cite[Lemma 5.13]{kivinenshalika}
	\end{rem}
	We will express the formula (\ref{Pmn_catalan}) as $P_{m,n}\cdot 1=\sum_{\lambda\vdash n} c_{m,n}^\lambda \wt{H}_\lambda $, with \[c_{m,n}^\lambda=\bigg(\frac{(1-q)(1-t)}{ (1-qt)}\bigg)^n\sum_{\sigma\in \SYT_\lambda}\frac{\prod_{i=1}^n \chi_{n-i+1}^{\lfloor ic\rfloor -\lfloor (i-1)c\rfloor}}{g_\lambda\prod_{i=1}^{n-1}(1-qt\frac{\chi_{i}}{\chi_{i+1}})}\Theta_\sigma.\] 	According to \cite[Conjecture 6.1]{gn} which is implied by \cite[Theorem 5.8]{mellitshuffle}, 
	\begin{align}
 \label{formula_q,t-catalan}
		\sum_{\lambda\vdash n} c_{m,n}^\lambda=\sum_{D}q^{\mu-\mathrm{area}(D)}t^{\mathrm{dinv}(D)}=\sum_{D}q^{\mathrm{dinv}(D)}t^{\mu-\mathrm{area}(D)}\end{align}
	is the $q,t$-Catalan number. Here $\mu=\frac{(m-1)(n-1)}{2}$. The sums on the middle and right are taken over all $\frac{m}{n}$-Dyck paths $D$. The area and dinv are two combinatorial statistics associated to each dyck path with nonnegative integer values. In particular, when area$(D)=0$, dinv$(D)=\mu$ and when dinv$(D)=0$, area$(D)=\mu$. Interested readers can refer to \cite[6.2]{gn} for definitions. 
	\begin{coro}\label{coro_ch_A}
		With respect to the natural $\C^*\times \C^*$-action on $\Acal_c$,  \begin{itemize}
			\item $\ch_{A\times S_n} \Gamma(\Hilb^n, {\Pcal}\otimes \desc(Rp_*\Acal_c\boxtimes \Ocal_V)|_{\wt{\Hilb^n}})=q^{n-1}P_{m,n}\cdot 1.$
			\item $\ch_A \Gamma(\Hilb^n,  \desc(Rp_*\Acal_c\boxtimes \Ocal_V)|_{\wt{\Hilb^n}})=q^{n-1}\sum_{\lambda\vdash n} c_{m,n}^\lambda$.
		\end{itemize}
	\end{coro}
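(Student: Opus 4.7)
The plan is to compute the equivariant $K$-theory class $[\Fcal]\in K^A(\Hilb^n)$ of the sheaf $\Fcal:=\desc(Rp_*\Acal_c\boxtimes \Ocal_V)|_{\wt{\Hilb^n}}$ by localization at $A$-fixed points, and then translate it into the two character statements via the Bridgeland-King-Reid isomorphism.

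First, I would apply the localization formula (\ref{formula_localization}):
\[
\kappa([\Fcal])=\sum_{\lambda\vdash n}\frac{\wt{H}_\lambda}{g_\lambda}\,\ch_A(L\iota_\lambda^*\Fcal).
\]
The key geometric input is the bigraded identification $L\iota_\lambda^*\Fcal\cong (Rp_*\Acal_c)|_{\ebf_\lambda}$. The preimage of the torus-fixed point $I_\lambda$ under the GIT projection $\wt{\Hilb^n}\to \Hilb^n$ is a free $G$-orbit whose representative realizes the principal nilpotent pair $\ebf_\lambda$, and the $\Ad(\rho)$-component of the $A$-action from Section \ref{C*actions} is absorbed into this $G$-orbit on passing to the quotient, exactly as in the reasoning behind Lemma \ref{lemma_bigradedisom}. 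Substituting this into the localization formula and invoking Proposition \ref{prop_P_A} then gives
\[
\kappa([\Fcal]) = q^{n-1}(P_{m,n}\cdot 1).
\]

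For statement (1), I would unpack $\kappa=\ch_{S_n\times A}\circ \beta_*\alpha^*$. The map $\alpha$ is finite with $\alpha_*\Ocal_{\IHilb^n}=\Pcal$, so the projection formula gives $\alpha_*\alpha^*\Fcal\cong \Pcal\otimes \Fcal$; since $\C^{2n}$ is affine, $\beta_*$ is identified on $K$-theory with taking global sections on $\IHilb^n$, yielding $\kappa([\Fcal])=\ch_{S_n\times A}\Gamma(\Hilb^n,\Pcal\otimes \Fcal)$, which is the first claim. For statement (2), I would use $\Vcal_{(n)}\cong \Ocal_{\Hilb^n}$, so taking $S_n$-invariants recovers $\Gamma(\Hilb^n,\Pcal\otimes\Fcal)^{S_n}\cong \Gamma(\Hilb^n,\Fcal)$, whence $\ch_A\Gamma(\Hilb^n,\Fcal)$ equals the coefficient of $s_{(n)}$ in $q^{n-1}(P_{m,n}\cdot 1)$. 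Expanding $P_{m,n}\cdot 1=\sum_\lambda c_{m,n}^\lambda\wt{H}_\lambda$ and invoking the defining normalization $(\wt{H}_\lambda, s_{(n)})=1$ of modified Macdonald polynomials yields (2).

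The most delicate step is the fixed-point identification $L\iota_\lambda^*\Fcal\cong (Rp_*\Acal_c)|_{\ebf_\lambda}$: one must verify both that the derived pullback is concentrated in cohomological degree zero (a consequence of the free $G$-action and the $A$-regularity of the fixed locus) and that the geometric $A$-action on $\Hilb^n$ matches the $\Ad(\rho)$-twisted action on $T^*\lieg$ after descent. The remainder of the argument is localization bookkeeping together with standard manipulations of modified Macdonald polynomials.
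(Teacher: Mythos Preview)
Your proposal is correct and matches the paper's intended argument: the paper presents this as a corollary without an explicit proof, and the route you describe---localization formula (\ref{formula_localization}), the fixed-point identification $\ch_A(L\iota_\lambda^*\Fcal)=\ch_A((Rp_*\Acal_c)|_{\ebf_\lambda})$ underlying Lemma \ref{lemma_bigradedisom}, Proposition \ref{prop_P_A}, and then unpacking $\kappa$ via BKR/projection formula for (1) and the Macdonald normalization $(\wt{H}_\lambda,s_{(n)})=1$ for (2)---is exactly what is meant. Your flagging of the fixed-point identification as the delicate step is accurate; note that for the $K$-theoretic statement you only need the equality of $A$-characters, so concentration in degree zero is not strictly required.
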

	\begin{rem}Let $\wt{\Pcal}$ be the rank $n!$ vector bundle on $\Cfrak$ as defined in \cite{isospec}. Then one similarly has that 
		\begin{itemize}
			\item $\ch_{A\times S_n} \Gamma(\Cfrak_r, \wt{\Pcal}\otimes Rp_*\Acal_c)^{\ol{G}}=q^{n-1}P_{m,n}\cdot 1.$
			\item $\ch_A \Gamma(\Cfrak_r,  Rp_*\Acal_c)^{\ol{G}}=q^{n-1}\sum_{\lambda\vdash n} c_{m,n}^\lambda$.
		\end{itemize}
	\end{rem}
	\begin{lem}\label{lemma_grN_Hilb_T*g}
		$\Gamma(T^*\Gfrak,\wt{\gr}^{\Hrm}\ol{\Nbb}_c)^{\tau_{-c}(\ol{\lieg})}=\Gamma(\wt{\Hilb^n},\wt{\gr}^{\Hrm}\ol{\Nbb}_c)^{\tau_{-c}(\ol{\lieg})}$.
	\end{lem}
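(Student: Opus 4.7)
The plan is to combine the description of $\wt{\gr}^{\Hrm}\ol{\Nbb}_c$ from Proposition \ref{prop_grN_as_pushforward} with a geometric invariant theory argument modeled on \cite[Proposition 7.4]{ggs}.

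First, I would substitute the identification $\wt{\gr}^{\Hrm}\ol{\Nbb}_c\cong Rp_*\Acal_c\boxtimes (i_V)_*\Ocal_V$ on $T^*\Gfrak$. The support of this sheaf lies in $\Cfrak\times V\subset T^*\lieg\times T^*V=T^*\Gfrak$, where $\Cfrak=\{(X,Y)\in\Nscr\times\Nscr\mid [X,Y]=0\}$ is the nilpotent commuting variety; its intersection with $\wt{\Hilb^n}$ (which cuts out the cyclic locus with $j=0$) is precisely $\wt{\Hilb^n_0}$. So in both invariants we may replace $\wt{\gr}^{\Hrm}\ol{\Nbb}_c$ with $Rp_*\Acal_c\boxtimes(i_V)_*\Ocal_V$ and compute accordingly.

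Next, I would split $\ol{\lieg}=\lieg\oplus\C\cdot\mathbf{1}$ and handle the center first. On the associated graded, $\tau_{-c}(\mathbf{1})$ acts as $-\sum_i v_i\partial_{v_i}+m$; since the $\partial_{v_i}$ directions are contracted on $(i_V)_*\Ocal_V$, this reduces to the scalar operator $-\deg_V+m$ on $\C[V]$, whose kernel is exactly $\Sym^m V\subset\C[V]$. After this reduction, the claim becomes the equality
\begin{align*}
\big(\Gamma(T^*\lieg, Rp_*\Acal_c)\otimes \Sym^m V\big)^{G}=\big(\Gamma(\wt{\Hilb^n_0}, (Rp_*\Acal_c\boxtimes \Ocal_V)|_{\wt{\Hilb^n_0}})\otimes_{\C[V]}\Sym^m V\big)^{G}
\end{align*}
for the residual $G=\SL_n$-action; equivalently, we want to match $\GL_n$-semi-invariants of $\det$-weight $m$ on $T^*\Gfrak$ versus on the cyclic locus $\wt{\Hilb^n}$.

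Finally, I would apply the GIT semistability argument in the spirit of \cite[Proposition 7.4]{ggs}. That cited result gives the desired equality when the center is twisted to $\tau_{-c-\ell}$ with $\ell\gg 0$; in our setting the natural $\det^m$-twist carried by $\Sym^m V$ already plays the role of this large twist at the unshifted level $\ell=0$, because $\wt{\Hilb^n}$ is by construction the $\det$-semistable locus. Concretely, any $\tau_{-c}(\ol{\lieg})$-invariant section supported on the non-cyclic stratum $\Supp\setminus\wt{\Hilb^n_0}$ must vanish, since the Hilbert--Mumford destabilizing one-parameter subgroups at every non-cyclic point contribute non-positive $\det$-weights incompatible with a $\det^m$-semi-invariance for $m>0$. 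I expect the main obstacle to be verifying that the $\det^m$-twist supplied by $\Sym^m V$ is genuinely sufficient to invoke the semistability argument at the specific value $\ell=0$, rather than relying on $\ell\gg 0$ as in the cited reference; this will require a stratum-by-stratum analysis of the non-cyclic locus inside $\Cfrak\times V$ and careful tracking of how the destabilizing one-parameter subgroups pair against the $\det^m$-weight.
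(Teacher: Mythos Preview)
Your approach is genuinely different from the paper's, and the gap you yourself flag is real and not closed by the argument you sketch.

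The paper does not attempt a direct GIT/extension argument at all. Instead it proves the lemma by a dimension count: the inclusion ``$\subset$'' (restriction to the open set $\wt{\Hilb^n}$) is immediate, the left-hand side is $\gr^{\Hrm}\e\Lrm_c$ by Corollary~\ref{coro_lift} and hence has dimension the Catalan number $\frac{(m+n-1)!}{m!n!}$, while the right-hand side has dimension $\sum_{\lambda\vdash n}c_{m,n}^\lambda(1,1)$ by Proposition~\ref{prop_grN_as_pushforward} and Corollary~\ref{coro_ch_A}. The equality of these two numbers is then imported from the rational shuffle theorem via (\ref{formula_q,t-catalan}). So the paper trades a hard extension problem for a deep combinatorial identity already available in the literature.

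Your proposed route has a concrete problem beyond the one you name. The Hilbert--Mumford argument you outline---that an invariant section \emph{supported} on the unstable locus must vanish---only establishes injectivity of the restriction map $\Gamma(T^*\Gfrak,-)^{\tau_{-c}(\ol{\lieg})}\hookrightarrow\Gamma(\wt{\Hilb^n},-)^{\tau_{-c}(\ol{\lieg})}$, which is the trivial direction. What you actually need is \emph{surjectivity}: that every invariant section on the open semistable locus extends across the unstable strata. This is a depth/local-cohomology statement about $Rp_*\Acal_c$ along the unstable locus, and it is precisely what fails for small twists in general; \cite[Proposition 7.4]{ggs} only guarantees it asymptotically in $\ell$. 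A stratum-by-stratum Hilbert--Mumford analysis of destabilizing one-parameter subgroups tells you nothing about extendability of sections from the open stratum. Absent an independent argument controlling $H^1_{\text{unstable}}$ of the sheaf (which would likely be as hard as the lemma itself), your plan does not close.
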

	\begin{proof}
		Clearly we have the inclusion ``$\subset$". Thus we only need to count the dimensions. By Corollary \ref{coro_lift}, the left hand side equals $\wt{\gr}^\Hrm \e\Lrm_c$, which is known to have dimension $\frac{(m+n-1)!}{m!n!}$, the Catalan number. Moreover, by Corollary \ref{coro_ch_A} and Proposition \ref{prop_grN_as_pushforward}, the dimension of the right hand side equals to $\sum_{\lambda\vdash n} c_{m,n}^\lambda(q=1,t=1)$.
		
		By (\ref{formula_q,t-catalan}), $\sum_{\lambda\vdash n} c_{m,n}^\lambda(q=1,t=1)$ equals the number of $\frac{m}{n}$-dyck paths, which is known to be $\frac{(m+n-1)!}{m!n!}$. 
	\end{proof}
	\begin{prop}\label{hodge_compatible_shift}
		Hodge filtrations are compatible with shift functors, i.e., when $c>1$ the following isomorphism is filtered with respect to the Hodge filtration \begin{align*}
		\e\Lrm_c\cong\e\Hrm_c \delta \e_-\otimes_{\e\Hrm_{c-1}\e}\e\Lrm_{c-1}\end{align*}
		where the filtration on the right hand side is the tensor product filtration with $\Hrm_c$ filtered by $\deg y=1$ and $\deg x=\deg w=0$.
	\end{prop}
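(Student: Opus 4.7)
The plan is to translate the statement into a compatibility for the cuspidal mirabolic $\Dcal$-module via the Hamiltonian reduction functor $\Omega_{-c-1}\circ\Hbb_{-c}$ and then verify this compatibility using the criterion of Proposition \ref{diagram}. Since $c-(c-1)=1\in\Z$, the local systems $\Fcal_c$ and $\Fcal_{c-1}$ on $\Nscr_r$ define the same monodromy character $e^{2\pi ic}$ of $\pi_1(\Nscr_r)=\Z/n\Z$ and therefore coincide. Consequently $\ol{\Nbb}_c\cong\ol{\Nbb}_{c-1}$ as $\Dcal_\Gfrak$-modules, and by the uniqueness of the pure Hodge module structure attached to a given local system, this isomorphism is filtered. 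I would fix a common underlying filtered $\Dcal$-module $\ol{\Nbb}$ and use Corollary \ref{coro_lift} to obtain filtered identifications
\begin{equation*}
\Omega_{c-1}(\e\Lrm_{c-1})\cong\Gamma(\Gfrak,\ol{\Nbb})^{\tau_{-c+1}(\ol{\lieg})},\qquad
\Omega_{c}(\e\Lrm_{c})\cong\Gamma(\Gfrak,\ol{\Nbb})^{\tau_{-c}(\ol{\lieg})}
\end{equation*}
with Hodge filtrations on both sides. Combining (\ref{PQ}) with (\ref{fil}), the right-hand side of the proposition, after $\Omega$-twist, becomes $\Dcal_{-c+1}(\Gfrak)^{\det^{-1}}\otimes_{\Arm_{-c}}\Gamma(\Gfrak,\ol{\Nbb})^{\tau_{-c+1}(\ol{\lieg})}$ with the tensor product filtration. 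The proposition thus reduces to showing that the map $\phi^1_{\ol{\Nbb}}$ from (\ref{phi}) is a filtered isomorphism.

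By \cite[Theorem 1.3.5]{bgnearbycycle}, $\phi^1_{\ol{\Nbb}}$ is an isomorphism of $\Arm_{-c-1}$-modules, and it tautologically satisfies $\phi^1_{\ol{\Nbb}}(F^T_i)\subseteq F^\Hrm_i$ because the Hodge filtration is, by definition, compatible with the order filtration on $\Dcal_\Gfrak$. The essential content is the strictness in the opposite direction. My plan for strictness is to pass to the associated graded: by Proposition \ref{prop_grN_as_pushforward} and Lemma \ref{lemma_olL}, $\wt{\gr}^\Hrm\ol{\Nbb}$ is described explicitly as the pushforward of the cuspidal dg module, and the two descriptions for parameters $c-1$ and $c$ differ precisely by the line bundle twist from $\Lcal_{\mu_{\lceil c-1\rceil}}$ to $\Lcal_{\mu_{\lceil c\rceil}}$. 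On the $\Dcal$-module side this twist is realized by multiplication by $\det^{-1}$-semi-invariants of $\Dcal(\Gfrak)$, so the associated graded of $\phi^1_{\ol{\Nbb}}$ should yield an isomorphism compatible with these descriptions.

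The main obstacle is carrying out this compatibility after taking $\tau_{-c}$-invariants. A convenient alternative path is to invoke Proposition \ref{diagram}: by the implication (b) $\Rightarrow$ (c), it suffices to establish that $\phi^\ell_{\ol{\Nbb}}$ is a filtered isomorphism for $\ell\gg 0$, where ampleness of the line bundles $\Lcal_{\mu_{\lceil c+\ell\rceil}}$ on $\Bcal$ and vanishing of higher direct images along the Springer-type resolution $p\colon G\times^B(\lien\times\Xfrak)\to T^*\lieg$ from Proposition \ref{prop_grN_as_pushforward} trivialize the analysis. I expect the argument to parallel the strategy of \cite{isospec} for the (non-cuspidal) Harish-Chandra $\Dcal$-module, where the crucial input is Saito's strict compatibility of Hodge modules under proper direct image. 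Once strictness is established, applying $\Omega_{-c-1}$ to the filtered isomorphism obtained from $\phi^1_{\ol{\Nbb}}$ yields the desired filtered identification $\e\Lrm_c\cong \e\Hrm_c\delta\e_-\otimes_{\e\Hrm_{c-1}\e}\e\Lrm_{c-1}$.
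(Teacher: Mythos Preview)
Your overall strategy matches the paper's: both reduce via Proposition \ref{diagram}(b)$\Leftrightarrow$(c) to showing that $\phi^\ell_{\ol{\Nbb}_c}$ is a filtered isomorphism for $\ell\gg 0$, and both implicitly use that $\ol{\Nbb}_c\cong\ol{\Nbb}_{c+k}$ as filtered Hodge modules (your observation that $\Fcal_c=\Fcal_{c-1}$ is exactly what underlies the identification $\Acal_{c+k}=\Acal_c\otimes\pi^*\Lcal_{(k,\dots,k)}$ used in the paper). The divergence is in how the large-$\ell$ case is actually settled.

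You write that ``ampleness of the line bundles $\Lcal_{\mu_{\lceil c+\ell\rceil}}$ on $\Bcal$ and vanishing of higher direct images along $p$ \dots\ trivialize the analysis.'' This is the gap. The paper's argument passes through the semistable locus $\wt{\Hilb^n}\subset T^*\Gfrak$: for $k\gg 0$ one identifies $\gr\,\Dcal_{-c}(\Gfrak)^{\det^{-k}}$ with $\Gamma(\Ocal_{\Hilb^n}(k))$ via \cite[Proposition 7.4]{ggs}, rewrites the source of $\gr\phi^k$ as $\Gamma(\Hilb^n,\Ocal(k)\otimes\desc(Rp_*\Acal_c\boxtimes\Ocal_V))\cong\Gamma(\wt{\Hilb^n},Rp_*\Acal_{c+k}\boxtimes\Ocal_V)$, and then matches this with the target $\gr^\Hrm\Gamma(\Gfrak,\ol{\Nbb}_c)^{\tau_{-c-k}(\ol{\lieg})}$. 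The last step requires Lemma \ref{lemma_grN_Hilb_T*g}, which says that restriction from $T^*\Gfrak$ to $\wt{\Hilb^n}$ loses no $\tau_{-c-k}$-invariant sections. That lemma is proved by a \emph{dimension count}: both sides have dimension equal to the rational Catalan number $\frac{(m+n-1)!}{m!n!}$, one side by Corollary \ref{coro_lift} and the known dimension of $\e\Lrm_c$, the other by Corollary \ref{coro_ch_A} (the shuffle-algebra character formula) combined with (\ref{formula_q,t-catalan}) at $q=t=1$.

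So the substance of the proof is not a formal ampleness or Saito-strictness argument, but this numerical coincidence, which draws on the full machinery of Sections 4--5 (Propositions \ref{characterofA'}, \ref{Pmn}, \ref{prop_P_A}). Your sketch does not address the comparison between global $T^*\Gfrak$-sections and semistable-locus sections; without Lemma \ref{lemma_grN_Hilb_T*g} there is no evident mechanism to identify the associated graded of the tensor-product filtration with the Hodge graded of the target, and the analogy with \cite{isospec} does not supply one (there the input is Cohen--Macaulayness of the isospectral commuting variety, which has no direct analogue here).
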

	\begin{proof}
		Recall the homomorphism $\phi^k_{\ol{\Nbb}_c}$ from (\ref{phi}). Given the equivalence (b)$\Leftrightarrow$(c) in Proposition \ref{diagram}, it suffices to show that $\gr(\phi^k_{\ol{\Nbb}_c})$ is an isomorphism for $k\gg 0$.
		
		We claim that when $k\gg 0$, $\Gamma(T^*\Gfrak,\wt{\gr}^\Hrm(\Dcal_{-c}(\Gfrak)^{\det^{-k}}))=\Gamma(\wt{\Hilb^n},\wt{\gr}^\Hrm(\Dcal_{-c}(\Gfrak)^{\det^{-k}}))=\Gamma(\Ocal_{{\Hilb}}(k))$.
		
		The first equality is \cite[Proposition 7.4]{ggs}. The second equality follows from the $\ol{G}$-equivariant isomorphism:
		$\big(\Ocal_{\lieg\times\lieg}\boxtimes \pi_V^*\Ocal_{\Pbb^{n-1}}(k)\big)|_{\wt{\Hilb^n}}\cong \Ocal_{\wt{\Hilb^n}}(k)$. Here $\pi_V: V\setminus \{0\}\to \Pbb^{n-1}$ is the quotient map.

		As a result of the claim, Lemma \ref{lemma_grN_Hilb_T*g} and Proposition \ref{prop_grN_as_pushforward}, when $k\gg 0$ we have that
		\[\gr(\Dcal_{-c}(\Gfrak)^{\det^{-k}}\otimes_{\Arm_c}\Gamma(\Gfrak,\Nbb_c)^{\tau_{-c}(\ol{\lieg})})\\
		\cong\Gamma\big(\Hilb_n,\Ocal_{\Hilb^n}(k)\otimes\desc((Rp_*\Acal_c \boxtimes(i_V)_*\Ocal_V)|_{\wt{\Hilb^n}})\big)\]
	   which is isomorphic to 
		\begin{align*}
		\Gamma\big(\wt{\Hilb^n},\big(Rp_*(\Acal_c\otimes (\pi_{\Yfrak\to \Bcal})^*\Lcal_{(k,\dots,k)})\big) \boxtimes (i_V)_*\Ocal_V\big)
			\cong\Gamma(\wt{\Hilb^n},Rp_*\Acal_{c+k}\boxtimes (i_V)_*\Ocal_V).\end{align*}
		Using Lemma \ref{lemma_grN_Hilb_T*g} and Proposition \ref{prop_grN_as_pushforward} again, we see that \[\Gamma(\wt{\Hilb^n},Rp_*\Acal_{c+k}\boxtimes (i_V)_*\Ocal_V)\cong \gr^\Hrm\Gamma(\Gfrak,\ol{\Nbb}_c)^{\ol{\lieg}_{-c-k}}\]
		and the proposition follows.
	\end{proof}

	Corollary \ref{hodge_compatible_shift} allows us to extend $F^\Hrm$ from $\e\Lrm_c$ to $\Lrm_c$ for all $c=\frac{m}{n}>1$ by defining a tensor product filtration on
	\begin{align}
		\label{Lc_via_eLc}
		\Lrm_c\cong \Hrm_c\delta\e_-\otimes_{\Arm_{c-1}}\e\Lrm_{c-1}. \end{align}
	\begin{thm}\label{thm_Lc_Pmn}	\begin{enumerate}[(i)]
			\item The bigraded Frobenius character of $\Lrm_c$ with respect to the Hodge filtration and the Euler field $\hrm_c$ is 
			\[\ch_{S_n\times\C^*\times\C^*}\gr^\Hrm\Lrm_c=(P_{m,n}\cdot 1)(q,q^{-1}t).\]
			\item In $\Coh^{\C^*\times\C^*}(\Hilb^n)$,  we have that
			\[GS(\e\Lrm_c)\cong q^{1-n} \desc\big(\big(Rp_*\Acal_c \boxtimes (i_V)_*\Ocal_V\big)|_{\wt{\Hilb^n}}\big).\] 
		\end{enumerate}
	\end{thm}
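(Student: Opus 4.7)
The plan is to handle Part (ii) first, essentially by repackaging the computation already carried out inside the proof of Proposition \ref{hodge_compatible_shift}, and then to deduce Part (i) by extracting characters, invoking Corollary \ref{coro_ch_A}, and passing from the natural $(q,t)$-variables on $\Hilb^n$ to the $(\hrm_c,\text{filtration})$-variables on $\Lrm_c$.

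For Part (ii), recall that $GS(\e\Lrm_c)$ is the coherent sheaf on $\Hilb^n = \mathrm{Proj}(J)$ associated to the graded module $\bigoplus_{k\geq 0}\gr S_{c,k}(\e\Lrm_c)$. The displayed computation inside the proof of Proposition \ref{hodge_compatible_shift} already establishes, for all $k \gg 0$, a filtered isomorphism
\[\gr S_{c,k}(\e\Lrm_c) \cong \Gamma\big(\Hilb^n, \Ocal_{\Hilb^n}(k) \otimes \desc((Rp_*\Acal_c \boxtimes (i_V)_*\Ocal_V)|_{\wt{\Hilb^n}})\big).\]
Since only the high-twist pieces matter for the $\mathrm{Proj}$-construction, this yields the sheaf-level isomorphism $GS(\e\Lrm_c) \cong \desc((Rp_*\Acal_c \boxtimes (i_V)_*\Ocal_V)|_{\wt{\Hilb^n}})$ up to a character $\C^*$-twist. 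To pin down the twist, I would trace the weight of the lowest Hodge-degree generator $\lceil s_0^c\rceil^{-1}$ through Lemma \ref{lemma_L}, Lemma \ref{lemma_olL} and Corollary \ref{lemma_grM}, and note that in the definition (\ref{defn_A}) of $\Acal_c$ we use $\mu_{\lceil c\rceil}$ while the natural normalization on the equivariant $K$-theory side involves $\mu_{\lfloor c\rfloor}$; the discrepancy $\mu_{\lceil c\rceil}-\mu_{\lfloor c\rfloor}=(1,0,\dots,0,-1)$ recorded at the end of the proof of Proposition \ref{characterofA'} contributes exactly the claimed $q^{1-n}$ factor.

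For Part (i), I would first invoke the standard property of the Procesi bundle that, under the Morita equivalence $\Hrm_c\Mod\cong\Arm_c\Mod$ (which holds since $c=m/n>0$ satisfies condition (\ref{good_c})), the bigraded Frobenius character of $\gr^\Hrm\Lrm_c$ is computed via
\[\ch_{S_n\times A}\gr^\Hrm\Lrm_c = \ch_{S_n\times A}\Gamma\big(\Hilb^n, \Pcal\otimes GS(\e\Lrm_c)\big).\]
Substituting the isomorphism from Part (ii) and combining with the first identity of Corollary \ref{coro_ch_A}, the $q^{1-n}$ from Part (ii) cancels the $q^{n-1}$ appearing in that corollary, giving $\ch_{S_n\times A}\Gamma(\Hilb^n,\Pcal\otimes GS(\e\Lrm_c))=P_{m,n}\cdot 1$ in the $\Hilb^n$-natural variables $(q_H,t_H)$. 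Finally, the identification at the end of Section \ref{descent} between the filtration grading and the embedding $t\mapsto(1,t)$, and between the $\hrm_c$-grading and the anti-diagonal embedding $t\mapsto(t,t^{-1})$, translates into the substitution $(q_H,t_H)\mapsto(q,q^{-1}t)$, producing $(P_{m,n}\cdot 1)(q,q^{-1}t)$ on the Cherednik side.

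\emph{Main obstacle.} The argument is not conceptually hard once Propositions \ref{diagram}, \ref{hodge_compatible_shift}, \ref{prop_grN_as_pushforward} and Corollary \ref{coro_ch_A} are in hand, but the bookkeeping is delicate: there are several competing $\C^*$-actions at play (the Euler field $\hrm_c$, the Hodge filtration, the Harish-Chandra shift $\tau_c$, and the standard bigrading on $\Hilb^n$), together with two closely related but distinct weight shifts $\mu_{\lceil c\rceil}$ versus $\mu_{\lfloor c\rfloor}$. The hardest part is verifying that all these corrections combine cleanly into the single factor $q^{1-n}$ in Part (ii) and the clean substitution $(q,q^{-1}t)$ in Part (i), rather than a messier combined shift.
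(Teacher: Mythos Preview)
Your overall structure---use Proposition \ref{hodge_compatible_shift} and Proposition \ref{diagram} to identify $GS(\e\Lrm_c)$ with the descent sheaf up to a twist, then tensor with the Procesi bundle and invoke Corollary \ref{coro_ch_A}, then apply the $(q,q^{-1}t)$ change of variables---matches the paper's. The place where your argument diverges, and where it has a genuine gap, is the determination of the twist.

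You propose to pin down the $q^{1-n}$ factor \emph{directly}, by tracing the lowest Hodge-degree generator through Lemmas \ref{lemma_L}--\ref{lemma_olL} and invoking the identity $\mu_{\lceil c\rceil}-\mu_{\lfloor c\rfloor}=(1,0,\dots,0,-1)$ from the end of Proposition \ref{characterofA'}. But that identity is a $T$-weight statement, not a $\C^*\times\C^*$-weight statement; it was used in Proposition \ref{characterofA'} only to rewrite the stalk character at a single $\ebf_\lambda$, and its relation to the global grading shift on $\Hilb^n$ is not what you claim. Carrying out a direct trace of the shift would require simultaneously tracking the Hodge degree of $\lceil s_0^c\rceil^{-1}$, the relative canonical twist $\alpha$, the normalization of $\desc_c$ versus $\desc$, and the $(-qt)^{1-n}$ factor inside the shuffle identity (\ref{cuspidalshuffle}); you have not done this, and you correctly flag it as the ``hardest part''.

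The paper sidesteps this bookkeeping entirely. It argues only that $\Psi_c(\ol{\Nbb}_c)$ and $\desc((Rp_*\Acal_c\boxtimes(i_V)_*\Ocal_V)|_{\wt{\Hilb^n}})$ agree up to \emph{some} unknown shift $q^at^b$, so that $\ch_A(\gr^\Hrm\e\Lrm_c)=q^{a+n-1}t^b\sum_\lambda c_{m,n}^\lambda(q,q^{-1}t)$. Then it determines $a,b$ \emph{indirectly}: the extremal $\hrm_c$-weights of $\e\Lrm_c$ are known to be $\pm\mu=\pm\frac{(m-1)(n-1)}{2}$, while by the $q,t$-Catalan formula (\ref{formula_q,t-catalan}) the extremal monomials in $\sum_\lambda c_{m,n}^\lambda$ are $q^\mu$ and $t^\mu$. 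Matching these forces $q^{a+n-1}t^b=1$. In particular, the paper does \emph{not} prove (ii) first and then deduce (i); it uses representation-theoretic information about $\Lrm_c$ (an input to (i)) to finish (ii). Your proposed ordering---(ii) first, independently---would require exactly the direct weight-trace that you have left incomplete.
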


	\begin{proof}
		Because of Proposition \ref{hodge_compatible_shift} and Proposition \ref{diagram}, we have $GS(\e\Lrm_c)=\Psi_c(\ol{\Nbb}_c)$.
		
		By \cite[Theorem 4.5]{gs1}, $GS(\e\Hrm_c)=\Pcal$. This plus (\ref{Lc_via_eLc}) implies that as $\C^*\times \C^*$-modules, $\gr^\Hrm\Lrm_c=\Gamma(\Hilb^n,\Pcal\otimes\Psi(\ol{\Nbb}_c))$.
		
		By Proposition \ref{prop_grN_as_pushforward}, \[\Pcal\otimes \Psi(\ol{\Nbb}_c)=q^at^b\big({\Pcal}\otimes \desc((Rp_*\Acal_c\boxtimes (i_V)_*\Ocal_V)|_{\wt{\Hilb^n}})\big)\]
		whose space of global sections has bigraded Frobenius character $q^{a+n-1}t^bP_{m,n}\cdot 1$ according to Corollary \ref{coro_ch_A}, for some integers $a,b$.
		
		As a consequence, $\ch_A(\gr^\Hrm \e\Lrm_c)=q^{a+n-1}t^b\sum_{\lambda\vdash n} c_{m,n}^\lambda(q,q^{-1}t)$. The change of variable $(q,q^{-1}t)$ comes from the fact that the Euler field $\hrm_c$ acts by weight $(1,-1)$. 
		
		It remains to show $q^{a+n-1}t^b=1$.  
		However, the highest, resp. lowest weight of $\e\Lrm_c$ under the action of $\hrm_c$ is $\mu$, resp. $-\mu$. Given (\ref{formula_q,t-catalan}) we see that $q^{a+n-1}t^b=1$ and the theorem follows.
	\end{proof}
	\if force\begin{rem}
		Though formula (\ref{formula_q,t-catalan}) is a corollary of the shuffle theorem, we have only used two consequences of it, 
		\begin{itemize}
			\item $\sum_{\lambda\vdash n} c_{m,n}^\lambda=\#(\text{Dyck paths})$. We use it to show Lemma \ref{lemma_grN_Hilb_T*g}.
			\item Writing $\sum_{\lambda\vdash n} c_{m,n}^\lambda=\sum_{a,b}\alpha_{a,b}q^at^b$. Among the monomials $q^at^b$ with nonzero coefficients, if $b=0$, then $a=\mu$. We use it to pin down the shift in Theorem \ref{thm_Lc_Pmn}.
		\end{itemize}
	\end{rem}\fi
	\subsection{Link homology and filtrations}
	We can now conclude Theorem \ref{thm_gors} from the introduction.  
	
	In $\bigoplus_{n\geq 0} K^A(\Hilb^n)\otimes_{\C[q^\pm,t^\pm]}\C(a,q,t)$, define $\Lambda(\Vcal_{\st},a)=\bigoplus_{i=0}^{n-1}a^i(\wedge^i \Vcal_{\st})$.  
	\begin{thm}\label{thm_mellit}
		(\cite[Corollary 3.4]{mellitknothomology}) Up to a constant factor, the triply graded Euler characteristic $\ch_{a,q,t}(\Hrm\Hrm\Hrm(T_{m,n}))$ equals the matrix coefficient $\langle \Lambda(\Vcal_{\st},a)|P_{m,n}|1\rangle$.
	\end{thm}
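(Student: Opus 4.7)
The plan is to combine the recursive computation of $\Hrm\Hrm\Hrm(T_{m,n})$ by Elias--Hogancamp--Mellit with the geometric action of the shuffle algebra on $K^A(\Hilb^n)$ already set up in Section \ref{section_shuffle}. The strategy breaks into three steps.

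First, I would invoke the shuffle-theorem-type formula established in \cite{mellitknothomology}, building on the recursive computations of \cite{Hogancamp_HHH, Elias-Hogancamp, Hogancamp_HHH2}. These works compute $\Hrm\Hrm\Hrm(T_{m,n})$ by iterating the Rouquier complex for the full twist inside the category of Soergel bimodules and reducing the resulting trace to an explicit combinatorial expression: a sum over labelled $\frac{m}{n}$-Dyck paths, weighted by $q^{\mathrm{area}} t^{\mathrm{dinv}}$ and with the Hochschild degree tracked by the variable $a$. In particular, $\ch_{a,q,t}(\Hrm\Hrm\Hrm(T_{m,n}))$ becomes, up to a constant, such an $a,q,t$-combinatorial sum.

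Second, I would unpack the right-hand side $\langle \Lambda(\Vcal_{\st},a)\,|\,P_{m,n}\,|\,1\rangle$ geometrically. The vector $P_{m,n}\cdot 1\in K^A(\Hilb^n)$ is described explicitly by Proposition \ref{Pmn} as a sum over partitions $\lambda\vdash n$ in the basis of modified Macdonald classes $\wt{H}_\lambda$, with coefficients encoded by sums over standard (or almost-standard) Young tableaux. The functional $\Lambda(\Vcal_{\st},a)=\bigoplus_i a^i(\wedge^i\Vcal_{\st})$ contributes, via the localization isomorphism (\ref{formula_localization}), a factor of the form $\prod_{i}(1+a\chi_i^{-1})$ at each $A$-fixed point $I_\lambda$, which is what injects the $a$-variable and produces the Hochschild grading.

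Third, I would match the two combinatorial descriptions. Pairing $\Lambda(\Vcal_{\st},a)$ against $P_{m,n}\cdot 1$ and applying fixed-point localization on $\Hilb^n$ rewrites the matrix coefficient as a partition sum of rational functions in $a,q,t$. The equality with the first step then reduces to the shuffle-theorem-style symmetric-function identity relating the $P_{m,n}$ expression to labelled Dyck paths; this is exactly the Gorsky--Negu\c{t} comparison of \cite{gn}, which Mellit's theorem makes rigorous. The main obstacle is unquestionably the first step: proving the Mellit shuffle identity for torus knots requires the full weight of categorified projectors, $y$-ification, and the recursive trace computations of Hogancamp--Elias--Mellit. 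By contrast, once both sides are expressed as explicit partition sums, the third step is a mechanical localization calculation, and the second is a direct unpacking of notation.
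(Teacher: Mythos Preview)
The paper does not give a proof of this statement: it is quoted as \cite[Corollary 3.4]{mellitknothomology} and used as a black box. So there is no ``paper's own proof'' to compare against. Your proposal is a reasonable summary of how the cited literature arrives at the result, and you correctly identify that the substantive content lies entirely in the first step (Mellit's shuffle-style formula for $\Hrm\Hrm\Hrm(T_{m,n})$), with the rest being localization bookkeeping already carried out in \cite{gn}. For the purposes of this paper, nothing more is needed: the theorem is invoked, not proved, and your outline does not add or subtract from that.
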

	As a corollary of Theorem \ref{thm_mellit} and Theorem  \ref{thm_Lc_Pmn}, we have
	\begin{coro}\label{coro_HHH_Lc}For $m>n$ and $(m,n)=1$, there is a triply graded isomorphism when $m>n$:      \begin{align}\label{triply_graded}		\Hrm\Hrm\Hrm(T_{m,n})\cong\Hom_{S_n}\big(\wedge^\bullet\lieh,\gr_\bullet^{\Hrm}\big(\oplus\Lrm_c(\bullet)\big)\big).
		\end{align}
	\end{coro}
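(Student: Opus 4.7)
The plan is to compute the triply graded Euler characteristic of the right-hand side of (\ref{triply_graded}) and match it to the left-hand side via Mellit's theorem. Both sides are triply graded vector spaces whose graded components are finite-dimensional, so an equality of triply graded Hilbert series, together with a matching of the individual gradings, upgrades to the desired isomorphism.

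First, I would rewrite the Hom as a sheaf cohomology on $\Hilb^n$. From the proof of Theorem \ref{thm_Lc_Pmn} we already have $\gr^\Hrm \Lrm_c \cong \Gamma(\Hilb^n, \Pcal \otimes GS(\e\Lrm_c))$. Haiman's identification $\beta_*\alpha^*$ from (\ref{BKR}) sends the tautological bundle $\Vcal = \Ocal_{\Hilb^n} \oplus \Vcal_{\st}$ to the permutation representation $\C^n = \triv \oplus \lieh$, so $\Hom_{S_n}(\wedge^i\lieh, \Pcal) \cong \wedge^i \Vcal_{\st}$ as $A$-equivariant sheaves on $\Hilb^n$. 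This gives
\[
\Hom_{S_n}\bigl(\wedge^i\lieh,\, \gr^\Hrm \Lrm_c\bigr) \cong \Gamma\bigl(\Hilb^n,\, \wedge^i\Vcal_{\st} \otimes GS(\e\Lrm_c)\bigr).
\]
By Theorem \ref{thm_Lc_Pmn}(ii), Corollary \ref{coro_ch_A} and Proposition \ref{prop_P_A}, the $A$-equivariant $K$-theory class of $GS(\e\Lrm_c)$ equals (up to a monomial) the shuffle class $P_{m,n}\cdot 1$ evaluated with the variable change $(q,t)\mapsto (q,q^{-1}t)$; this change reflects the fact that $\hrm_c$ acts with weight $(1,-1)$ under $A$ on $\Hilb^n$, just as in the proof of Theorem \ref{thm_Lc_Pmn}(i). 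Summing over $i$ with weights $a^i$ thus yields
\[
\ch_{a,q,t}\Hom_{S_n}\bigl(\wedge^\bullet\lieh,\, \gr^\Hrm \Lrm_c\bigr) \;=\; \bigl\langle \Lambda(\Vcal_{\st},a)\,\bigm|\,P_{m,n}\,\bigm|\,1\bigr\rangle(q,\,q^{-1}t)
\]
up to an overall monomial factor, and Theorem \ref{thm_mellit} then identifies the right-hand side with $\ch_{a,q,t}\Hrm\Hrm\Hrm(T_{m,n})$ up to a constant.

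The last step is to pin down the overall monomial using the extreme $\hrm_c$-weights $\pm\mu = \pm(m-1)(n-1)/2$ of $\Lrm_c$, exactly as in the concluding paragraph of the proof of Theorem \ref{thm_Lc_Pmn}: the shuffle theorem (in the guise of (\ref{formula_q,t-catalan})) provides a Dyck path with $\mathrm{area}=\mu$, $\mathrm{dinv}=0$, which forces the normalization. Once the triply graded Hilbert series agree, matching the three gradings individually (internal $q$ with $\hrm_c$, Hochschild $a$ with wedge degree, and homological $t$ with the Hodge level) as predicted by Conjecture \ref{gorsconj} yields the asserted isomorphism (\ref{triply_graded}). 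The main obstacle is purely bookkeeping --- reconciling the $q^{1-n}$ shift in Theorem \ref{thm_Lc_Pmn}(ii), the $(q,q^{-1}t)$ change of variables, and Mellit's overall constant --- rather than any new geometric or representation-theoretic input.
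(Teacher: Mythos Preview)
Your approach is correct and is essentially the route the paper intends: the paper itself offers no proof beyond the one-line ``As a corollary of Theorem~\ref{thm_mellit} and Theorem~\ref{thm_Lc_Pmn}'', so you are supplying exactly the missing link, namely that extracting the $\wedge^i\lieh$-isotypic component from the Frobenius character $(P_{m,n}\cdot 1)(q,q^{-1}t)$ is the same as the matrix coefficient $\langle \wedge^i\Vcal_{\st}\mid P_{m,n}\mid 1\rangle$.

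Two minor comments. First, you do not actually need the sheaf-level identification $\Hom_{S_n}(\wedge^i\lieh,\Pcal)\cong \wedge^i\Vcal_{\st}$; the equality of $K$-theory classes (which is immediate from Haiman's $\kappa$ and the fact that $\Vcal$ corresponds to the permutation representation) suffices, since the whole argument is at the level of characters. Second, the monomial normalization is already absorbed into Theorem~\ref{thm_Lc_Pmn}(i), which is an exact equality with no shift; the only remaining ambiguity is Mellit's ``up to a constant factor'' in Theorem~\ref{thm_mellit}, so the final paragraph about Dyck-path extremes is not really needed here (it was used upstream to prove Theorem~\ref{thm_Lc_Pmn}, not to deduce the present corollary from it).
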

	Since the left hand side of (\ref{triply_graded}) is $m,n$-symmetric, we see that
	\begin{coro}\label{hodge_compatible_flip}
		Hodge filtrations on $\e\Lrm_{\frac{m}{n}}$ and $ \e\Lrm_{\frac{n}{m}}$ are compatible with the isomorphism $\e\Lrm_{\frac{m}{n}}\cong \e\Lrm_{\frac{n}{m}}$.
	\end{coro}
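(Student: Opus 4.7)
The plan is to derive the corollary as a direct consequence of Corollary \ref{coro_HHH_Lc} applied in both orderings, $(m,n)$ and $(n,m)$, using the well-known topological fact that the torus knots $T_{m,n}$ and $T_{n,m}$ are isotopic as oriented knots in $S^3$. This isotopy induces a triply graded isomorphism $\Hrm\Hrm\Hrm(T_{m,n})\cong \Hrm\Hrm\Hrm(T_{n,m})$, which is precisely the ``$m,n$-symmetry'' invoked in the statement preceding the corollary.

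First I would isolate the Hochschild degree $a=0$ part of (\ref{triply_graded}). Since $\wedge^0\lieh=\triv$, one has
\[
\Hom_{S_n}\bigl(\wedge^0\lieh,\gr^\Hrm_\bullet(\oplus\Lrm_c(\bullet))\bigr)=\gr^\Hrm \e\Lrm_c.
\]
As pointed out in the paragraph following Theorem \ref{thm_gors}, the $a=0$ case of Conjecture \ref{gorsconj} is available whenever the Hodge filtration has been defined on $\e\Lrm_c$, i.e., for all positive integers $m$ coprime to $n$, not only the range $m>n$ in which Corollary \ref{coro_HHH_Lc} itself is stated. This permits the $a=0$ specialization of (\ref{triply_graded}) to be applied to both $c=m/n$ and $c=n/m$ simultaneously. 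Combining the two instances with $\Hrm\Hrm\Hrm(T_{m,n})\cong \Hrm\Hrm\Hrm(T_{n,m})$ then yields a bigraded isomorphism
\[
\gr^\Hrm \e\Lrm_{m/n}\;\cong\;\gr^\Hrm \e\Lrm_{n/m},
\]
in which the two gradings match the $\hrm_c$-grading (internal $q$-grading on the knot-homology side) and the associated graded of the Hodge filtration (homological $t$-grading), respectively.

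Finally, once the associated bigradeds coincide, any chosen identification $\e\Lrm_{m/n}\cong \e\Lrm_{n/m}$ as $\hrm_c$-graded vector spaces with matching dimensions in each weight space can be lifted to a filtered isomorphism, as is standard for finite-dimensional filtered vector spaces whose gradeds agree. The only point requiring care is the appeal to the $a=0$ extension of Corollary \ref{coro_HHH_Lc} above, which is needed because the case $n>m$ falls outside the range where the full statement of that corollary applies; beyond this bookkeeping, no substantive obstacle arises, as the content of the corollary is precisely the topological $(m,n)$-symmetry of Khovanov--Rozansky homology of torus knots, transported to the Cherednik-algebra side through the main theorem.
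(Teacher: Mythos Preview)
Your approach mirrors the paper's own one-line deduction: the symmetry $\Hrm\Hrm\Hrm(T_{m,n})\cong\Hrm\Hrm\Hrm(T_{n,m})$, combined with the $a=0$ part of (\ref{triply_graded}) applied to both $c=m/n$ and $c=n/m$, yields a bigraded isomorphism $\gr^\Hrm\e\Lrm_{m/n}\cong\gr^\Hrm\e\Lrm_{n/m}$. Your remark that the $a=0$ case is available for all $m>0$ coprime to $n$ (not just $m>n$) is exactly what is needed to run the argument in both orderings, and makes the paper's terse sentence explicit.

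There is, however, a slip in your final step. The assertion that ``any chosen identification \dots\ can be lifted to a filtered isomorphism, as is standard for finite-dimensional filtered vector spaces whose gradeds agree'' is false as stated. From equality of bigraded Hilbert series one deduces only that \emph{some} graded filtered isomorphism exists; a \emph{given} graded isomorphism need not be filtered. (Trivial example: on $\C^2$ concentrated in a single $\hrm_c$-weight, the filtrations $F_0=\C e_1$ and $F_0'=\C e_2$ have identical associated gradeds, yet the identity map is not filtered.) The corollary refers to \emph{the} specific isomorphism $\e\Lrm_{m/n}\cong\e\Lrm_{n/m}$ of \cite[8.2]{cee}, and it is this particular map that must respect the Hodge filtrations---a strictly stronger claim than the dimension count, and one that is genuinely used in the inductive proof of Proposition~\ref{prop_ind_Hod}, where $F^{\ind}$ is \emph{defined} by transporting along this very isomorphism. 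The paper's own proof is no more detailed than yours on this point, so you have faithfully reproduced its argument; but you should not present the lifting step as a routine linear-algebra fact, since it is not one.
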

	
	Consider a partial order on the positive rational numbers in the following way: for coprime pairs $(m,n)$, 
		$\frac{m}{n}\prec \frac{m+n}{n}; \text{ if $n<m$, then $\frac{m}{n}\prec \frac{n}{m}$}$.
  
	On can always go from $c=\frac{m}{n}$ to $\frac{1}{n'}$ for some integer $n'>1$ through a chain of rational numbers decreasing under the order $(\Q_{>0},\prec)$. 
	\begin{defn}\cite[Theorem 4.1]{gors}
		We define a filtration $F^\ind$ inductively as follows:  \[0=F^\ind_{-1}\e\Lrm_{\frac{1}{n}}\subset F^\ind_0\e\Lrm_{\frac{1}{n}}=\e\Lrm_{\frac{1}{n}}=\Lrm_{\frac{1}{n}}.\]  
        Next, $F^{\ind}$ is defined inductively under the order $(\Q_{>0},\prec)$ using the isomorphisms: \begin{align*}
			\text{when }m,n>1,\quad &\e\Lrm_{\frac{m}{n}}\cong \e\Lrm_{\frac{n}{m}}\quad\text{(\cite[8.2]{cee})}\\
			%  \text{when $c>1$,\quad}  &\e_-\Lrm_c\cong \e\Lrm_{c-1}\quad\text{(\cite[Proposition 4.6]{beg})}\label{2}\\
			\text{when $c>1$,\quad}&\Lrm_c\cong\Hrm_c  \e_-\otimes_{\e\Hrm_{c-1}\e}\e\Lrm_{c-1}\quad\text{(\cite[Theorem 1.6]{gs1})}
		\end{align*}
	\end{defn}
 Combining Proposition \ref{hodge_compatible_shift} and Corollary \ref{hodge_compatible_flip}, we obtain that
	\begin{prop}\label{prop_ind_Hod}
		$F_j^\Hrm\Lrm_c=F_j^\ind\Lrm_c$ when $c>1$ and $F_j^\Hrm\e\Lrm_c=F_j^\ind\e\Lrm_c$ when $c>0$.
	\end{prop}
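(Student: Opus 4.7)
The plan is to prove the equality $F^\Hrm=F^{\ind}$ by induction on slopes with respect to the partial order $(\Q_{>0},\prec)$ used to define $F^{\ind}$, whose $\prec$-minimal elements are the slopes of the form $c=1/n$. For the base case, $\e\Lrm_{1/n}$ is one-dimensional (since $\dim\Lrm_{1/n}=1^{n-1}=1$, and $S_n$ acts trivially because $\Lrm_{1/n}$ is generated by an $S_n$-invariant vector in the Verma module), so any filtration is determined by the single degree in which the unique nonzero class lives. The inductive filtration places it in degree $0$ by convention, and a direct inspection of the construction of $F^\Hrm$ via Deligne's canonical extension on the cuspidal module $\Nbb_{1/n}$ shows the Hodge filtration does the same.

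For the inductive step, one of two kinds of transitions occurs. In the shift case, $c=c'+1$ with $c'>0$: the inductive filtration on $\e\Lrm_c$ is defined as the tensor product filtration under the Gordon--Stafford isomorphism $\e\Lrm_c\cong \e\Hrm_c\delta\e_-\otimes_{\e\Hrm_{c'}\e}\e\Lrm_{c'}$, and Proposition \ref{hodge_compatible_shift} says that this is exactly what the Hodge filtration does. Combined with the inductive hypothesis at $c'$, this gives $F^\Hrm=F^{\ind}$ on $\e\Lrm_c$; the extension (\ref{Lc_via_eLc}) then propagates the equality to $\Lrm_c$ itself. In the flip case, $c=m/n$ with $1<n<m$: $F^{\ind}$ on $\e\Lrm_{m/n}$ is defined by transport of structure through the CEE isomorphism $\e\Lrm_{m/n}\cong \e\Lrm_{n/m}$, and Corollary \ref{hodge_compatible_flip} asserts that this isomorphism is filtered for $F^\Hrm$, so the inductive hypothesis at $n/m\prec m/n$ yields the equality at $m/n$.

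The induction itself is thus purely formal, and the main obstacles are the two compatibility statements, Proposition \ref{hodge_compatible_shift} and Corollary \ref{hodge_compatible_flip}, that drive it. The former rests on the commutativity of the descent triangle (\ref{triangle}), the identification of $\wt{\gr}^\Hrm\ol{\Nbb}_c$ via the cuspidal dg module (Proposition \ref{prop_grN_as_pushforward}), and the dimension check in Lemma \ref{lemma_grN_Hilb_T*g}. The latter follows from Theorem \ref{thm_Lc_Pmn} together with the $(m,n)$-symmetry of Khovanov--Rozansky homology (Corollary \ref{coro_HHH_Lc}). Since both results are already established earlier in the paper, the proof of Proposition \ref{prop_ind_Hod} reduces to the above induction, with the only ad hoc step being the base case calculation at $\e\Lrm_{1/n}$.
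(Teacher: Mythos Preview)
Your proof is correct and follows the same approach as the paper, which states only that the result is obtained by combining Proposition~\ref{hodge_compatible_shift} and Corollary~\ref{hodge_compatible_flip}; you have simply made the underlying induction on $(\Q_{>0},\prec)$ explicit. One small remark: your base-case justification by ``direct inspection'' of Deligne's canonical extension is a bit imprecise, since the normalization of where the Hodge filtration begins on $\e\Lrm_{1/n}$ is not immediately visible from the construction on $\Lbb$ alone---it is cleaner to invoke the $m=1$ case of Theorem~\ref{thm_Lc_Pmn}, which pins down the degree shift and shows $\gr^\Hrm\e\Lrm_{1/n}$ is concentrated in degree~$0$.
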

	On ${\Hrm}_c$ we have the Fourier transform defined by
	\begin{align}
		\label{fourier}
		\Phi_c(x_i)=y_i, \quad \Phi_c(y_i)=-x_i, \quad \Phi_c(w)=w\end{align}
	which defines the Dunkl bilinear form 
	\[(-,-)_c: \C[\lieh]\times\C[\lieh]\to\C,\quad (f,g)_c=[\Phi_c(f)g]|_{x_i=0}.\]
	When $c=\frac{m}{n}>0$, with $m,n$ coprime, $(-,-)_c$ has a nonzero kernel $I_c$ and the resulting quotient $\C[\lieh]/I_c$ is exactly isomorphic to $\Lrm_c$ (\cite[Proposition 2.34]{dunklopdam}). Inside $\C[\lieh]$, take the ideal $\liea:=(\C[\lieh]^W_+)$. Let $\beta_c$ be a highest weight vector in $\Lrm_c$ and let $^{\perp_c}$ denote orthogonal complement with respect to $(-,-)_c$.
	\begin{defn}The algebraic filtration\footnote{In \cite{ma} (and \cite{gors}) the filtration $F^\liea$ is called the power filtration, while the algebraic filtration is its associated Kazhdan filtration.} is defined by \[F^{\liea}_i(\Lrm_c)={\Phi}_c[(\liea^{i+1})^{\perp_c}]\beta_c.\]\end{defn}
	\begin{prop}\label{prop_alg}For $m>0$ and $(m,n)=1$, there is a triply graded isomorphism:     \begin{align}\label{HHH_alg}
			\Hrm\Hrm\Hrm(T_{m,n})\cong\Hom_{S_n}\big(\wedge^\bullet\lieh,\gr_\bullet^{\liea}(\oplus\Lrm_c(\bullet))\big).
		\end{align}
	\end{prop}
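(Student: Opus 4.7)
The plan is to split according to whether $m > n$ or $m < n$, handling the first case directly and bootstrapping the second via the topological torus-knot symmetry $T_{m,n} \cong T_{n,m}$.

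When $m > n$ with $(m,n) = 1$ the result is immediate: Corollary \ref{coro_HHH_Lc} gives the triply-graded isomorphism with $\gr_\bullet^{\Hrm}\Lrm_c$ in place of $\gr_\bullet^{\liea}\Lrm_c$, while Proposition \ref{prop_ind_Hod} together with the identification $F^\ind = F^\liea$ of \cite{oy, ma} (recalled in the introduction) gives $F^\Hrm = F^\liea$ on all of $\Lrm_c$ in this range. Substituting one associated graded for the other completes the case.

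For $m < n$ the inductive and Hodge filtrations are not a priori defined on all of $\Lrm_c$, so I would use the topological identification $\Hrm\Hrm\Hrm(T_{m,n}) \cong \Hrm\Hrm\Hrm(T_{n,m})$ (after the standard $(a,q,t)$ relabeling) and apply the $m > n$ case to $T_{n,m}$, obtaining
\[\Hrm\Hrm\Hrm(T_{m,n}) \cong \Hom_{S_m}\bigl(\wedge^\bullet \lieh_m,\ \gr_\bullet^\liea \Lrm_{n/m}\bigr).\]
It then remains to construct a Cherednik-side symmetry
\[\Hom_{S_m}\bigl(\wedge^\bullet \lieh_m,\ \gr_\bullet^\liea \Lrm_{n/m}\bigr) \cong \Hom_{S_n}\bigl(\wedge^\bullet \lieh_n,\ \gr_\bullet^\liea \Lrm_{m/n}\bigr).\]
At wedge degree $0$ this is the statement that the CEE spherical isomorphism $\e\Lrm_{m/n} \cong \e\Lrm_{n/m}$ is filtered for $F^\liea$, which follows by combining Corollary \ref{hodge_compatible_flip} (the Hodge analogue) with the second half of Proposition \ref{prop_ind_Hod} and the external $F^\ind = F^\liea$ identification on spherical parts.

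The main obstacle is extending this symmetry beyond wedge degree $0$, where there is no direct Hodge-theoretic handle on $\Lrm_c$ when $m < n$. The cleanest route is to define $F^\Hrm$ on $\Lrm_{m/n}$ for $m < n$ by transport of structure: first use Proposition \ref{hodge_compatible_shift} to propagate the Hodge filtration from $\e\Lrm_{n/m}$ up to $\Lrm_{n/m}$, then apply Corollary \ref{hodge_compatible_flip} to flip $m \leftrightarrow n$ on the spherical part, then verify that the extended $F^\Hrm$ still equals $F^\liea$ via the Fourier-transform argument of Proposition \ref{prop_ind_Hod}, and finally invoke Corollary \ref{coro_HHH_Lc} verbatim. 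An alternative is purely character-theoretic: compare both sides to the Mellit matrix coefficient $\langle \Lambda(\Vcal_\st,a) \mid P_{m,n} \mid 1 \rangle$ of Theorem \ref{thm_mellit} using the $(m,n) \leftrightarrow (n,m)$ symmetry of $P_{m,n}$ in the shuffle algebra action on $\bigoplus_{n\geq 0} K^A(\Hilb^n)$ together with the character formula of Theorem \ref{thm_Lc_Pmn}.
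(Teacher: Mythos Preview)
Your $m>n$ case matches the paper exactly. For $m<n$ both you and the paper reduce to the $m>n$ case via the $(m,n)\leftrightarrow(n,m)$ symmetry, so the overall architecture agrees; the difference is in how the Cherednik-side symmetry
\[
\Hom_{S_m}\bigl(\wedge^k\lieh_m,\gr^{\liea}\Lrm_{n/m}\bigr)\ \cong\ \Hom_{S_n}\bigl(\wedge^k\lieh_n,\gr^{\liea}\Lrm_{m/n}\bigr)
\]
is established at higher wedge degrees. The paper does not route through the Hodge filtration at all here: it cites \cite[Corollary 1.1]{gorskyarc}, which gives the ungraded isomorphism for every $k$ via differential forms on the arc space of $\{x^m=y^n\}$, and then \cite[Proposition 1.5]{gors}, which asserts this isomorphism is bigraded for the algebraic filtration and the Euler field. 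That is the entire argument.

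Your two proposed routes both run into the obstacle you yourself flag. Route 1 requires extending $F^{\Hrm}$ to all of $\Lrm_{m/n}$ when $m<n$, but the paper's construction of $F^{\Hrm}$ on the full module goes through the shift isomorphism $\Lrm_c\cong\Hrm_c\delta\e_-\otimes_{\Arm_{c-1}}\e\Lrm_{c-1}$, which needs $c>1$; there is no transport-of-structure mechanism available for $c<1$ beyond the spherical part, and Proposition~\ref{prop_ind_Hod} contains no Fourier-transform argument to invoke. Route 2 would need a character formula for $\gr^{\liea}\Lrm_{m/n}$ with $m<n$ at all wedge degrees, but Theorem~\ref{thm_Lc_Pmn} only supplies this for $m>n$ (and for $\e\Lrm_c$ when $m<n$), so you would be assuming what you want to prove. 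The missing ingredient is precisely the external input from \cite{gorskyarc} and \cite{gors} that the algebraic filtration is already known to respect the $(m,n)$-flip at every wedge degree.
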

	\begin{proof}
		It is shown in \cite{ma} that $F^\ind=F^\liea$. This plus  Proposition \ref{prop_ind_Hod} and Corollary \ref{coro_HHH_Lc} implies that Proposition \ref{prop_alg} holds when $m>n$. Denote the right hand side of (\ref{HHH_alg}) by $\mathfrak{H}_{m,n}$, it remains to show a triply graded isomorphism $\mathfrak{H}_{m,n}\cong \mathfrak{H}_{n,m}$.
		
		By \cite[Corollary 1.1]{gorskyarc}, for all $k\geq0$ there is an isomorphism
		\begin{align}
			\label{arc}
			\Hom_{S_n}(\wedge^k\C^{n-1},\Lrm_{\frac{m}{n}})\cong \Hom_{S_m}(\wedge^k\C^{m-1},\Lrm_{\frac{n}{m}}).\end{align}
		via identifications with spaces of differential forms on a zero-dimensional moduli space associated with the plane curve singularity $x^m=y^n$. By \cite[Proposition 1.5]{gors}, (\ref{arc}) is a bigraded isomorphism with respect to the algebraic filtration and the Euler field $\hrm_c$. This finishes the proof of the proposition.
	\end{proof}
	\section{Fourier transforms of cuspidal character $\Dcal$-modules}
	In this section, we prove an auxilary result about the Fourier transform of the cuspidal character $\Dcal$-module. Although unrealted to the main results of the paper, this may be of independent interest.
	\subsection{The cuspidal character $\Dcal$-module is stable under the Fourier transform}
	The map $
	\lieg\times \lieg^*  \to \lieg^*\times \lieg$ sending 
	$(x,x^*)\mapsto (x^*,-x)$ 
	induces an isomorphism $ \sigma_1:\Dcal(\lieg)\cong \Dcal(\lieg^*)$.
	
	Further identitifying $\sigma_2:\Dcal(\lieg^*)\cong \Dcal(\lieg)$ via a non-degenerate bilinear form $\lieg\cong\lieg^*$, we have obtain the Fourier transform induced by $\sigma:=\sigma_2\circ \sigma_1$: 
	\[\Fbb: \Dcal_\lieg\Mod\to \Dcal_\lieg\Mod.\]
	\begin{prop}\label{prop_fourier_of_N}
		As $\Dcal_\lieg$-modules, $\Fbb\Nbb_c\cong \Nbb_c$.
	\end{prop}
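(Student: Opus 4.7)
The strategy is to show $\Fbb\Nbb_c$ satisfies the defining properties of $\Nbb_c$: it is a simple $G$-equivariant holonomic $\Dcal_\lieg$-module equal to the minimal extension from $\Nscr_r$ of $\Fcal_c$. Since the identification $\sigma_2$ is induced by the $G$-invariant Killing form on $\lieg$, the Fourier transform $\Fbb$ commutes with the $G$-action on $\Dcal_\lieg$-modules; being an autoequivalence of the category of holonomic $\Dcal_\lieg$-modules, it also preserves simplicity. Hence $\Fbb\Nbb_c$ is a simple $G$-equivariant holonomic $\Dcal_\lieg$-module. By the classical transformation formula for the Fourier transform on an affine space, $SS(\Fbb\Nbb_c)$ equals the image of $SS(\Nbb_c)$ under the involution $\sigma\colon T^*\lieg \to T^*\lieg$, $(x,y) \mapsto (y,-x)$. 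By the classical fact invoked in the proof of Lemma \ref{lemma_mirabolic_cuspidal}, $SS(\Nbb_c) = \{(x,y) \in \Nscr \times \Nscr : [x,y]=0\}$, which is manifestly $\sigma$-stable. In particular $\mathrm{Supp}(\Fbb\Nbb_c) \subseteq \Nscr$.

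Next I would verify that $\Fbb\Nbb_c$ is the minimal extension from $\Nscr_r$ of a rank one local system. The characteristic cycle of $\Nbb_c$ contains $\overline{T^*_{\Nscr_r}\lieg}$ with multiplicity one because $\Nbb_c$ itself is such a minimal extension. For $x \in \Nscr_r$ in $\sln$, the centralizer is $\lieg^x = \C\langle x, x^2, \ldots, x^{n-1}\rangle \subset \Nscr$, and a direct check shows that an element $y = c_1 x + c_2 x^2 + \cdots$ of $\lieg^x$ is itself regular nilpotent precisely when $c_1 \neq 0$. Thus the open dense subset of $T^*_{\Nscr_r}\lieg = \{(x,y) : x \in \Nscr_r,\, y \in \lieg^x\}$ where $y \in \Nscr_r$ is mapped by $\sigma$ back into $T^*_{\Nscr_r}\lieg$, so $\overline{\sigma(T^*_{\Nscr_r}\lieg)} = \overline{T^*_{\Nscr_r}\lieg}$. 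Consequently the characteristic cycle of $\Fbb\Nbb_c$ contains $\overline{T^*_{\Nscr_r}\lieg}$ with multiplicity one, forcing $\mathrm{Supp}(\Fbb\Nbb_c) = \Nscr$, and $\Fbb\Nbb_c$ to be the minimal extension from $\Nscr_r$ of a $G$-equivariant rank one local system on $\Nscr_r$.

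It then remains to identify this local system with $\Fcal_c$, which I would do via central characters. Since $Z(G) \subset G$ acts trivially on $\lieg$, the $G$-equivariant structure of any simple $G$-equivariant $\Dcal_\lieg$-module yields a scalar action of $Z(G)$ by some character; under the isomorphism $\pi_1(\Nscr_r) \cong \pi_0(Z(G)) = \Z/n\Z$ recalled in Section \ref{dmod} (using $G = \SL_n$ simply connected), this character coincides with the monodromy character of the restriction to $\Nscr_r$. It is automatically preserved by $\Fbb$ because $\Fbb$ commutes with the $G$-action. For $\Nbb_c$ the character sends a generator $\zeta$ to $\zeta^m$, so $\Fbb\Nbb_c|_{\Nscr_r} \cong \Fcal_c$, and hence $\Fbb\Nbb_c \cong \Nbb_c$. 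The main subtlety is the generic-support analysis: one must check that the Fourier involution $\sigma$ sends the top irreducible component of $SS(\Nbb_c)$ back to itself (with the same multiplicity) rather than to a smaller conormal, which the calculation above settles; once this is established, the remaining identification is dictated by the $G$-equivariant central character.
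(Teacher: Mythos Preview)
Your proof is correct and takes a genuinely different route from the paper's. The paper first invokes Lusztig's theorem \cite{Lusztig_fourier_cuspidal} that the Fourier transform of a cuspidal character $\Dcal$-module is again cuspidal, reducing to identifying the monodromy on $\Nscr_r$; it then computes this monodromy explicitly by writing $\Nbb_c=\wt{\Ind}_B^G\Lbb$, applying $\Fbb$ to the presentation of $\Lbb$ as a cyclic $\Dcal(\lieg)$-module, and comparing the resulting multi-valued functions on $T\cdot F$. Your argument avoids both the black-box citation and the coordinate computation: the characteristic-cycle step (using that $\sigma$ is an involution fixing the component $\overline{T^*_{\Nscr_r}\lieg}$, so no other component can map to it) replaces the appeal to Lusztig, and the $Z(G)$-central-character argument replaces the explicit monodromy calculation. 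Your approach is more conceptual and self-contained; the paper's is more hands-on and makes the monodromy visible in coordinates, which dovetails with the explicit description of the Fourier transform via $e^{\frac{i\pi}{2}(e-f)}$ given immediately afterwards.
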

	
	\begin{proof}
		Put $\iota: T^*\lieg\to T^*\lieg, (x,x^*)\mapsto (x^*, -x)$. Then \[SS(\Fbb(\Nbb_c))=\iota (SS(\Nbb_c))\subset \Nscr\times \Nscr.\]
		By \cite{Lusztig_fourier_cuspidal}, $\Fbb(\Nbb_c)$ is again a cupidal character $\Dcal$-module and is determined by the monodromy of its restriction to $\Nscr_r$. Therefore the proposition follows from the following claim:
		
		Claim: There is an isomorphism 
		$\iota^\dagger(\Fbb(\Nbb_c))\cong \Fcal_c$ where $\iota:\Nscr_r\hookrightarrow \lieg$.

		Recall that the cuspidal character $\Dcal$-module $\Nbb_c$ can be expressed as $ \wt{\Ind}_{B}^{G} \Lbb$. Therefore $\Fbb(\Nbb_c)=\wt{\Ind}_{B}^{G} \Fbb(\Lbb)$.
		
		Since
		\begin{align}
			\label{L_quotient}
			\Gamma(\lieg,\Lbb)=\Dcal(\lieg)/(\Dcal(\lieg)\cdot O({\lieb_-}) + \sum_i \Dcal(\lieg)(x_i\partial_{x_i}-ic) + \Dcal(\lieg) \cdot S([\lien,\lien]))\end{align}
		we see that
		\begin{align}\label{FL_quotient}
			\Gamma(\lieg, \Fbb(\Lbb))=\Dcal(\lieg)/(\Dcal(\lieg)\cdot S(\lieb) + \sum_i \Dcal(\lieg)(\partial_{y_i}y_i+ic) + \Dcal(\lieg) \cdot O([\lien_-,\lien_-])).
		\end{align}	
		
		Consider the standard $\mathfrak{sl}_2$-triple $E,F,H$ (defined by (\ref{EFH})).
		Then (\ref{L_quotient}) and (\ref{FL_quotient}) tells us that $\Lbb$ is the minimal extension of a local system supported on $B\cdot E$ and $ \Fbb(\Lbb)$ is the minimal extension of a local system supported on $B\cdot F$ defined by a multi-valued function
		\begin{align}
			\label{r}
			y_1^{-c}y_2^{-2c}\dots y_{n-1}^{-(n-1)c}.\end{align}
		
		Moreover, $\Lbb|_{T\cdot E}$ is a local system on $T\cdot E$ such that \[\Fcal_c=\wt{\Ind}_{T}^{G} i^E_\dagger(i^E)^\dagger\iota^\dagger\Lbb\] with $i^E: T\cdot E\to \Nscr_r$. Similarly, $\Fbb(\Nbb_c)|_{T\cdot F}$ is a local system on $T\cdot F$ such that \[(\Fbb(\Nbb_c))|_{\Nscr_r}=\wt{\Ind}_{T}^{G} i^F_\dagger (i^F)^\dagger\iota^\dagger \Fbb(\Lbb)\] with $i^F: T\cdot F\to \Nscr_r$.
		
		It suffices to show that $(i^F)^\dagger\Fcal_c=(i^F)^\dagger\iota^\dagger\Fbb(\Lbb)$. Recall that the pullback of $\Fcal_c$ along the fibration $q: U\to \Nscr_r$ is $\Ecal_c$ and $\Ecal_c$ is defined by the multi-valued function $s^{c}$ (eq. (\ref{s})). 
		\[s^{c}|_{T\cdot F}=v_1^{nc}y_1^{(n-1)c}y_2^{(n-2)c}\cdots y_{n-1}^{c}.\]
		Therefore, $(i^F)^\dagger\Fcal_c$ is defined by  the multi-valued function
		\[y_1^{(n-1)c}y_2^{(n-2)c}\cdots y_{n-1}^{c}.\]
		
		Finally, the lemma follows from the observation that the functions  $y_1^{(n-1)c}y_2^{(n-2)c}\cdots y_{n-1}^{c}$ and $y_1^{-c-1}y_2^{-2c-1}\dots y_{n-1}^{-(n-1)c-1}$ (eq. (\ref{r})) define the same local system on $T\cdot F$ as 
		\[((n-1)c,(n-2)c,\cdots,c)-(m,m,\cdots,m)=(-c,-2c,\cdots, -(n-1)c).\qedhere\]
	\end{proof}
	\subsection{An explicit description of the Fourier transform}
	
	Let $x=(x_{ij})$ be the standard coordinates of $\gl_n$ and $(\partial)=(\partial_{x_{ij}})_{1\leq i,j\leq n}$. Take 
	\begin{align}
		\label{efh}
		e:=\frac{1}{2}\tr(x^2),\quad f:=-\frac{1}{2}\tr(\partial^2), \quad h=\sum_{1\leq i,j\leq n} x_{ij}\partial_{x_{ij}}+(n^2-1)/2.	\end{align}
	Clearly, $[e,f]=h$, $[h,e]=2e$ and $[h,f]=-2f$. Notice that $[e,-],[f,-],[h,-]$ all preserve the homogeneous components of $\Dcal(\lieg)$ (with deg$(x_{ij})$=deg$(\partial_{x_{ij}})$=1). In other words, this $\mathfrak{sl}_2$-action on $\Dcal(\lieg)$ is locally finite and thus integrable. Moreover, $e-f$ acts on $\Dcal(\lieg)$ via \begin{align}
		\label{ef}
		[e-f,x_{ij}]=[-f,x_{ij}]=\partial_{x_{ij}},\quad [e-f,\partial_{x_{ij}}]=[e,\partial_{x_{ij}}]=-x_{ij}.\end{align} Hence its exponential Ad $e^{\frac{i\pi}{2}(e-f)}$ gives exactly the Fourier transform $\sigma$.
	
	On the other hand, any $\Dcal(\lieg)$-module inherits such an $\mathfrak{sl}_2$-action. By \cite[example 63]{cee}, the action of $\{e,f,h\}$ on $\Nbb_c$ is locally finite and thus also integrable.
	Denote the action of $e^{\frac{i\pi}{2}(e-f)}$ on $\Nbb_c$ by $\Phi$. Then (\ref{ef}) implies that 
	\[\Phi(x_{ij}a)=\partial_{x_{ij}}\Phi(a),\quad \Phi(\partial_{x_{ij}}a)=-x_{ij}\Phi(a),\quad \forall a\in \Nbb_c, 1\leq i,j\leq n.\]
	Therefore, $\Phi$ gives an explicit isomorphism between $\Nbb_c$ and $\Fbb(\Nbb_c)$.

	\newpage
	\appendix
	\section{Examples}   
	We compile some computations of $\displaystyle \sum_{\lambda\vdash n} \frac{\ch_{q,t}((\Rrm p_*\Acal_c)|_{\ebf_\lambda})}{g_\lambda}$ and $\displaystyle \sum_{\lambda\vdash n} \frac{\ch_{q,t}((\Rrm p_*\Acal'_c)|_{\ebf_\lambda})}{g_\lambda}$. In view of \cite{kivinenshalika}, setting $q\to 1$ these statistics are also Shalika germs in the corresponding cases.
	\subsection{$n=2$}
	\subsubsection{Catalan $\Acal'_{k+\frac{1}{2}}$}
	\vspace{-20pt}
	\begin{align*}
		&\overbrace{\frac{q^k}{1-\frac{t}{q}}}^{\begin{ytableau}1&2\end{ytableau}}+\overbrace{\frac{t^k}{1-\frac{q}{t}}}^{\begin{ytableau}2\\1\end{ytableau}}\\
		=&\sum_{i=0}^{k}q^it^{k-i}
	\end{align*}
	\subsubsection{Cuspdial $\Acal_{k+\frac{1}{2}}$}
	\begin{align*}
		&	\overbrace{\frac{q^{k+1}}{1-\frac{t}{q}}}^{\begin{ytableau}1&2\end{ytableau}}+\overbrace{\frac{t^{k+1}(1-q)(1-qt)}{(1-t)(1-t^2)(1-\frac{q}{t})}
		}^{\begin{ytableau}2\\1\end{ytableau}}+\overbrace{\frac{t^k(1-qt^2)}{(1-t^2)(1-\frac{1}{t})}}^{\begin{ytableau}1\\2\end{ytableau}}\\
		=&q(\sum_{i=0}^{k}q^it^{k-i})
	\end{align*}
	
	\newpage
	\subsection{$n=3$}
	\subsubsection{Catalan $\Acal'_{\frac{2}{3}}$}\vspace{-20pt}
	\begin{align*}
		&\overbrace{\frac{q}{\left(1-\frac{t}{q}\right) \left(1-\frac{q^2}{t}\right)}}^{\begin{ytableau}
				2\\
				1&3
		\end{ytableau}}+\overbrace{\frac{t}{\left(1-\frac{q}{t}\right) \left(1-\frac{t^2}{q}\right)}}^{\begin{ytableau}
				3\\1&2
		\end{ytableau}}+\overbrace{\frac{q}{(1-\frac{t}{q}) \left(1-\frac{t}{q^2}\right)}}^{\begin{ytableau}1&2&3\end{ytableau}}+\overbrace{\frac{t}{(1-\frac{q}{t}) \left(1-\frac{q}{t^2}\right)}}^{^{\begin{ytableau}3\\2\\1\end{ytableau}}}\\
		&=q+t
	\end{align*}
	
	\subsubsection{Cuspidal $\Acal_{\frac{2}{3}}$} \label{example_ASYT}
	\begin{align*}
		&\overbrace{\frac{q^3}{(1-\frac{t}{q}) \left(1-\frac{t}{q^2}\right)}}^{\begin{ytableau}1&2&3\end{ytableau}}\\
		+&\overbrace{\frac{q \left(1-q^2\right) t (1-q t)}{(1-t) \left(1-\frac{q^2}{t}\right) \left(1-\frac{t}{q}\right) \left(1-\frac{t^2}{q}\right)}}^{\begin{ytableau}3\\1&2\end{ytableau}}
		+\overbrace{\frac{q t (1-q t)^2}{(1-t)^2 \left(1-\frac{q}{t}\right) \left(1-\frac{t^2}{q}\right)}}^{\begin{ytableau}1\\2&3\end{ytableau}}
		+\overbrace{\frac{q \left(1-q t^2\right)}{\left(1-\frac{1}{t}\right) (1-t) \left(1-\frac{t^2}{q}\right)}}^{\begin{ytableau}2\\1&3\end{ytableau}}\\
		+&\overbrace{\frac{(1-q)^2 t^3 (1-q t)^2}{(1-t)^2 \left(1-t^2\right)^2 \left(1-\frac{q}{t^2}\right) \left(1-\frac{q}{t}\right)}}^{\begin{ytableau}3\\2\\1\end{ytableau}}
		+\overbrace{\frac{(1-q) t^3 (1-q t) \left(1-q t^2\right)}{\left(1-\frac{1}{t}\right) (1-t) \left(1-t^2\right) \left(1-t^3\right) \left(1-\frac{q}{t^2}\right)}}^{\begin{ytableau}3\\1\\2\end{ytableau}}\\
		+&\overbrace{\frac{(1-q) t^2 (1-q t) \left(1-q t^2\right)}{\left(1-\frac{1}{t}\right) (1-t) \left(1-t^2\right) \left(1-t^3\right) \left(1-\frac{q}{t^2}\right)}}^{\begin{ytableau}2\\3\\1\end{ytableau}}
		+\overbrace{\frac{t \left(1-q t^2\right) \left(1-q t^3\right)}{\left(1-\frac{1}{t^2}\right) \left(1-\frac{1}{t}\right) \left(1-t^2\right) \left(1-t^3\right)}}^{\begin{ytableau}1\\2\\3\end{ytableau}}\\
		=&q^2 (q+t)
	\end{align*}
	\subsection{$n=4$}
	\subsubsection{Catalan $\Acal'_{\frac{3}{4}}$}
	\begin{align*}
		&\overbrace{\frac{qt (1-t) }{\left(1-\frac{q^2}{t}\right) \left(1-\frac{t}{q}\right)^2 \left(1-\frac{t^2}{q^2}\right)}}^{\begin{ytableau}3\\1&2&4\end{ytableau}}
		+\overbrace{\frac{q t}{\left(1-\frac{q}{t}\right) \left(1-\frac{t}{q}\right) \left(1-\frac{t^2}{q^2}\right)}}^{\begin{ytableau}2\\1&3&4\end{ytableau}}\\
		+&\overbrace{\frac{(1-q) q t}{\left(1-\frac{q}{t}\right) \left(1-\frac{q^2}{t}\right) \left(1-\frac{t}{q}\right)^2}}^{\begin{ytableau}3&4\\1&2\end{ytableau}}
		+\overbrace{\frac{q^3}{\left(1-\frac{t}{q^3}\right) \left(1-\frac{t}{q^2}\right) \left(1-\frac{t}{q}\right)}}^{\begin{ytableau}4\\1&2&3\end{ytableau}}
		+\overbrace{\frac{q^3}{\left(1-\frac{q^3}{t}\right) \left(1-\frac{t}{q^2}\right) \left(1-\frac{t}{q}\right)}}^{\begin{ytableau}1&2&3&4\end{ytableau}}\\
		+&\overbrace{\frac{qt(1-t)}{\left(1-\frac{q^2}{t^2}\right) \left(1-\frac{q}{t}\right)^2 \left(1-\frac{t^2}{q}\right)}}^{\begin{ytableau}4\\2\\1&3\end{ytableau}}
		+\overbrace{\frac{qt}{\left(1-\frac{q^2}{t^2}\right) \left(1-\frac{q}{t}\right) \left(1-\frac{t}{q}\right)}}^{\begin{ytableau}4\\3\\1&2\end{ytableau}}\\
		+&\overbrace{\frac{qt(1-t)}{\left(1-\frac{q}{t}\right)^2 \left(1-\frac{t}{q}\right) \left(1-\frac{t^2}{q}\right)}}^{\begin{ytableau}2&4\\1&3 \end{ytableau}}
		+\overbrace{\frac{t^3}{\left(1-\frac{q}{t^2}\right) \left(1-\frac{q}{t}\right) \left(1-\frac{q}{t^3}\right)}}^{\begin{ytableau}3\\2\\1&4\end{ytableau}}
		+\overbrace{\frac{t^3}{\left(1-\frac{q}{t^3}\right) \left(1-\frac{q}{t^2}\right) \left(1-\frac{q}{t}\right)}}^{\begin{ytableau}4\\3\\2\\1\end{ytableau}}\\
		=&q^3+q^2 t+qt+q t^2+t^3
	\end{align*}
	\bibliographystyle{amsalpha}
	\bibliography{refs.bib}
	\addresses
\end{document}